\documentclass[11pt]{amsart}
\usepackage[margin=2.5cm]{geometry}
\usepackage{enumerate}
\usepackage{amsmath}
\usepackage{amssymb,latexsym}
\usepackage{amsthm}
\usepackage{color}
\usepackage[dvipsnames]{xcolor}
\usepackage{cancel}
\usepackage{listings}
\usepackage{graphicx}
\usepackage{cite}
\usepackage{changes}
\usepackage{mathdots}
\usepackage{float}

\makeatletter
\renewcommand{\tocsection}[3]{%
  \indentlabel{\@ifnotempty{#2}{\bfseries\ignorespaces#1 #2\quad}}\bfseries#3}
\renewcommand{\tocsubsection}[3]{%
  \indentlabel{\@ifnotempty{#2}{\ignorespaces#1 #2\quad}}#3}

\renewcommand{\l@paragraph}[4]{%
  \@tocline{4}{0pt}{1.5em}%
  {\indentlabel{\@ifnotempty{#1}{\ignorespaces#1\quad}}}%
  {}%
}
\makeatother

\usepackage{etoolbox}
\makeatletter
\patchcmd{\@setaddresses}{\indent}{\noindent}{}{}
\patchcmd{\@setaddresses}{\indent}{\noindent}{}{}
\patchcmd{\@setaddresses}{\indent}{\noindent}{}{}
\patchcmd{\@setaddresses}{\indent}{\noindent}{}{}
\makeatother

\usepackage[linktocpage]{hyperref}
\hypersetup{
  colorlinks   = true, 
  urlcolor     = blue, 
  linkcolor    = Purple, 
  citecolor   = red 
}
\usepackage{comment}
\usepackage{amscd}
\usepackage{mathtools}
\usepackage{soul}

\DeclareMathOperator{\C}{\mathcal{C}}

\DeclareMathOperator{\GL}{GL}

\DeclareMathOperator{\Sym}{Sym}
\usepackage{cleveref}

\theoremstyle{definition}
\newtheorem{theorem}{Theorem}[section]

\newtheorem{lemma}[theorem]{Lemma}
\newtheorem{corollary}[theorem]{Corollary}
\newtheorem{definition}[theorem]{Definition}
\newtheorem{proposition}[theorem]{Proposition}

\newtheorem{example}[theorem]{Example}

\newtheorem{remark}[theorem]{Remark}

\newcommand{\mc}[1]{\mathcal{#1}}

\newcommand{\F}{{\mathbb F}}

\newcommand{\Z}{{\mathbb Z}}

\newcommand{\w}{{\mathbf w}}

\newcommand{\fq}{{\mathbb F}_{q}}

\newcommand{\Fm}{{\mathbb F}_{q^m}}

\newcommand{\la}{\langle}
\newcommand{\ra}{\rangle}

\newcommand{\N}{\mathrm{N}}

\newcommand{\fqm}{\mathbb{F}_{q^m}}

\DeclareMathOperator{\im}{Im}

\DeclareMathOperator{\stab}{stab}

\DeclareMathOperator{\cyclic}{C}

\usepackage{comment}

\title{On the number of inequivalent linearized Reed-Solomon codes}
\date{}

\author[J. Mannaert]{Jonathan Mannaert}
\address{Jonathan Mannaert, \textnormal{Department of Mathematics and Data Science, Vrije Universiteit Brussel, Pleinlaan 2, 1050 Brussel, Belgium}}
\email{jonathan.mannaert@vub.be}

\author[M. Messia]{Marta Messia}
\address{Marta Messia, \textnormal{Department of Mathematics and Data Science, Vrije Universiteit Brussel, Pleinlaan 2, 1050 Brussel, Belgium}}
\email{marta.messia@vub.be}

\author[F. Zullo]{Ferdinando Zullo}
\address{Ferdinando Zullo, \textnormal{Dipartimento di Matematica e Fisica, Universit\`a degli Studi della Campania ``Luigi Vanvitelli'', Viale Lincoln, 5, I--\,81100 Caserta, Italy}}
\email{ferdinando.zullo@unicampania.it}

\subjclass[2020]{51E20; 94B05; 94B27} 
\keywords{Linear set; club; rank-metric code}
\begin{document}

\begin{abstract}
Linearized Reed-Solomon (LRS) codes form an important family of maximum sum-rank distance (MSRD) codes that generalize both Reed--Solomon codes and Gabidulin codes. 
In this paper we study the equivalence problem for LRS codes and determine the number of inequivalent codes within this family. 
Using the correspondence between sum-rank metric codes and systems of $\F_q$-subspaces, we analyze the stabilizer of the Gabidulin system and derive a characterization of equivalence between LRS codes. 
In particular, we prove that two LRS codes are equivalent if and only if the sets of norms that define the codes coincide up to multiplication by an element of $\F_q^\ast$. 
This description allows us to reduce the classification problem to the action of $\F_q^\ast$ on subsets of $\F_q^\ast$. 
As a consequence, we derive formulas for the number of inequivalent linearized Reed-Solomon codes and illustrate the results with explicit examples.
\end{abstract}

\maketitle

\tableofcontents

\section{Introduction}

Error-correcting codes endowed with the rank metric have attracted considerable attention in the last decades due to their rich algebraic structure and relevance in several applications. 
In the rank metric, the distance between two vectors over a finite field extension is defined as the rank of their difference when viewed over the base field. 
Rank-metric codes were introduced independently by Delsarte \cite{de78}, Gabidulin \cite{ga85a}, and Roth \cite{roth1991maximum}, and have since found applications in areas such as network coding, distributed storage, cryptography, and space–time coding; \cite{bartz2022rank}.

A central role in the theory of rank-metric codes is played by Maximum Rank Distance (MRD) codes, which attain the Singleton-like bound for the rank metric. 
Among these, Gabidulin codes form one of the most important and well-understood families. 
These codes can be described as evaluation codes of linearized polynomials over $\F_{q^m}$ evaluated at elements that are linearly independent over $\F_q$. 
Their strong algebraic structure makes them a natural analogue of Reed–Solomon codes in the rank-metric setting.

More recently, the sum-rank metric has emerged as a natural generalization of both the Hamming and the rank metric, \cite{byrne2021fundamental,martinez2022codes,martinez2019universal}.  
In this metric, vectors are partitioned into blocks and the weight is defined as the sum of the ranks of the blocks. 
Sum-rank metric codes unify several important coding-theoretic models and have proven particularly useful in multishot network coding and convolutional coding. 
Analogously to the rank-metric case, the maximum Sum-Rank Distance (MSRD) codes are those that achieve the corresponding Singleton-type bound.

A prominent family of MSRD codes is given by the \emph{linearized Reed--Solomon (LRS) codes}, introduced by Martínez-Peñas \cite{Martinez2018skew}. 
These codes can be viewed as a natural generalization of both Reed–Solomon codes and Gabidulin codes. 
Indeed, Reed–Solomon codes arise when each block has length one, while Gabidulin codes correspond to the case of a single block. 
From the algebraic perspective, LRS codes can be constructed as evaluation codes of linearized polynomials in a skew-polynomial framework, and they inherit many structural properties from both classical families.

Despite their importance, several structural aspects of linearized Reed–Solomon codes remain poorly understood. 
In particular, the problem of determining when two such codes are equivalent under the natural group of sum-rank isometries has received limited attention. 
Understanding equivalence classes is a fundamental problem in coding theory, as it allows one to classify codes up to the natural symmetries of the metric and to determine the number of essentially different codes with given parameters.

In this paper, we study the equivalence problem for linearized Reed–Solomon codes and determine the number of inequivalent codes within this family. 
Our approach is geometric and relies on the correspondence between sum-rank metric codes and systems of $\F_q$-subspaces. 
Using this perspective, we analyze the stabilizer of the Gabidulin system and characterize when two linearized Reed–Solomon systems are equivalent.

More precisely, we prove that two linearized Reed–Solomon codes are equivalent if and only if the sets of norms associated with their defining parameters coincide up to multiplication by an element of $\F_q^\ast$. 
This characterization allows us to reduce the equivalence problem to the study of the action of the multiplicative group $\F_q^\ast$ on subsets of $\F_q^\ast$. 
Using this description, we derive formulas for the number of inequivalent linearized Reed–Solomon codes and provide explicit counting results.
These results extend the work of Schmidt and Zhou \cite{schmidt2018number}, who determined the number of inequivalent Gabidulin codes in the square case.  The family of linearized Reed–Solomon codes contains many inequivalent codes. Understanding and counting these inequivalent classes is therefore a natural problem.

The paper is organized as follows. 
In Section~\ref{sec:2} we review the necessary background on rank-metric codes, linearized polynomials, and Gabidulin codes. 
In Section~\ref{sec:3} we introduce sum-rank metric codes and linearized Reed–Solomon codes. 
Section~\ref{sec:4} is devoted to the study of the stabilizer of the Gabidulin system. 
In Section~\ref{sec:5} we characterize equivalence classes of linearized Reed–Solomon codes. 
In Section~\ref{sec:6} we derive formulas for counting inequivalent linearized Reed–Solomon codes and present explicit examples.
Finally, in Section~\ref{sec:7} we discuss about possible open problems.

\section{Rank-metric codes and Gabidulin codes}\label{sec:2}
In this section, we introduce the notation for rank-metric codes. These codes can be introduced in a variety of perspectives. First we consider them from the traditional point of view, thereafter we shift towards the vector, and other geometric frameworks. Here we explicitly focus on the linearized polynomial framework, $q$-systems and Gabidulin codes, as they will be useful later on.

Rank-metric codes are error-correcting codes in which the distance between codewords is measured using the rank metric, defined as the rank of the difference between two matrices over a finite field. Introduced in the work of 
Delsarte \cite{de78}, Gabidulin \cite{ga85a}, and Roth \cite{roth1991maximum}, these codes have attracted significant attention due to their strong algebraic structure and their ability to correct errors that affect data in a correlated or structured way. Rank-metric codes arise naturally in scenarios where information is represented in matrix form, such as network coding, distributed storage, and space-time coding; see \cite{bartz2022rank,gorla2021rank}.
Here, we will describe them in the vector framework.

\subsection{The vector framework}
For every $m,n\in{\mathbb{N}}$, denote by $\mathbb{F}_{q^m}^n$ the vector space over $\mathbb{F}_{q^m}$ of dimension $n$. Here we introduce the rank weight, let $x=(x_1,\ldots,x_n)\in\mathbb{F}_{q^m}^n$ we define its rank weight as follows\begin{equation*}
 \w_R(x)=\dim_{\mathbb{F}_q}(\langle x_1,\ldots,x_n\rangle _{\mathbb{F}_q})   
\end{equation*} 
and the rank distance of $x,y\in\mathbb{F}_{q^m}^n$ as $d_R(x,y)=\w_R(x-y)$. 

From this we can introduce the following definition.

\begin{definition}
A \textbf{rank-metric code} is a $k$-dimensional $\mathbb{F}_{q^m}$-subspace of $\mathbb{F}_{q^m}^n$, and we will write the parameters of such a code by $[n,k]_{q^m/q}$ or by $[n,k,d]_{q^m/q}$ if $d$ is its minimum distance, that is
\begin{equation*}
    d=\min\{d_R(x,y)\,:\,x,y\in\mc{C},x\neq y\}.
\end{equation*}
\end{definition}

The linear isometries of $(\mathbb{F}_{q^m}^n,d_R)$ are fully characterized (see e.g. \cite{berger2003isometries}).

\begin{theorem}
    Let $\phi:\mathbb{F}_{q^m}^n\rightarrow\mathbb{F}_{q^m}^n$ be an $\mathbb{F}_{q^m}$-linear isometry of $(\mathbb{F}_{q^m}^n,d_R)$, then there exist $\alpha\in\mathbb{F}_{q^m}^*$ and $A\in\GL(n,q)$ such that $\phi(x)=\alpha xA$ for every $x\in\mathbb{F}_{q^m}^n$.
\end{theorem}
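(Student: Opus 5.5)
The plan is to study the weight-one vectors. Since $\phi$ is an $\mathbb{F}_{q^m}$-linear isometry, it preserves rank weight, $\w_R(\phi(x))=\w_R(x)$. The vectors of rank weight $1$ are exactly the nonzero vectors of the form $\lambda v$ with $\lambda\in\mathbb{F}_{q^m}^*$ and $v\in\mathbb{F}_q^n\setminus\{0\}$: all coordinates lie in a one-dimensional $\mathbb{F}_q$-space $\langle\lambda\rangle_{\mathbb{F}_q}$. So $\phi$ permutes this set. By $\mathbb{F}_{q^m}$-linearity, $\phi$ therefore maps each one-dimensional $\mathbb{F}_{q^m}$-subspace $\langle v\rangle_{\mathbb{F}_{q^m}}$ with $v\in\mathbb{F}_q^n$ onto another such subspace.

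\textbf{Step 1.} Set $e_1,\dots,e_n$ to be the standard basis. Write $\phi(e_i)=\alpha_i u_i$ with $\alpha_i\in\mathbb{F}_{q^m}^*$ and $u_i\in\mathbb{F}_q^n$. We may choose a nonzero coordinate of $u_i$ to equal $1$; this normalization is harmless, since $u_i$ is only determined up to $\mathbb{F}_q^*$.

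\textbf{Step 2.} For any $a\in\mathbb{F}_q^n$, the vector $\sum_i a_ie_i$ has weight $\le1$, so its image
\[
\sum_i a_i\alpha_iu_i
\]
must again be an $\mathbb{F}_{q^m}$-multiple of a vector of $\mathbb{F}_q^n$. Apply this first to $e_i+e_j$. The image $\alpha_iu_i+\alpha_ju_j$ must be a scalar multiple of an $\mathbb{F}_q$-rational vector. The $u_i$ are $\mathbb{F}_{q^m}$-linearly independent, because $\phi$ is bijective, and hence also $\mathbb{F}_q$-independent. Comparing coordinates in a suitable position then forces $\alpha_j/\alpha_i\in\mathbb{F}_q^*$.

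To make this precise, I would use the following characterization: a vector $w$ has rank weight at most $1$ iff all ratios of its nonzero coordinates lie in $\mathbb{F}_q$, i.e.\ $w\in\mathbb{F}_{q^m}\cdot\mathbb{F}_q^n$. The obstacle is extracting $\alpha_j/\alpha_i\in\mathbb{F}_q$ when $u_i,u_j$ are independent over $\mathbb{F}_q$. Pick an index $k$ where $(u_i)_k\ne0$, and an index $l$ where the $2\times2$ minor of $u_i,u_j$ at positions $k,l$ is nonzero; such $k,l$ exist by independence. The ratio of coordinates $l$ and $k$ of $\alpha_iu_i+\alpha_ju_j$ lies in $\mathbb{F}_q$. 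Writing $t=\alpha_j/\alpha_i$, this ratio is the Möbius expression $(a+bt)/(c+dt)$ with $a,b,c,d\in\mathbb{F}_q$ and $ad-bc\neq0$. Inverting it shows $t\in\mathbb{F}_q$, unless the denominator vanishes. In that degenerate case I swap the roles of $k$ and $l$, or use coordinate $k$ alone, which gives $t\in\mathbb{F}_q$ directly. This case analysis is the main technical step.

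\textbf{Step 3.} Set $\alpha=\alpha_1$. Then $\alpha_i=\alpha c_i$ with $c_i\in\mathbb{F}_q^*$. Let $A$ be the matrix whose $i$-th row is $c_iu_i$. Then $A\in\GL(n,q)$, since its rows are nonzero multiples of independent vectors. By $\mathbb{F}_{q^m}$-linearity,
\[
\phi(x)=\sum_i x_i\phi(e_i)=\alpha\, xA .
\]
Note that only $\mathbb{F}_{q^m}$-linearity and the preservation of weight-one vectors are used, so the full isometry hypothesis is stronger than needed.
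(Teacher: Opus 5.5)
Your proof is correct. The paper does not prove this statement; it quotes it as known, with a reference to Berger \cite{berger2003isometries}. Your argument therefore gives a self-contained proof of a result the paper only cites. Its steps are:
\begin{enumerate}[(i)]
\item write each image of a standard basis vector as $\phi(e_i)=\alpha_i u_i$ with $u_i\in\mathbb{F}_q^n$;
\item use the weight-one vectors $e_i+e_j$ to force $\alpha_j/\alpha_i\in\mathbb{F}_q^*$;
\item assemble $A\in\GL(n,q)$ from the rows $c_iu_i$.
\end{enumerate}
This gives exactly $\phi(x)=\alpha xA$, and nothing in it depends on how $n$ compares with $m$.

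\textbf{A shorter Step 2.} What you call the main technical step needs no M\"obius inversion or case analysis. Suppose $t=\alpha_j/\alpha_i\notin\mathbb{F}_q$. Then $(a,b)\mapsto a+bt$ is an injective $\mathbb{F}_q$-linear map $\mathbb{F}_q^2\to\mathbb{F}_{q^m}$. The coordinates of $u_i+tu_j$ are the images under this map of the columns of the $2\times n$ matrix with rows $u_i,u_j$. That matrix has rank $2$, so $\w_R(u_i+tu_j)=2$, a contradiction.

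Your case analysis is nonetheless valid. The condition $ad-bc\neq 0$ guarantees $rd-b\neq 0$ when you invert. In the degenerate case, $(u_j)_k\neq 0$ is forced by $(u_i)_k\neq 0$, so $t=-(u_i)_k/(u_j)_k\in\mathbb{F}_q$.

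\textbf{The closing remark.} You used bijectivity of $\phi$ to get the $u_i$ independent. So your remark is accurate only with the reading ``$\phi$ maps the weight-one vectors \emph{onto} themselves'', i.e.\ ``permutes'' as in your first paragraph. Under that reading surjectivity, and hence bijectivity, follows because the weight-one vectors span $\mathbb{F}_{q^m}^n$.

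Merely mapping weight-one vectors \emph{into} weight-one vectors is not enough. Take $n=m=2$ and $\omega\in\mathbb{F}_{q^2}\setminus\mathbb{F}_q$. The map $x\mapsto(x_1+\omega x_2,0)$ sends every weight-one vector to a weight-one vector, but it is not invertible.
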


As a consequence, we give the definition of equivalent codes.

\begin{definition}
    Let $\mc{C},\mc{D}\subseteq\mathbb{F}_{q^m}^n$ be two $\mathbb{F}_{q^m}$-linear and $k$-dimensional codes. The codes $\mc{C},\mc{D}$ are said to be \textbf{equivalent} if there exists an $\mathbb{F}_{q^m}$-linear isometry $\phi$ of $(\mathbb{F}_{q^m}^n,d_R)$ such that $\phi(\mc{C})=\mc{D}$.
\end{definition}

Most of the codes we will consider are \textbf{non-degenerate}, i.e. those for which the columns of any generator matrix of $\C$ are $\fq$-linearly independent; see \cite{alfarano2022linear}.

Similarly as for the Hamming metric, one can prove a Singleton-like bound for rank-metric codes. 

\begin{theorem}(see \cite{de78}) \label{th:singletonrank}
    Let $\C$ be an $[n,k,d]_{q^m/q}$ code.
Then 
\begin{equation}\label{eq:boundgen}
mk \leq \max\{m,n\}(\min\{m,n\}-d+1).\end{equation}
\end{theorem}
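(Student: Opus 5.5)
The plan is to pass to the matrix representation, prove a Singleton-type bound there by puncturing, and then read off the inequality.

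\textbf{Matrix representation.} Fix an $\fq$-basis $\{\gamma_1,\ldots,\gamma_m\}$ of $\Fm$ over $\fq$. Expanding each coordinate $x_i$ of $x\in\Fm^n$ in this basis gives an $m\times n$ matrix $M(x)$ over $\fq$. Its column space is spanned by the coordinate vectors of $x_1,\ldots,x_n$, so $\w_R(x)=\operatorname{rk}M(x)$. The map $x\mapsto M(x)$ is $\fq$-linear and injective. Hence $\C$ becomes an $\fq$-subspace $\mathcal{M}\subseteq\fq^{m\times n}$ of $\fq$-dimension $mk$, in which every nonzero element has rank at least $d$.

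\textbf{Puncturing.} Suppose first that $n\le m$. Consider the $\fq$-linear map $\pi:\mathcal{M}\to\fq^{m\times(n-d+1)}$ that deletes the last $d-1$ columns. Take $A\in\ker\pi$ with $A\neq 0$. Then $A$ has at most $d-1$ nonzero columns, so $\operatorname{rk}A\le d-1<d$, a contradiction. Therefore $\pi$ is injective, and
\[
mk=\dim_{\fq}\mathcal{M}\le m(n-d+1)=\max\{m,n\}(\min\{m,n\}-d+1).
\]

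If instead $n>m$, run the same argument on rows. Delete the last $d-1$ rows; a nonzero matrix in the kernel has at most $d-1$ nonzero rows, hence rank at most $d-1$. The map is therefore injective again, giving $mk\le n(m-d+1)$, which is the bound in this case.

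\textbf{Degenerate case.} Since $\operatorname{rk}$ is at most $\min\{m,n\}$, we have $d\le\min\{m,n\}$ whenever $\C\neq 0$, so the target spaces above are well defined. The case $\C=0$ is trivial.

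There is no real obstacle here. The only step needing care is the choice of whether to delete columns or rows, which is made according to which of $m$ and $n$ is larger, so that the bound comes out with $\max\{m,n\}$ as the factor.
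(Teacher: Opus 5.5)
Your proof is correct. The paper gives no proof of this bound: it only quotes it from \cite{de78} in order to define MRD codes, so there is no in-paper argument to compare against. What you wrote is the standard self-contained proof, and each ingredient checks out.
\begin{itemize}
\item $\w_R(x)=\operatorname{rk}M(x)$, because the coordinate map $\Fm\to\fq^m$ is an $\fq$-isomorphism.
\item $\dim_{\fq}\mathcal{M}=mk$.
\item A nonzero matrix supported on at most $d-1$ columns (or rows) has rank at most $d-1$, so the deletion map is injective on $\mathcal{M}$.
\item $1\le d\le\min\{m,n\}$, so the deletion maps are well defined.
\end{itemize}
Deleting columns when $n\le m$ and rows when $n>m$ is exactly what produces the factor $\max\{m,n\}$.

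Two small remarks. First, for $n\le m$ you could stay over $\Fm$. Since $\w_R(x)\le\w_H(x)$ (the Hamming weight), the classical Singleton bound gives $d\le n-k+1$ at once. The passage to $\fq$-dimensions therefore only matters for $n>m$. There the row-deletion map is $\fq$-linear but not $\Fm$-linear, and the resulting bound is stronger than the Hamming one. Second, replace the kernel argument by the observation that two codewords with the same image would differ in at most $d-1$ columns (rows). The same proof then gives $|\C|\le q^{\max\{m,n\}(\min\{m,n\}-d+1)}$ for arbitrary, not necessarily linear, rank-metric codes.
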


An $[n,k,d]_{q^m/q}$ code is called \textbf{Maximum Rank Distance code} (or in a short form \textbf{MRD code}) if its parameters reach the bound \eqref{eq:boundgen}.

\subsection{Linearized polynomials}

Let $s\in\{1,\ldots,m\}$ be such that $(s,m)=1$, a \textbf{linearized polynomial} (or \textbf{$q^s$-polynomial}) is a polynomial of the form 
\begin{equation*}  
L(x)=\sum_{i=0}^t\alpha_ix^{q^{si}}\in\mathbb{F}_{q^m}[x].
\end{equation*}
Any linearized polynomial $L(x)$, can be considered as a linear map over $\mathbb{F}_{q^m}$. Such interpretation results into the introduction of $\ker(L)$ and and $\im(L)$, which are both $\mathbb{F}_q$-subspaces. Here we have $\dim_{\mathbb{F}_q}(\ker(L))+\dim_{\mathbb{F}_q}(\im(L))=m$. 
In this context, we can introduce the notion of $q^s$-\textbf{degree} of a linearized polynomial 
\begin{equation*}
    \deg_{q^s}(L)=\max\{i\in\{1,\ldots,m\}\,:\,x^{q^{si}} \text{ is a monomial of }L\}.
\end{equation*}
As classically for polynomials, the degree bounds the number of roots of the polynomial in the following way (see e.g. \cite[Theorem~5]{Galois_extensions_and_subspaces_of_alternating_bilinear_forms_with_special_rank_properties}).

\begin{theorem}\label{thm:boundnumbroots}
   Consider $s \in\{1,\ldots,m\}$ with $\gcd(s,m)=1$.
   Let $L(x)=\sum_{i=0}^t\alpha_ix^{q^{si}}\in\mathbb{F}_{q^m}[x]$ be a nonzero $q^s$-polynomial and let $t$ be its $q^s$-degree. Then $\dim_{\fq}(\ker(L))\leq t$.
\end{theorem}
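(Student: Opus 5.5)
The plan is to argue by induction on the $q^s$-degree $t$, peeling off one root at a time with a Euclidean-type division in the skew (composition) ring of $q^s$-polynomials. The key observation is that the Frobenius $\sigma:x\mapsto x^{q^s}$ generates $\mathrm{Gal}(\mathbb{F}_{q^m}/\mathbb{F}_q)$, because $\gcd(s,m)=1$. Consequently the fixed field of $\sigma$ is exactly $\mathbb{F}_q$, and $\ker(L)$ is an $\mathbb{F}_q$-subspace of $\mathbb{F}_{q^m}$. I will use the convention that $t$ is the largest index with $\alpha_t\neq 0$, with $t=0$ allowed.

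\emph{Base case.} If $t=0$, then $L(x)=\alpha_0x$ with $\alpha_0\neq0$, so $\ker(L)=\{0\}$ and its dimension is $0=t$.

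\emph{Inductive step.} Let $t\geq1$ and assume $\ker(L)\neq\{0\}$; otherwise there is nothing to prove. Pick a nonzero $\beta\in\ker(L)$ and set
\[
M(x)=x^{q^s}-\beta^{q^s-1}x .
\]
This is a $q^s$-polynomial of $q^s$-degree $1$. Its kernel is $\{x : (x/\beta)^{q^s}=x/\beta\}=\beta\,\mathrm{Fix}(\sigma)=\beta\mathbb{F}_q$, a one-dimensional $\mathbb{F}_q$-space.

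Next I use right division in the skew ring $\mathbb{F}_{q^m}[\sigma]$. Leading coefficients of compositions multiply after twisting by powers of $\sigma$, so one can write $L=N\circ M+R$, where $N$ has $q^s$-degree $t-1$ and $R=r x$ has $q^s$-degree $0$. Evaluating at $\beta$ gives $0=L(\beta)=N(M(\beta))+r\beta=r\beta$, so $r=0$ and $L=N\circ M$.

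It follows that $\ker(L)=M^{-1}(\ker N\cap\operatorname{Im}M)$. This gives
\[
\dim\ker L\leq \dim\ker M+\dim\ker N\leq 1+(t-1)=t,
\]
using the induction hypothesis for $N$ (which is nonzero since $L\neq0$). This closes the induction.

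The step I expect to be the main obstacle is the division algorithm. The leading coefficient of $N\circ M$ is $n_{t-1}\cdot 1^{q^{s(t-1)}}$, so dividing by the monic $M$ is always possible, and a standard descending-degree argument provides $N$. One must also be careful that the polynomials are reduced modulo $x^{q^{sm}}-x$ only when degrees exceed $m$. If $t\geq m$, however, the bound is trivial since $\dim\ker L\leq m$. So I assume $t<m$, where all the degrees involved stay below $m$ and the identification of polynomials with maps causes no ambiguity.
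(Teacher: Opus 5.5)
Your proof is correct. The paper does not prove this statement itself; it quotes it from Gow--Quinlan.

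\textbf{Comparison with the standard argument.} The usual proof of this bound is linear-algebraic. If $\dim_{\F_q}\ker(L)\geq t+1$, choose $\F_q$-independent $u_1,\dots,u_{t+1}\in\ker(L)$. The Moore matrix $\bigl(u_j^{q^{si}}\bigr)$ is nonsingular because the fixed field of $x\mapsto x^{q^s}$ is exactly $\F_q$, and this forces $\alpha_0=\dots=\alpha_t=0$.

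Your route is instead the skew analogue of the factor theorem. A nonzero root $\beta$ produces the right factor $M=x^{q^s}-\beta^{q^s-1}x$. Right division in $\F_{q^m}[Y;q^s]$ gives $L=N\circ M$. Then $\ker(L)=M^{-1}(\ker N)$, so each step costs at most $\dim\ker M=1$.

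Both arguments use $\gcd(s,m)=1$ at the same point:
\begin{itemize}
\item in yours, to get $\ker M=\beta\F_q$ rather than $\beta\F_{q^{\gcd(s,m)}}$;
\item in the Moore-matrix argument, to make the matrix nonsingular.
\end{itemize}

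\textbf{What each buys.} The Moore-matrix proof is shorter once nonsingularity of Moore matrices is available. That lemma, however, is itself typically proved by an induction much like yours, or via independence of characters. Your proof is fully self-contained. It also records the structural fact that roots yield degree-one right factors.

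\textbf{A minor remark.} Your caution about reducing modulo $x^{q^{sm}}-x$ is unnecessary. Composition of formal polynomials in $\F_{q^m}[x]$ is compatible with evaluation. So the identity $L=N\circ M$ of formal $q^s$-polynomials gives $\ker(L)=M^{-1}(\ker N)$ for every $t$, and the induction runs without restriction. Restricting to $t<m$ is harmless but not needed.
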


Denote by $\mc{L}_{m,q^s}$ the set of $q^s$-polynomials over $\mathbb{F}_{q^m}$ and note that $(\mc{L}_{m,q^s},+,\circ,\cdot)$ is an $\mathbb{F}_q$-algebra, where $\circ$ is defined between two linearized polynomials as the composition of the associated functions.
Since $\{(x^{q^{sm}}-x)\circ f(x)\,:\,f\in\mc{L}_{m,q^s}\}$ is a two-sided ideal of $\mc{L}_{m,q^s}$, then the quotient
\begin{equation*}
\tilde{\mc{L}}_{m,q^s}=\mc{L}_{m,q^s}/(x^{q^{sm}}-x)
\end{equation*}
is again an $\mathbb{F}_q$-algebra which turns out to be isomorphic to the $\mathbb{F}_q$-algebra of the $\fq$-linear endomorphisms of $\fqm$.

Let $L(x)=a_0x+\ldots+a_{m-1}x^{q^{s(m-1)}}\in\tilde{\mc{L}}_{m,q^s}$, the matrix
\begin{equation*}
    D_{L}:=\begin{pmatrix}
        a_0&a_1&\ldots&a_{m-1}\\
        a_{m-1}^{q^s}&a_0^{q^s}&\ldots&a_{m-2}^{q^s}\\
        \vdots&&\ldots&\vdots\\
        a_1^{q^{s(m-1)}}&a_2^{q^{s(m-1)}}&\ldots&a_0^{q^{s(m-1)}}
    \end{pmatrix}
\end{equation*}
is called the \textbf{Dickson matrix} associated with $L$. 
An important reason Dickson matrices are widely used in the literature is $\operatorname{rk}(D_L) = \dim_{\mathbb{F}_q}(\mathrm{Im}(L))$. In particular, this implies that the polynomial $L$ is invertible if and only if $D_L$ is invertible.

For more details on linearized polynomials, we refer to \cite{lidl1997finite,wu2013linearized}.

Gabidulin codes can be introduced naturally in this framework as evaluation codes of linearized polynomials. 
More precisely, they are obtained by evaluating $q^s$-linearized polynomials of bounded $q^s$-degree at elements of $\F_{q^m}$ that are linearly independent over $\F_q$.

\begin{definition}
Let $\boldsymbol{\alpha}=(\alpha_1,\ldots,\alpha_n)$ such that $\alpha_1,\ldots,\alpha_n\in\F_{q^m}$ are $\F_q$-linearly independent. Define
\[
G_{k,s,m}[\boldsymbol{\alpha}]
:=
\{(f(\alpha_1),\ldots,f(\alpha_n)) : f\in\mc{L}_{m,q^s} \text{ s.t. } \deg_{q^s}(f)\leq k-1\}
\subseteq \F_{q^m}^n .
\]
Then $G_{k,s,m}[\boldsymbol{\alpha}]$ is a \textbf{Gabidulin code}.  
\end{definition}

Gabidulin codes are examples of MRD codes.
 
\begin{theorem}\cite{de78}
Let $\boldsymbol{\alpha}=(\alpha_1,\ldots,\alpha_n)$ such that $\alpha_1,\ldots,\alpha_n\in\F_{q^m}$ are $\F_q$-linearly independent and consider $G_{k,s,m}[\boldsymbol{\alpha}]=\{(f(\alpha_1),\ldots,f(\alpha_n)) : f\in\mc{L}_{m,q^s} \text{ s.t. } \deg_{q^s}(f)\leq k-1\}$ with $k\leq m$. Then $G_{k,s,m}[\boldsymbol{\alpha}]$ is an MRD code on $\F_{q^m}^n$.
\end{theorem}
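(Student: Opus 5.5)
The plan is to show that the code has the right dimension and that every nonzero codeword has rank weight at least $n-k+1$. Together with the Singleton-like bound of Theorem~\ref{th:singletonrank}, this gives the MRD property. Throughout I assume $n\le m$, which is forced because $\alpha_1,\ldots,\alpha_n$ are $\F_q$-linearly independent in $\F_{q^m}$. In this case the bound \eqref{eq:boundgen} reads $mk\le m(n-d+1)$, that is, $d\le n-k+1$. So it suffices to prove that $G_{k,s,m}[\boldsymbol{\alpha}]$ has $\F_{q^m}$-dimension $k$ and minimum distance at least $n-k+1$. I also read $k\le n$ as implicit; otherwise replace $k$ by $\min\{k,n\}$.

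The key step is a kernel argument based on Theorem~\ref{thm:boundnumbroots}. Take a nonzero $f\in\mc{L}_{m,q^s}$ with $\deg_{q^s}(f)\le k-1<m$, and consider the codeword $c=(f(\alpha_1),\ldots,f(\alpha_n))$. Let $U=\langle\alpha_1,\ldots,\alpha_n\rangle_{\F_q}$, which has dimension $n$. By $\F_q$-linearity of $f$, the rank weight satisfies $\w_R(c)=\dim_{\F_q} f(U)=n-\dim_{\F_q}(\ker f\cap U)$. Theorem~\ref{thm:boundnumbroots} gives $\dim_{\F_q}\ker f\le k-1$, so $\w_R(c)\ge n-k+1\ge 1$.

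The dimension count follows from the same inequality. The evaluation map $f\mapsto(f(\alpha_i))_i$ is $\F_{q^m}$-linear on the $k$-dimensional space of $q^s$-polynomials of $q^s$-degree at most $k-1$. Since $\w_R(c)\ge 1$ for every nonzero $f$, the kernel of this map is trivial, and the code has dimension exactly $k$.

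Combining these, the minimum distance is $d=n-k+1$ and the Singleton-like bound is attained. The main obstacle is simply making sure Theorem~\ref{thm:boundnumbroots} applies. That theorem needs $f$ to be a genuinely nonzero function, which holds because $k-1<m$. It also needs $\gcd(s,m)=1$ so that the degree bound on the kernel is valid; this is part of the standing hypothesis in the definition of $\mc{L}_{m,q^s}$.
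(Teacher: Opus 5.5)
Your proof is correct and complete. The paper gives no proof of this statement; it only quotes it from Delsarte \cite{de78}. So there is no in-paper argument to compare with, and you are supplying the standard evaluation-code proof:
\begin{itemize}
\item rank--nullity on $U=\langle\alpha_1,\ldots,\alpha_n\rangle_{\F_q}$ together with a kernel bound gives $\w_R(c)\ge n-k+1$;
\item injectivity of evaluation then gives dimension $k$;
\item the Singleton-like bound, with $n\le m$, gives equality.
\end{itemize}

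The point that carries the weight for general $s$ is your use of Theorem~\ref{thm:boundnumbroots}. A $q^s$-polynomial of $q^s$-degree $k-1$ has ordinary degree $q^{s(k-1)}$. Counting roots then only yields $\dim_{\F_q}\ker f\le s(k-1)$, which is too weak. The sharper bound $\dim_{\F_q}\ker f\le k-1$, which relies on $\gcd(s,m)=1$, is exactly what your weight estimate needs. For $s=1$ the ordinary degree would suffice.

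Two minor remarks:
\begin{itemize}
\item Theorem~\ref{thm:boundnumbroots} as stated only requires $f$ to be a nonzero polynomial. Your extra check that $f$ is a nonzero function is harmless but not needed.
\item In the case $k>n$, your ``replace $k$ by $\min\{k,n\}$'' deserves one line of justification. The code $G_{k,s,m}[\boldsymbol{\alpha}]$ contains $G_{n,s,m}[\boldsymbol{\alpha}]$, and your dimension count applied with $k=n$ shows the latter is all of $\F_{q^m}^n$. Hence the code is the whole space, with dimension $n$ and minimum distance $1$, and it meets the bound trivially.
\end{itemize}
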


\subsection{The geometric framework}
 
A key point in the theory of rank-metric codes has been the geometric viewpoint via systems. Indeed, this points out a connection between rank-metric codes and the combinatorics of $\fq$-subspaces in an $\fqm$-vector space. We recall this connection.

\begin{theorem}(see \cite{Randrianarisoa2020ageometric}) \label{th:connection}
Let $\C$ be a non-degenerate $[n,k,d]_{q^m/q}$ code and let $G$ be a generator matrix.
Let $U \subseteq \F_{q^m}^k$ be the $\F_q$-span of the columns of $G$.
The rank weight of an element $x G \in \C$, with $x \in \F_{q^m}^k$ is
\begin{equation}\label{eq:relweight}
\w_R(x G) = n - \dim_{\mathbb{F}_q}(U \cap x^{\perp}),\end{equation}
where $x^{\perp}=\{y \in \F_{q^m}^k \colon x \cdot y=0\}$ and $x \cdot y$ denotes the standard scalar product between $x$ and $y$. 
In particular,
\begin{equation} \label{eq:distancedesign}
d=n - \max\left\{ \dim_{\fq}(U \cap H)  \colon H\mbox{ is an } \F_{q^m}\mbox{-hyperplane of }\F_{q^m}^k  \right\}.
\end{equation}
\end{theorem}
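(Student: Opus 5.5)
The statement to prove is Theorem~\ref{th:connection}. I will work directly with the linear-algebraic description of the codewords.

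\textbf{Setting up the weight.} Write $G=(g_1|\cdots|g_n)$ with columns $g_i\in\F_{q^m}^k$. For $x\in\F_{q^m}^k$ we have
\[
xG=(x\cdot g_1,\ldots,x\cdot g_n).
\]
Consider the $\F_q$-linear map
\[
\psi_x\colon \F_q^n\to\F_{q^m},\qquad (\lambda_1,\ldots,\lambda_n)\mapsto \sum_i\lambda_i\,(x\cdot g_i)=x\cdot\Big(\sum_i\lambda_i g_i\Big).
\]
Its image is the $\F_q$-span of the entries of $xG$. Hence $\w_R(xG)=\dim_{\F_q}\im(\psi_x)=n-\dim_{\F_q}\ker(\psi_x)$.

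\textbf{Identifying the kernel.} The map $\lambda\mapsto\sum_i\lambda_i g_i$ sends $\F_q^n$ onto $U$. It is injective precisely because $\C$ is non-degenerate: the columns $g_i$ are $\F_q$-linearly independent. So it is an $\F_q$-isomorphism $\F_q^n\cong U$, and $\dim_{\F_q}U=n$. Under this isomorphism, $\ker(\psi_x)$ corresponds to $\{u\in U: x\cdot u=0\}=U\cap x^\perp$. Therefore
\[
\w_R(xG)=n-\dim_{\F_q}(U\cap x^\perp),
\]
which is \eqref{eq:relweight}.

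\textbf{Deducing the distance formula.} Since $\C$ is linear, $d$ is the minimum of $\w_R(xG)$ over nonzero codewords. The matrix $G$ has full rank $k$, so the nonzero codewords are exactly $xG$ with $x\neq0$. For $x\neq0$, $x^\perp$ is an $\F_{q^m}$-hyperplane of $\F_{q^m}^k$. Conversely, every hyperplane $H$ equals $x^\perp$ for some nonzero $x$, determined up to an $\F_{q^m}^*$ scalar. Minimizing the weight is therefore the same as maximizing $\dim_{\F_q}(U\cap H)$ over all hyperplanes $H$, which gives \eqref{eq:distancedesign}.

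The only point needing care is the injectivity step, which is exactly where non-degeneracy is used. The rest is bookkeeping, so I do not expect a real obstacle.
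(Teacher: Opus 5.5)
Your proof is correct and is essentially the standard argument behind this result; the paper gives no proof of its own and only cites it. The argument has three steps: rank--nullity for the $\F_q$-linear map $\lambda\mapsto x\cdot(\sum_i\lambda_i g_i)$, non-degeneracy to identify $\F_q^n$ with $U$ so that the kernel becomes $U\cap x^{\perp}$, and the correspondence between nonzero $x$ (up to $\F_{q^m}^*$-scalars) and hyperplanes $x^{\perp}$ to obtain the distance formula. Non-degeneracy is used only in the injectivity step you flag; without it the same argument gives $\w_R(xG)=\dim_{\F_q}(U)-\dim_{\F_q}(U\cap x^{\perp})$.
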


The above theorem allows us to consider the notion of $q$-system.

\begin{definition}
    Let $\mc{U}$ be an $\mathbb{F}_q$-subspace of $\mathbb{F}_{q^m}^k$ such that $\langle \mc{U}\rangle _{\mathbb{F}_{q^m}}=\mathbb{F}_{q^m}^k$. We say that $\mc{U}$ is an $[n,k,d]_{q^m/q}$-\textbf{system} if $n=\dim_{\mathbb{F}_q}(\mc{U})$ and $d=n-\max\{\dim_{\mathbb{F}_q}(\mc{U}\cap H)\,:\, H \text{ is an } \mathbb{F}_q\text{-hyperplane of } \mathbb{F}_{q^m}^k\}$. 

    When not considering parameters explicitly, we will refer to $\mc{U}$ as a $q$-system.
    Moreover, we say that two $[n,k,d]_{q^m/q}$-systems $\mc{U}$ and $\mc{V}$ are \textbf{equivalent} if there exists an $\F_{q^m}$-linear isomorphism $\phi:\mathbb{F}_{q^m}^k\rightarrow\mathbb{F}_{q^m}^k$ such that $\phi(\mc{U})=\mc{V}$. 
\end{definition}

\begin{remark}
Directly from the previous definition we have that two $[n,k,d]_{q^m/q}$-systems $\mc{U}$ and $\mc{V}$ are equivalent if there exists $A\in\GL(k,q^m)$ such that $\mc{V}=\mc{U}A$.
\end{remark}

In \cite{Randrianarisoa2020ageometric}, see also \cite{alfarano2022linear}, the author proved that equivalent codes correspond to equivalent $q$-systems and conversely. This yields the following result.

\begin{theorem}
\label{th:corresp}
 Let $\mathfrak{C}[n,k,d]_{q^m/q}$ be the set of equivalence classes of non-degenerate $[n,k,d]_{q^m/q}$ rank-metric codes and $\mathfrak{U}[n,k,d]_{q^m/q}$ the set of equivalence classes of $[n,k,d]_{q^m/q}$-systems. There exists a one-to-one correspondence between $\mathfrak{C}[n,k,d]_{q^m/q}$ and $\mathfrak{U}[n,k,d]_{q^m/q}$.   
\end{theorem}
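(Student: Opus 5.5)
We will prove the statement by constructing explicit maps in both directions and showing that they are mutually inverse. For one direction, to a non-degenerate $[n,k,d]_{q^m/q}$ code $\C$ with generator matrix $G$ we associate the $\F_q$-span $\mc{U}_G$ of the columns of $G$. For the other direction, to an $[n,k,d]_{q^m/q}$-system $\mc{U}$ we choose an $\F_q$-basis $u_1,\ldots,u_n$ of $\mc{U}$, form the $k\times n$ matrix $G$ whose columns are the $u_i$, and take the code spanned by the rows of $G$.

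The first task is to check that these assignments land in the right sets. Since $\C$ is non-degenerate, the columns of $G$ are $\F_q$-linearly independent, so $\dim_{\F_q}\mc{U}_G=n$. Since $G$ has rank $k$, its columns span $\F_{q^m}^k$ over $\F_{q^m}$. By Theorem~\ref{th:connection}, formula \eqref{eq:distancedesign}, the minimum distance of $\C$ equals $n$ minus the maximal intersection of $\mc{U}_G$ with a hyperplane. (The definition of system should be read with $\F_{q^m}$-hyperplanes, as in Theorem~\ref{th:connection}.) Hence $\mc{U}_G$ is an $[n,k,d]$-system. Conversely, $\langle \mc{U}\rangle_{\F_{q^m}}=\F_{q^m}^k$ forces $\operatorname{rk} G=k$, so the code has dimension $k$. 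It is non-degenerate by the independence of the $u_i$, and \eqref{eq:distancedesign} again gives minimum distance $d$.

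Next we check that both maps are well defined on equivalence classes. A change of generator matrix has the form $G\mapsto MG$ with $M\in\GL(k,q^m)$, which replaces $\mc{U}_G$ by $M\mc{U}_G$, an equivalent system. An isometry $x\mapsto \alpha xA$ with $A\in\GL(n,q)$ sends $G$ to $\alpha GA$. Right multiplication by $A$ permutes $\F_q$-bases of the same column span, so it leaves $\mc{U}_G$ unchanged, and the scalar $\alpha$ acts as $\alpha I_k\in\GL(k,q^m)$. For the other map, a change of $\F_q$-basis of $\mc{U}$ corresponds to $G\mapsto GA$ with $A\in\GL(n,q)$, which is an isometry. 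Replacing $\mc{U}$ by $\phi(\mc{U})$ with $\phi$ given by $M\in\GL(k,q^m)$ replaces $G$ by $MG$, which has the same row space.

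Finally, the two maps are mutually inverse on classes. Starting from $\C$, we obtain $\mc{U}_G$, and choosing the columns of $G$ as its basis recovers $G$ and hence $\C$. Starting from $\mc{U}$, the column span of the constructed $G$ is $\mc{U}$ itself. The main subtle point is injectivity on classes: if $\mc{U}_{G'}=M\mc{U}_G$, we must show that the codes are equivalent. The columns of $MG$ and of $G'$ are then two $\F_q$-bases of the same space, so $G'=MGA$ for some $A\in\GL(n,q)$. The code generated by $G'$ is therefore the image of the code generated by $G$ under the isometry $x\mapsto xA$, which is exactly code equivalence.
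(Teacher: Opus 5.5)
Your proposal is correct. It is essentially the standard argument of \cite{Randrianarisoa2020ageometric} (see also \cite{alfarano2022linear}), which the paper quotes without reproducing: send a generator matrix to the $\F_q$-span of its columns, send an $\F_q$-basis of a system back to the row space of the matrix it forms, and check that changes of generator matrix, changes of basis and the isometries $x\mapsto \alpha xA$ correspond to $\GL(k,q^m)$-equivalence of systems.

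Your remark that the definition of a system must use $\F_{q^m}$-hyperplanes, as in Theorem~\ref{th:connection}, is also right. Your final injectivity paragraph is not a separate issue, since it already follows from the well-definedness of the inverse map.
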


Thanks to Theorem~\ref{th:corresp}, rank-metric codes and $q$-systems can be studied interchangeably. In particular, every rank-nondegenerate $[n,k,d]_{q^m/q}$-code uniquely determines a $[n,k,d]_{q^m/q}$-system up to equivalence, and vice versa. Therefore, in the following we will freely move between these two perspectives, choosing the one that is more convenient for the problem at hand.

Thanks to the above correspondence, although Gabidulin codes were originally introduced as evaluation codes, they can be naturally described in terms of $q$-systems.

\begin{example}
Let $\boldsymbol{\alpha}=(\alpha_1,\ldots,\alpha_n)$ be such that $\alpha_1,\ldots,\alpha_n\in\F_{q^m}$ are $\F_q$-linearly independent and consider $G_{k,s,m}[\boldsymbol{\alpha}]$ a Gabidulin code. Then, the generator matrix of $G_{k,s,m}[\boldsymbol{\alpha}]$ is the Moore matrix
\[
G=
\begin{pmatrix}
\alpha_1 & \ldots & \alpha_n\\
\alpha_1^{q^s} & \ldots & \alpha_n^{q^s}\\
\vdots & & \vdots\\
\alpha_1^{q^{s(k-1)}} & \ldots & \alpha_n^{q^{s(k-1)}}
\end{pmatrix}
\in\F_{q^m}^{k\times n}.
\]

Using the correspondence described in Theorem~\ref{th:corresp}, we can determine the $q$-system associated with $G_{k,s,m}[\boldsymbol{\alpha}]$. Indeed, from the columns of the generator matrix we obtain
\begin{align*}
\mc{U}
&=
\langle (\alpha_1,\alpha_1^{q^s},\ldots,\alpha_1^{q^{s(k-1)}}),\ldots,
(\alpha_n,\alpha_n^{q^s},\ldots,\alpha_n^{q^{s(k-1)}})\rangle_{\F_q} \\
&=
\{(x,x^{q^s},\ldots,x^{q^{s(k-1)}})\;:\;
x\in\langle\alpha_1,\ldots,\alpha_n\rangle_{\F_q}\}.
\end{align*}
\end{example}

\begin{remark}
Let $s\in\{1,\ldots,m\}$ with $(s,m)=1$ and let $S\subseteq\F_{q^m}$ be an $\F_q$-subspace with $\dim_{\F_q}(S)=n$. Then there exist $\F_q$-linearly independent $\alpha_1,\ldots,\alpha_n\in\F_{q^m}$ such that $S=\langle\alpha_1,\ldots,\alpha_n\rangle_{\F_q}$. In this case
\[
G_{k,s,m}[S]
:=
\{(x,x^{q^s},\ldots,x^{q^{s(k-1)}}): x\in S\}
\]
is the $q$-system associated with the Gabidulin code $G_{k,s,m}[\boldsymbol{\alpha}]$.

When $S=\F_{q^m}$, we simply write $G_{k,s,m}:=G_{k,s,m}[\F_{q^m}]$.
We will refer to these $q$-systems as \textbf{Gabidulin systems}.
\end{remark}

For more details, we refer to \cite{sheekeysurvey,neri2026rank}.

\section{Sum-rank metric codes and linearized Reed-Solomon codes}\label{sec:3}

\subsection{Sum-rank metric codes}

Let $m,n_1,\ldots,n_t\in\mathbb{N}$ and denote by $\F_{q^m}^{n_i}$ the vector space over $\F_{q^m}$ of dimension $n_i$. Moreover, we will write $\mathbf{n}=(n_1,\ldots,n_t)$, $N=n_1+\ldots+n_t$ and
\begin{equation*}
    \F_{q^m}^{\mathbf{n}}=\bigoplus_{i=1}^t\F_{q^m}^{n_i}.
\end{equation*}
For a tuple of vectors $x=(x_1,\ldots,x_t)\in\F_{q^m}^{\mathbf{n}}$ we can define the sum-rank weight of $x$ as
\begin{equation*}
    \w_{SR}(x)=\sum_{i=1}^t\w_R(x_i)
\end{equation*}
and the sum-rank distance can be defined, for $x,y\in\F_{q^m}^{\mathbf{n}}$, as $d_{SR}(x,y)=\w_{SR}(x-y)$. With this notation of distance we can introduce the following definition.
\begin{definition}
    Let $k$ be a positive integer with $1 \le k \le N$. An $[\mathbf{n},k,d]_{q^m/q}$ \textbf{sum-rank metric code} $\C$ is a $k$-dimensional $\F_{q^m}$-subspace of $\F_{q^m}^\mathbf{n}$ endowed with the sum-rank metric. 
The minimum sum-rank distance of $\C$ is the integer
\[
d(\C)=\min\{d_{SR}(x,y) \,:\, x, y \in \C, x\neq y  \}.
\]
If the minimum distance is not relevant, we will write that $\C$ is an $[\mathbf{n},k]_{q^m/q}$ code.
\end{definition}

The isometries of $(\F_{q^m}^{\mathbf{n}},d_{SR})$ are also fully characterized (see \cite[Theorem 3.7]{alfarano2021sum} and \cite[Theorem 2]{martinezpenas2021hamming}.).

\begin{theorem}
    Let $\phi:\F_{q^m}^{\mathbf{n}}\rightarrow\F_{q^m}^{\mathbf{n}}$ be an $\F_{q^m}$-linear isometry of $(\F_{q^m}^{\mathbf{n}},d_{SR})$, then there exist $\gamma_1,\ldots,\gamma_t\in\F_{q^m}^*$, $A_i\in\GL(n_i,q)$ for every $i\in\{1,\ldots,t\}$, and $\sigma\in\Sym(t)$ such that $\phi((x_1,\ldots,x_t))=(\gamma_1A_1x_{\sigma(1)},\ldots,\gamma_tA_tx_{\sigma(t)})$, for every $(x_1,\ldots,x_t) \in \F_{q^m}^{\mathbf{n}}$.
\end{theorem}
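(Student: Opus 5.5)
The plan is to reduce the sum-rank isometry problem to the rank-metric case on each block. Let $\phi$ be an $\F_{q^m}$-linear isometry of $(\F_{q^m}^{\mathbf{n}},d_{SR})$. Throughout we use linearity: $d_{SR}(\phi(x),\phi(y))=\w_{SR}(\phi(x-y))$, so $\phi$ preserves $\w_{SR}$. For each $i$ let $E_i=\{0\}\oplus\cdots\oplus\F_{q^m}^{n_i}\oplus\cdots\oplus\{0\}$ denote the $i$-th block.

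\textbf{Step 1: rigid substructures.} The key step is to characterize the blocks intrinsically. Call a vector $x$ \emph{rank-one-type} if $\w_{SR}(x)=1$; then $x$ has exactly one nonzero block $x_i$, and $x_i=\lambda v$ with $\lambda\in\F_{q^m}^*$, $v\in\F_q^{n_i}$. Two weight-one vectors $x,y$ that are $\F_{q^m}$-independent lie in the same block if and only if there is some $\mu\in\F_{q^m}$ with $\w_{SR}(x+\mu y)\le 1$ after suitable scaling. A cleaner criterion is the following: they lie in the same block if and only if every $\F_{q^m}$-combination $ax+by$ has weight at most $2$ \emph{and} some nonzero combination has weight $1$. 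If $x,y$ lie in different blocks, every combination with $a,b\neq0$ has weight exactly $2$. If they lie in the same block with $v,w$ $\F_q$-independent, then $ax+by$ has weight $1$ exactly when $a\lambda/(b\lambda')\in\F_q$ up to scaling.

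The distinguishing invariant is therefore the number of weight-one points in $\langle x,y\rangle_{\F_{q^m}}$. This number is isometry-invariant. Counting projectively, different blocks give exactly $2$ weight-one points, while the same block gives $q+1$ points (the $\F_q$-line spanned by $v,w$). Hence $\phi$ permutes the sets of weight-one vectors block-by-block. Because the $\F_{q^m}$-span of the weight-one vectors in block $i$ is $E_i$, we get $\phi(E_i)=E_{\tau(i)}$ for a permutation $\tau$ with $n_{\tau(i)}=n_i$. Setting $\sigma=\tau^{-1}$ produces the stated shape $\phi(x)_j=\psi_j(x_{\sigma(j)})$.

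\textbf{Step 2: each block map is a rank isometry.} Each $\psi_j:\F_{q^m}^{n_{\sigma(j)}}\to\F_{q^m}^{n_j}$ is $\F_{q^m}$-linear. Restricting $\w_{SR}$ to a single block gives $\w_R$, so $\psi_j$ is a linear rank-metric isometry. By the Theorem of Section~\ref{sec:2} (Berger) it has the form $\gamma_jA_j$ with $\gamma_j\in\F_{q^m}^*$ and $A_j\in\GL(n_j,q)$; the side on which $A_j$ acts is just a transpose convention. Combining this with Step 1 gives the formula.

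\textbf{Expected obstacle.} The main obstacle is making Step 1 rigorous, specifically verifying the point count. Let $v,w$ be $\F_{q^m}$-independent in $\F_{q^m}^{n}$ with $\F_q$-rational directions. The weight-one vectors in their span are then exactly the $\F_{q^m}$-multiples of $\F_q$-rational vectors. A vector $av+bw$ with $v,w\in\F_q^n$ independent is a multiple of a rational vector if and only if $a/b\in\F_q\cup\{\infty\}$, which gives $q+1$ projective points.

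We also need $q+1>2$, which always holds. We must handle the degenerate case $n_i=1$, where no two independent weight-one vectors exist inside one block. In that case, blocks with $n_i=1$ are exactly those whose weight-one vectors have no partner of the same block. The invariant "count of weight-one points on the span" then still partitions the weight-one vectors into classes, and the classes are exactly the blocks. This completes the argument.
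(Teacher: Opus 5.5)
The paper does not prove this statement; it quotes it from \cite[Theorem 3.7]{alfarano2021sum} and \cite[Theorem 2]{martinezpenas2021hamming}. Your proposal is a correct, self-contained alternative that reduces the sum-rank statement to Berger's rank-metric theorem.

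You characterize the blocks intrinsically. Two weight-one vectors lie in the same block if and only if one of the following holds:
\begin{enumerate}[(i)]
\item they are $\F_{q^m}$-dependent;
\item the $\F_{q^m}$-line they span contains $q+1$ projective points of weight one, rather than exactly $2$.
\end{enumerate}
This is preserved by $\phi$ and $\phi^{-1}$; the inverse is again a linear isometry, since $\phi$ is injective. Taking $\F_{q^m}$-spans gives $\phi(E_i)=E_{\tau(i)}$ with $n_{\tau(i)}=n_i$. Berger's theorem then applies to each induced map $\psi_j$, which preserves $\w_R$ because $\w_{SR}$ restricted to a single block is $\w_R$. This finishes the proof.

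What your route buys over the citation is a transparent reduction to the rank-metric isometry theorem already recorded in Section~\ref{sec:2}. The citation buys only brevity. Berger's theorem itself also follows from the same kind of argument. A linear rank isometry sends $e_1,\ldots,e_n$ and $e_1+\cdots+e_n$ to $\F_{q^m}$-multiples of rational vectors. Comparing this frame forces the map to be a scalar times an element of $\GL(n,q)$.

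Two expository points should be tidied, though neither affects the point-count argument you actually rely on.
\begin{enumerate}[(a)]
\item Your ``cleaner criterion'' does not separate the two cases as written. For $x,y$ in different blocks, every combination $ax+by$ also has weight at most $2$, and $x$ itself is a nonzero combination of weight $1$. You need a weight-one combination with both coefficients nonzero. Likewise, your first criterion needs $\mu\neq 0$.
\item The relation defining the classes should explicitly include $\F_{q^m}$-dependent pairs. That is precisely what handles blocks with $n_i=1$, and it makes clear that the relation coincides with ``same block'', hence is an equivalence relation.
\end{enumerate}
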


As a consequence, we give the definition of equivalent codes.

\begin{definition}
    Let $\mc{C},\mc{D}\subseteq\F_{q^m}^{\mathbf{n}}$ be two sum-rank metric codes. The codes $\mc{C},\mc{D}$ are said to be \textbf{equivalent} if there exists an $\mathbb{F}_{q^m}$-linear isometry $\phi$ of $(\F_{q^m}^{\mathbf{n}},d_{SR})$ such that $\phi(\mc{C})=\mc{D}$.
\end{definition}
Similarly as for the Hamming metric and for the rank metric, one can prove a Singleton-like bound also for sum-rank metric codes; see \cite{byrne2021fundamental}.

\begin{theorem}
\label{th:Singletonboundsumrank}
     Let $\mathcal{C}$ be an $[\mathbf{n},k,d]_{q^m/q}$ sum-rank metric code. Then 
\begin{equation}\label{eq:boundgensumrank}
    d \leq N-k+1.
\end{equation}
\end{theorem}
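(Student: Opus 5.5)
The plan is to reduce the sum-rank Singleton bound to the classical Hamming-type dimension count: a $k$-dimensional subspace cannot avoid a coordinate subspace of codimension $k-1$. The sum-rank weight is dominated by the Hamming weight over $\F_{q^m}$ after concatenating the blocks, so it suffices to show that $\C$ contains a nonzero codeword with at most $N-k+1$ nonzero coordinates.

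First I record the pointwise inequality. For $x=(x_1,\ldots,x_t)\in\F_{q^m}^{\mathbf{n}}$ with $x_i\in\F_{q^m}^{n_i}$, the rank weight $\w_R(x_i)=\dim_{\F_q}\langle x_{i,1},\ldots,x_{i,n_i}\rangle_{\F_q}$ is at most the number of nonzero entries of $x_i$. Indeed, the span is generated by those nonzero entries alone. Summing over $i$ gives $\w_{SR}(x)\le \w_H(x)$, where $\w_H$ is the Hamming weight of $x$ viewed as a vector in $\F_{q^m}^N$.

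Next I carry out the dimension count. Consider the $\F_{q^m}$-linear projection $\pi:\F_{q^m}^{\mathbf{n}}\cong\F_{q^m}^N\to\F_{q^m}^{k-1}$ onto the first $k-1$ coordinates of the concatenated vector. This requires $k-1\le N$, which holds since $k\le N$. Restricted to $\C$, which has $\dim_{\F_{q^m}}\C=k>k-1$, the map $\pi$ has a nontrivial kernel, so there is a nonzero $c\in\C$ whose first $k-1$ coordinates vanish. Then $\w_H(c)\le N-(k-1)$. By linearity, $d(\C)=\min\{\w_{SR}(c)\,:\,c\in\C\setminus\{0\}\}$, and combining with the first step gives
\[
d\le \w_{SR}(c)\le \w_H(c)\le N-k+1 .
\]

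I do not expect a genuine obstacle; the only point needing care is the reduction $d(\C)=\min \w_{SR}$ over nonzero codewords. This follows because $d_{SR}(x,y)=\w_{SR}(x-y)$ and $\C$ is closed under subtraction, so any pair of distinct codewords gives a nonzero codeword and conversely.
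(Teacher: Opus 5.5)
Your proof is correct. Blockwise, $\w_R(x_i)$ is at most the number of nonzero entries of $x_i$, so $\w_{SR}(x)\le \w_H(x)$ for the concatenated vector in $\F_{q^m}^N$. The restriction to $\C$ of the projection onto $k-1$ coordinates has a nonzero kernel by a dimension count; for $k=1$ this projection is the zero map, which is also fine.

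The paper does not prove this statement; it only cites \cite{byrne2021fundamental}. That reference proves a finer Singleton-type bound on $|\C|$ for arbitrary, not necessarily linear, sum-rank codes in the matrix model, with blocks of possibly different sizes. Its method is a puncturing argument that works directly with the block structure: deleting whole blocks, plus some rows of one further block, with total rank capacity $d-1$, is injective on $\C$.

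Your route is different in spirit. You never use the block structure beyond the domination $\w_{SR}\le \w_H$, and you then import the classical Hamming Singleton bound. This gives a two-line proof and an immediate by-product: an MSRD code is automatically MDS as a code in $\F_{q^m}^N$, since $N-k+1=d_{SR}(\C)\le d_H(\C)\le N-k+1$.

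What it gives up is the refined information the cited bound carries when the blocks have different shapes. As written it also uses $\F_{q^m}$-linearity. However, the injectivity form of your argument removes that restriction: deleting any $d-1$ coordinates is injective on $\C$ because $d_H\ge d_{SR}\ge d$, which gives $|\C|\le q^{m(N-d+1)}$ for nonlinear codes too. In the setting of this paper, where all blocks are over $\F_{q^m}$, the two bounds coincide.
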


An $[\mathbf{n},k,d]_{q^m/q}$ code is called \textbf{Maximum Sum-Rank Distance code} (or in a short form \textbf{MSRD code}) if its parameters reach the equality in the bound \eqref{eq:boundgensumrank}.

As we have done for rank-metric codes we review also for sum-rank metric codes their interpretation as systems, thus making explicit their geometry; see \cite{neri2021geometry}. First of all observe that each element in $\F_{q^m}^{\mathbf{n}}$ is a tuple of vectors of various dimensions, thus it can be seen by juxtaposition as a vector in $\F_{q^m}^N$. Therefore, we can consider to every $[\mathbf{n},k]_{q^m/q}$ sum-rank metric code a generator matrix $G=(G_1|\ldots|G_t)\in\F_{q^m}^{k\times N}$, where $G_i\in\F_{q^m}^{k\times n_i}$ for every $i\in\{1,\ldots,t\}$.
Also, we will only consider codes that are \textbf{non-degenerate}, that is, if the columns of every $G_i$ are $\fq$-linearly independent.

\begin{theorem}(\hspace{-0.01cm}\cite[Theorem 3.1]{neri2021geometry})
Let $\mathcal{C}$ be a  non-degenerate $[\mathbf{n},k,d]_{q^m/q}$ sum-rank metric code with generator matrix $G=(G_1|\ldots|G_t)$.
Let $\mathcal{U}_i$ be the $\F_q$-span of the columns of $G_i$, for $i\in \{1,\ldots,t\}$. Then, for every $v\in \Fm^k$ we have
\begin{equation}\label{eq:weight_dimension}
\w(v G) = N - \sum_{i=1}^t \dim_{\fq}(\mathcal{U}_i \cap v^{\perp}).\end{equation}
In particular,
\begin{equation}
d=N - \max\left\{ \sum_{i =1}^t\dim_{\fq}(\mathcal{U}_i \cap H)  \colon H\mbox{ is an } \F_{q^m}\mbox{-hyperplane of }\F_{q^m}^k  \right\}.    
\end{equation}
\end{theorem}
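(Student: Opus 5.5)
The plan is to reduce the statement block by block to Theorem~\ref{th:connection}, since the sum-rank weight splits as a sum of rank weights. Write $v\in\F_{q^m}^k$ and $vG=(vG_1,\ldots,vG_t)$. By definition of the sum-rank weight,
\[
\w(vG)=\sum_{i=1}^t \w_R(vG_i).
\]
So it suffices to compute each $\w_R(vG_i)$ separately.

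Fix $i$ and look at the rank-metric code generated by $G_i$. The columns $g_{i,1},\ldots,g_{i,n_i}$ of $G_i$ are $\F_q$-linearly independent by non-degeneracy. Hence $\dim_{\F_q}(\mathcal U_i)=n_i$.

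The entries of $vG_i$ are $v\cdot g_{i,j}$. The map $u\mapsto v\cdot u$ restricted to $\mathcal U_i$ is $\F_q$-linear, and its image is $\langle v\cdot g_{i,1},\ldots,v\cdot g_{i,n_i}\rangle_{\F_q}$. By rank–nullity,
\[
\w_R(vG_i)=n_i-\dim_{\F_q}(\mathcal U_i\cap v^\perp).
\]
This is exactly the argument behind \eqref{eq:relweight}. I re-derive it rather than cite Theorem~\ref{th:connection}, because that theorem assumes $\mathcal U_i$ spans $\F_{q^m}^k$, which need not hold for a single block; the linear-algebra argument above does not need this. Summing over $i$ and using $N=\sum_i n_i$ gives \eqref{eq:weight_dimension}.

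For the minimum distance, linearity gives $d=\min_{v\neq 0}\w(vG)$. Here $G$ has full rank $k$, so $v\neq0$ implies $vG\neq 0$. As $v$ ranges over nonzero vectors, $v^\perp$ ranges over all $\F_{q^m}$-hyperplanes $H$ of $\F_{q^m}^k$. Every hyperplane arises this way, and the weight depends only on $v^\perp$, since scaling $v$ does not change $v^\perp$. Minimizing the weight therefore amounts to maximizing $\sum_i\dim_{\F_q}(\mathcal U_i\cap H)$, which gives the displayed formula for $d$.

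There is no real obstacle here. The only points needing care are the non-spanning issue for an individual block and the bijection between hyperplanes and nonzero vectors up to scalars.
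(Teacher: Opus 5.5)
Your proof is correct and complete. The paper gives no argument of its own for this statement and simply imports it from \cite{neri2021geometry}; your reduction is the standard derivation of the identity. You split $\w(vG)=\sum_i \w_R(vG_i)$ and apply rank--nullity to the $\F_q$-linear map $u\mapsto v\cdot u$ on $\mathcal U_i$, whose image is the $\F_q$-span of the entries of $vG_i$ and whose kernel is $\mathcal U_i\cap v^\perp$. Your caution that Theorem~\ref{th:connection} cannot be quoted verbatim for a single block, since $G_i$ need not have rank $k$, is well placed, and you use non-degeneracy exactly where it is needed, namely to get $\dim_{\F_q}(\mathcal U_i)=n_i$.
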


The above theorem allows us to consider the notion of systems also in this framework.

\begin{definition}
Let $\mathcal{U}_i$ be an $\F_q$-subspace of $\F_{q^m}^k$ for any $i\in \{1,\ldots,t\}$, such that
$\langle \mathcal{U}_1, \ldots, \mathcal{U}_t \rangle_{\F_{q^m}}=\F_{q^m}^k$. We say that  $\mathcal{U}=(\mathcal{U}_1,\ldots,\mathcal{U}_t)$ is an $[\mathbf{n},k,d]_{q^m/q}$-\textbf{system} if $n_i=\dim_{\F_q}(\mc{U}_i)$ and 
$d=N-\max\left\{\sum_{i=1}^t\dim_{\F_q}(\mathcal{U}_i\cap H) \,:\, H \text{ is an }\F_{q^m}\text{-hyperplane of }\F_{q^m}^k\right\}.$

When not considering parameters explicitly, we will refer to $\mc{U}$ as a $q$-system. Moreover, two $[\mathbf{n},k,d]_{q^m/q}$-systems $\mc{U}$ and $\mathcal{V}$ are \textbf{equivalent} if there exists an $\F_{q^m}$-linear isomorphism $\phi:\mathbb{F}_{q^m}^k\rightarrow\mathbb{F}_{q^m}^k$ such that $\phi(\mc{U})=\mc{V}$.
\end{definition}

\begin{remark}
Directly from the previous definition we have that two $[\mathbf{n},k,d]_{q^m/q}$-systems $\mc{U}$ and $\mc{V}$ are equivalent if there exist $A\in\GL(k,q^m)$, some constants $\gamma_1,\ldots,\gamma_t\in \Fm^*$ and a permutation $\sigma\in\Sym(t)$, such that $\mathcal{U}_i=\gamma_i\mathcal{V}_{\sigma(i)}A$ for every $i\in\{1,\ldots,t\}$.
We point out that the matrix $A$ does not depend on $i$.
\end{remark}

As for rank-metric codes, there is a one-to-one correspondence between equivalence classes of sum-rank metric codes and equivalence classes of systems.

\begin{theorem}(\hspace{-0.01cm}\cite[Theorem 3.7]{neri2021geometry})
\label{th:correspsum}
 Let $\mathfrak{C}[\mathbf{n},k,d]_{q^m/q}$ be the set of equivalence classes of sum-rank nondegenerate $[\mathbf{n},k,d]_{q^m/q}$-codes and $\mathfrak{U}[\mathbf{n},k,d]_{q^m/q}$ the set of equivalence classes of $[\mathbf{n},k,d]_{q^m/q}$-systems. There exists a one-to-one correspondence between $\mathfrak{C}[\mathbf{n},k,d]_{q^m/q}$ and $\mathfrak{U}[\mathbf{n},k,d]_{q^m/q}$.   
\end{theorem}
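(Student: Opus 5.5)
The plan is to write down explicit maps in both directions between codes and systems, check that each is well defined on equivalence classes, and check that the two induced maps are mutually inverse. The parameters $[\mathbf{n},k,d]$ will be controlled by the weight formula \eqref{eq:weight_dimension} of \cite[Theorem 3.1]{neri2021geometry}.

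\textbf{From codes to systems.} Let $\C$ be a non-degenerate $[\mathbf{n},k,d]_{q^m/q}$ code with generator matrix $G=(G_1|\ldots|G_t)$. Let $\mathcal{U}_i$ be the $\F_q$-span of the columns of $G_i$.
\begin{itemize}
\item By non-degeneracy, $\dim_{\F_q}(\mathcal{U}_i)=n_i$.
\item Since $G$ has rank $k$, its columns span $\F_{q^m}^k$ over $\F_{q^m}$, so $\langle \mathcal{U}_1,\ldots,\mathcal{U}_t\rangle_{\F_{q^m}}=\F_{q^m}^k$.
\item By \eqref{eq:weight_dimension}, the minimum distance of $\C$ equals $N-\max_H\sum_i\dim_{\F_q}(\mathcal{U}_i\cap H)$, so $(\mathcal{U}_1,\ldots,\mathcal{U}_t)$ is an $[\mathbf{n},k,d]_{q^m/q}$-system.
\end{itemize}

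\textbf{Well-definedness of this map on classes.} First, another generator matrix of $\C$ has the form $MG$ with $M\in\GL(k,q^m)$. Its blocks have column spans $M\mathcal{U}_i$, and this is an equivalent system via the single isomorphism $v\mapsto Mv$ (in row notation, $\mathcal{U}_iA$ with $A=M^\top$).

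Second, replace $\C$ by an equivalent code, using the characterization of sum-rank isometries. After transposing conventions, block $i$ of the new generator matrix is $\gamma_i G_{\sigma(i)}A_i$ with $A_i\in\GL(n_{\sigma(i)},q)$. Right multiplication by a matrix in $\GL(\cdot,q)$ does not change the $\F_q$-span of the columns. Scaling by $\gamma_i$ multiplies the span by $\gamma_i$. Hence the new system is $(\gamma_i\mathcal{U}_{\sigma(i)})_i$, which is equivalent to $(\mathcal{U}_i)_i$ in the sense of the remark following the definition (take $A$ the identity there).

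\textbf{From systems to codes.} Given a system $(\mathcal{U}_1,\ldots,\mathcal{U}_t)$, choose an $\F_q$-basis of each $\mathcal{U}_i$ and place the basis vectors as the columns of $G_i$. Set $G=(G_1|\ldots|G_t)$.
\begin{itemize}
\item $G$ has rank $k$, because the $\mathcal{U}_i$ span $\F_{q^m}^k$.
\item The code generated by $G$ is non-degenerate, because the columns of each $G_i$ are an $\F_q$-basis.
\item Again by \eqref{eq:weight_dimension}, its minimum distance is $d$.
\end{itemize}

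\textbf{Well-definedness of the inverse map.} A different choice of bases replaces $G_i$ by $G_iB_i$ with $B_i\in\GL(n_i,q)$, which is an isometry. An equivalent system $(\gamma_i\mathcal{V}_{\sigma(i)}A)_i$ yields a generator matrix obtained from the original one as follows:
\begin{itemize}
\item left multiplication by an invertible $k\times k$ matrix (from $A$), which does not change the code;
\item block scalings by $\gamma_i$;
\item a block permutation by $\sigma$;
\item a change of bases in each block.
\end{itemize}
The last three operations together form a sum-rank isometry. Hence the resulting codes are equivalent.

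\textbf{Bijectivity.} The two constructions are inverse to each other on classes essentially by definition. The system of the code built from chosen bases of the $\mathcal{U}_i$ is $(\mathcal{U}_i)_i$ itself. The code built from the system of $\C$ has a generator matrix differing from $G$ only by $\F_q$-base changes inside the blocks, so it is equivalent to $\C$.

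The main obstacle is purely bookkeeping. One has to keep the row-vector versus column-vector conventions consistent: $\mathcal{U}A$ versus $M\mathcal{U}$, and $x_iA_i$ versus $A_ix_i$. One also has to check that the block permutation and the scalars $\gamma_i$ in the isometry theorem correspond exactly to those in the definition of system equivalence, including the non-square case where $\sigma$ must preserve the block lengths $n_i$.
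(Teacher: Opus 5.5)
Your proof is correct. The paper gives no proof of its own here and only cites \cite[Theorem 3.7]{neri2021geometry}; your construction is essentially the standard argument behind that result, namely the sum-rank analogue of the argument behind Theorem~\ref{th:corresp}: column spans of the blocks of a generator matrix in one direction, $\F_q$-bases of the $\mathcal{U}_i$ in the other, with well-definedness checked through the characterization of sum-rank isometries and the weight formula \eqref{eq:weight_dimension}.
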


Thanks to Theorem~\ref{th:correspsum}, sum-rank metric codes and systems can be studied interchangeably. Therefore, in the following, we will freely move between these two perspectives, choosing the one that is more convenient for the problem at hand.

\subsection{Linearized Reed-Solomon codes}

In this section, we will introduce the family of linearized Reed-Solomon codes. The construction of linearized Reed-Solomon codes was first given in \cite{Martinez2018skew}, where the author described them in the skew polynomial setting. We will use the approach introduced in the work of Neri in \cite{neri2022twisted} in the linearized polynomial language.

Let us introduce the polynomial ring $(\mathbb{F}_{q^m}[Y;q^s],+,\cdot)$ and link it with $\tilde{\mc{L}}_{m,q^s}$. By $\mathbb{F}_{q^m}[Y;q^s]$ we consider the set of polynomials in the indeterminate $Y$ with coefficients in $\mathbb{F}_{q^m}$, where the sum is the usual addition of polynomials, and the multiplication follows the rule
\begin{equation*}
    Ya=a^{q^s}Y, \text{ for any }a\in\mathbb{F}_{q^m}
\end{equation*}
extended by associativity and distributivity. It can be checked that $\mathbb{F}_{q^m}[Y;q^s]$ is commutative if and only if $s\equiv0\pmod m$ and its center is $\mathbb{F}_q[Y^m]$. 

\begin{theorem}{(\hspace{-0.01cm}\cite[Theorem 2.6]{neri2022twisted})}
Let
\begin{align*}
    \Phi\,:\,\qquad\qquad\mathbb{F}_{q^m}[Y;q^s]\qquad\qquad&\rightarrow\qquad\qquad\tilde{\mc{L}}_{m,q^s}\\
    f_0+f_1Y+\ldots+f_dY^d\quad&\mapsto\quad f_0x+f_1x^{q^s}+\ldots+f_dx^{q^{sd}}.
\end{align*}
    Then $\Phi$ is a $\mathbb{F}_q$-algebra surjective homomorphism whose kernel is the two-sided ideal $(Y^m-1)$. Consequently,
    \begin{equation*}
        \mathbb{F}_{q^m}[Y;q^s]/(Y^m-1)\cong\tilde{\mc{L}}_{m,q^s}.
    \end{equation*}
\end{theorem}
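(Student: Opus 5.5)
We must show three things about $\Phi$: it is an $\F_q$-algebra homomorphism, it is surjective, and its kernel is the two-sided ideal $(Y^m-1)$. The isomorphism $\F_{q^m}[Y;q^s]/(Y^m-1)\cong\tilde{\mc{L}}_{m,q^s}$ then follows from the first isomorphism theorem for rings.

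\textbf{Additivity and multiplicativity.} $\Phi$ is clearly additive and $\F_q$-linear, since $\F_q$ is central in $\F_{q^m}[Y;q^s]$ and scalar multiplication by $\F_q$ commutes with composition of $\F_q$-linear maps. By distributivity, multiplicativity only needs checking on monomials $aY^i\cdot bY^j$. Iterating the rule $Ya=a^{q^s}Y$ gives $Y^ib=b^{q^{si}}Y^i$, so
\[
aY^i\cdot bY^j=ab^{q^{si}}Y^{i+j}.
\]
On the other side,
\[
(ax^{q^{si}})\circ(bx^{q^{sj}})=a\bigl(bx^{q^{sj}}\bigr)^{q^{si}}=ab^{q^{si}}x^{q^{s(i+j)}}.
\]
These agree as elements of $\tilde{\mc{L}}_{m,q^s}$, where exponents are read modulo $x^{q^{sm}}-x$, i.e.\ $x^{q^{s(i+j)}}$ reduces to $x^{q^{s((i+j)\bmod m)}}$ as a function on $\F_{q^m}$. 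Units map to units, since $\Phi(1)=x$.

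\textbf{Surjectivity.} Every class in $\tilde{\mc{L}}_{m,q^s}$ has a representative $\sum_{i=0}^{m-1}a_ix^{q^{si}}$, and this is the image of $\sum_{i=0}^{m-1} a_iY^i$.

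\textbf{Kernel.} This is the main point, and it needs three ingredients.

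First, $Y^m-1$ is central. We have $Y^ma=a^{q^{sm}}Y^m=aY^m$ because $a^{q^{sm}}=a$ in $\F_{q^m}$. Hence the left ideal it generates equals the two-sided ideal $(Y^m-1)$.

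Second, $(Y^m-1)\subseteq\ker\Phi$. Indeed $\Phi(Y^m-1)=x^{q^{sm}}-x$, which is the zero function on $\F_{q^m}$.

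Third, $\ker\Phi\subseteq(Y^m-1)$. The ring admits right division by the monic element $Y^m-1$, because leading coefficients can be cancelled using $Y^m-1$ as a monic divisor. So any $f$ can be written $f=g(Y^m-1)+r$ with $\deg r<m$. If $f\in\ker\Phi$, then $\Phi(r)=0$, meaning $r(x)=\sum_{i<m}r_ix^{q^{si}}$ vanishes identically on $\F_{q^m}$.

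It remains to show that such an $r$ is zero. I would argue via linear independence of the distinct automorphisms $x\mapsto x^{q^{si}}$, $i=0,\dots,m-1$. These are distinct because $\gcd(s,m)=1$, so $si\bmod m$ runs over all residues. Artin's lemma on independence of characters then gives $r_i=0$ for all $i$. Alternatively, the same conclusion follows from the Dickson matrix having rank $0$, or from a counting argument: the map $\F_{q^m}[Y;q^s]_{<m}\to \mathrm{End}_{\F_q}(\F_{q^m})$ is a surjective map between spaces of dimension $m^2$ over $\F_q$, hence injective.

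The expected obstacle is exactly this injectivity on degree $<m$, together with the correct handling of the quotient $\tilde{\mc{L}}_{m,q^s}$. I would use the dimension count: $\tilde{\mc{L}}_{m,q^s}$ is isomorphic to $\mathrm{End}_{\F_q}(\F_{q^m})$, which has $\F_q$-dimension $m^2$, and surjectivity is already established, so a map from an $m^2$-dimensional space onto it must be injective.
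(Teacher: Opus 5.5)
The paper does not prove this statement itself; it quotes it from \cite[Theorem 2.6]{neri2022twisted}, so there is no in-paper proof to compare against. Your proposal is a correct and complete proof along standard lines:
\begin{itemize}
\item multiplicativity checked on monomials;
\item surjectivity by reducing exponents modulo $x^{q^{sm}}-x$;
\item centrality of $Y^m-1$ together with right division by it;
\item injectivity on skew polynomials of degree less than $m$.
\end{itemize}

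In the last step you rightly avoid naive root counting. That argument fails for $s>1$, because $\sum_{i<m}r_ix^{q^{si}}$ can have ordinary degree $q^{s(m-1)}>q^m$. Artin's lemma on the distinct automorphisms $x\mapsto x^{q^{si}}$ handles this cleanly. Within the paper's own toolkit, Theorem~\ref{thm:boundnumbroots} would do the same job: a nonzero $r$ of $q^s$-degree at most $m-1$ has kernel of $\F_q$-dimension at most $m-1$, so it cannot vanish on all of $\fqm$.

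It is also worth seeing what each ingredient actually buys. Read $\tilde{\mc{L}}_{m,q^s}$ literally as the formal quotient $\mc{L}_{m,q^s}/(x^{q^{sm}}-x)$ of Section~\ref{sec:2}. Then the kernel computation is purely formal:
\begin{itemize}
\item formal $q^s$-polynomials under composition already form a ring isomorphic to $\F_{q^m}[Y;q^s]$ via $Y^i\mapsto x^{q^{si}}$;
\item the ideal $\{(x^{q^{sm}}-x)\circ f\}$ corresponds to $(Y^m-1)$;
\item degrees add, so no nonzero element of degree less than $m$ lies in that ideal.
\end{itemize}

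The independence-of-characters step is what is really needed to identify this quotient with $\mathrm{End}_{\F_q}(\fqm)$. That identification is what makes the later evaluations $F(\beta)$ meaningful. Your preferred dimension-count variant takes the identification as given. This is legitimate, since the paper asserts it, but the Artin argument is the one that makes your proof self-contained.
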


Observe that we can define the evaluation of a polynomial $\mathbb{F}_{q^m}[Y;q^s]/(Y^m-1)$ in an element $\beta\in\mathbb{F}_{q^m}$ as the evaluation of $\Phi(F)$ in $\beta$, and with this meaning we will write $F(\beta)$.

\begin{definition}
Let $s$ be an integer coprime with $m$, the \textbf{norm} of $\alpha$ with respect to $\mathbb{F}_{q^m}/\mathbb{F}_q$ is defined by
    \begin{equation*}
        \mathrm{N}_s(\alpha):=\prod_{i=0}^{m-1}\alpha^{q^{si}}.
    \end{equation*}
We also define the \textbf{$j$-th truncated norm} for every $\alpha\in\mathbb{F}_{q^m}$ as
\begin{equation*}
\mathrm{N}_s^j(\alpha):=
    \begin{cases}
    1&j=0,\\
\prod_{i=0}^{j-1}\alpha^{q^{si}}&j\geq1.
    \end{cases}
\end{equation*}
\end{definition}

Let $\alpha_1,\ldots,\alpha_t\in\mathbb{F}_{q^m}^*$ be elements with pairwise distinct norms. Let $\Lambda=\{\,\lambda_1,\ldots,\lambda_t\,\}\subseteq\mathbb{F}_q^*$ where $\lambda_i=\mathrm{N}_s(\alpha_i)$. Define
\begin{equation*}
    H_{\Lambda}(Y)=\prod_{i=1}^t(Y^{m}-\lambda_i)\in\mathbb{F}_{q^m}[Y;q^s].
\end{equation*}
By definition $H_{\Lambda}(Y)$ belongs to $\mathbb{F}_q[Y^m]$ which is the center of $\mathbb{F}_{q^m}[Y;q^s]$, as it is the product of $t$ central polynomials. Then $H_{\Lambda}(Y)$ generates a two-sided ideal.

Given a skew polynomial $F(Y)=f_0+f_1Y+\ldots+f_dY^d$ and $\alpha\in\mathbb{F}_{q^m}^*$ we denote by $F_\alpha$ the polynomial obtained from $F$ as
\begin{equation*}
    F_{\alpha}(Y):=\sum_{i=0}^d f_i\mathrm{N}_s^i(\alpha)Y^i.
\end{equation*}
With this notation, we can generalize the previous result on $\Phi$ to the following.

\begin{theorem}{(\hspace{-0.01cm}\cite[Theorem 4.1]{neri2022twisted})}
With the previous notation, the map
\begin{align*}
    \Phi_{\alpha}\,:\,\mathbb{F}_{q^m}[Y;q^s]\quad&\rightarrow\quad(\tilde{\mc{L}}_{m,q^s})^t\\
    F(Y)\quad&\mapsto\quad (\Phi(F_{\alpha_1}),\ldots,\Phi(F_{\alpha_t})).
\end{align*}
is a surjective $\mathbb{F}_q$-algebra homomorphism, whose kernel is $(H_{\Lambda}(Y))$. Hence, it induces an $\mathbb{F}_q$-algebra isomorphism
\begin{equation*}
    \mathbb{F}_{q^m}[Y;q^s]/(H_{\Lambda}(Y))\cong(\tilde{\mc{L}}_{m,q^s})^t.
\end{equation*}
\end{theorem}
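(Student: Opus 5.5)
The plan is to reduce everything to the one-block statement (the theorem on $\Phi$ with kernel $(Y^m-1)$) by means of a ring automorphism of $\mathbb{F}_{q^m}[Y;q^s]$, and then to glue the $t$ blocks together with the Chinese Remainder Theorem for central, pairwise coprime polynomials.

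\emph{Step 1: the twist $F\mapsto F_\alpha$ is an automorphism.} For $\alpha\in\mathbb{F}_{q^m}^*$ one computes, using $Ya=a^{q^s}Y$, that $(\alpha Y)^i=\alpha\alpha^{q^s}\cdots\alpha^{q^{s(i-1)}}Y^i=\mathrm{N}_s^i(\alpha)Y^i$. Hence $F_\alpha(Y)=F(\alpha Y)$, i.e.\ $F_\alpha$ is obtained by substituting $Y\mapsto \alpha Y$. Since $(\alpha Y)a=\alpha a^{q^s}Y=a^{q^s}(\alpha Y)$, the element $\alpha Y$ satisfies the same commutation rule as $Y$. Therefore $\tau_\alpha\colon F\mapsto F_\alpha$ is an $\mathbb{F}_q$-algebra endomorphism of $\mathbb{F}_{q^m}[Y;q^s]$. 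It is invertible, with inverse $\tau_{\alpha^{-1}}$. Composing with $\Phi$, each component $F\mapsto\Phi(F_{\alpha_i})$ is a surjective $\mathbb{F}_q$-algebra homomorphism. By the theorem on $\Phi$, its kernel is $\tau_{\alpha_i}^{-1}\big((Y^m-1)\big)=(\tau_{\alpha_i^{-1}}(Y^m-1))$. Now $(\alpha_i^{-1}Y)^m=\mathrm{N}_s(\alpha_i)^{-1}Y^m$, so this kernel is $(\lambda_i^{-1}Y^m-1)=(Y^m-\lambda_i)$.

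\emph{Step 2: the kernel of $\Phi_\alpha$.} It follows that $\ker\Phi_\alpha=\bigcap_{i=1}^t (Y^m-\lambda_i)$. The polynomials $Y^m-\lambda_i$ lie in the commutative central subring $\mathbb{F}_q[Z]$ with $Z=Y^m$, and they are pairwise coprime there because the $\lambda_i$ are distinct. So for $i\neq j$ we get a Bézout identity $a(Y^m-\lambda_i)+b(Y^m-\lambda_j)=1$ with $a,b$ central.

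Using this, I will prove by induction that $\bigcap_i (Y^m-\lambda_i)=(\prod_i (Y^m-\lambda_i))=(H_\Lambda)$. The key is the standard argument: if $F=G\,(Y^m-\lambda_i)$ also lies in $(Y^m-\lambda_j)$, multiply $F$ by the Bézout identity and use centrality to move factors freely. I expect this step to be the main obstacle: the ring is noncommutative, and the argument works only because every element involved ($H_\Lambda$, the factors $Y^m-\lambda_i$ and the Bézout coefficients) is central, so left, right and two-sided ideals generated by them coincide.

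\emph{Step 3: surjectivity.} The ideals $(Y^m-\lambda_i)$ are two-sided and pairwise comaximal by the same Bézout identities. The Chinese Remainder Theorem for rings therefore gives
\[
\mathbb{F}_{q^m}[Y;q^s]/(H_\Lambda)\cong\prod_i \mathbb{F}_{q^m}[Y;q^s]/(Y^m-\lambda_i),
\]
and each factor is isomorphic to $\tilde{\mc{L}}_{m,q^s}$ via Step 1. This yields surjectivity of $\Phi_\alpha$ and the induced isomorphism.

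As a sanity check, the dimension count agrees. Both sides have $\mathbb{F}_q$-dimension $t\cdot m\cdot m$: the left side because $H_\Lambda$ is monic of degree $tm$ in $Y$ and right division by it gives a basis of residues of degree $<tm$ over $\mathbb{F}_{q^m}$. So injectivity of the induced map would also suffice for surjectivity.
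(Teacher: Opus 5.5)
The paper gives no proof of this statement: it is quoted from \cite[Theorem 4.1]{neri2022twisted}. There is therefore no in-paper argument to compare yours with, and your proof has to stand on its own. It does; it is correct and self-contained.

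The identity $(\alpha Y)^i=\mathrm{N}_s^i(\alpha)Y^i$ gives $F_\alpha=F(\alpha Y)$. Since $\alpha Y$ obeys the same commutation rule as $Y$, the substitution $\tau_\alpha\colon Y\mapsto \alpha Y$ is an $\F_q$-algebra automorphism with inverse $\tau_{\alpha^{-1}}$. If you prefer not to invoke the universal property of the skew polynomial ring, multiplicativity can be checked on monomials via $\mathrm{N}_s^i(\alpha)\,\mathrm{N}_s^j(\alpha)^{q^{si}}=\mathrm{N}_s^{i+j}(\alpha)$.

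Transporting the one-block kernel gives $\tau_{\alpha_i^{-1}}(Y^m-1)=\lambda_i^{-1}(Y^m-\lambda_i)$, which generates $(Y^m-\lambda_i)$, exactly as you compute.

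The elements $Y^m-\lambda_i$ are central, since $\lambda_i\in\F_q^*$. They are pairwise comaximal via $(\lambda_j-\lambda_i)^{-1}\bigl((Y^m-\lambda_i)-(Y^m-\lambda_j)\bigr)=1$. Your Bézout argument then correctly identifies $\bigcap_i (Y^m-\lambda_i)$ with $(H_\Lambda)$, and centrality is exactly what lets you move factors past each other. The Chinese Remainder Theorem for pairwise comaximal two-sided ideals holds verbatim in a noncommutative ring, so the induced map onto $\prod_i \F_{q^m}[Y;q^s]/(Y^m-\lambda_i)\cong(\tilde{\mc{L}}_{m,q^s})^t$ is surjective.

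Your dimension count is also right: both sides have $\F_q$-dimension $t m^2$. Once the kernel is known, this gives an independent route to surjectivity that avoids CRT altogether.
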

 
Finally, we are ready to define the linearized Reed-Solomon codes.

\begin{definition}
Let $s \in \{1,\ldots,m\}$ and $k\leq tm$ with $\gcd(s,m)=1$.
Let $\alpha=(\alpha_1,\ldots,\alpha_t)\in\F_{q^m}^t$ be a tuple of nonzero elements with pairwise distinct norms over $\fq$. 
The \textbf{linearized Reed-Solomon code} of dimension $k$ is the code
\begin{equation*}
  LR_k^{q^s}[\alpha]=\{(F_{\alpha_1}(x),\ldots,F_{\alpha_t}(x)):F\in\tilde{\mc{L}}_{m,q^s}, \deg(F)_{q^s}\leq k-1\}.
\end{equation*}
\end{definition}

Let us now show that this type of codes can be seen as evaluation codes in $\F_{q^m}^{\mathbf{n}}$, referring to \cite{neri2021geometry}. In particular, take $\beta=(\beta_1,\ldots,\beta_n)\in\F_{q^m}^n$ and define the following map
\begin{align*}
\mathrm{ev}_{\beta}:\tilde{\mc{L}}_{m,q^s}&\longrightarrow\F_{q^m}^n\\
    f&\longmapsto(f(\beta_1),\ldots,f(\beta_n))
\end{align*}
With this notation, we can give the definition of linearized Reed-Solomon codes as evaluation codes.
For our purposes, we will only define linearized Reed-Solomon codes $\fqm^{\mathbf{n}}$ where $n_1=\ldots=n_t$.

\begin{definition}
Let $s \in \{1,\ldots,m\}$ and $k\leq tm$ with $\gcd(s,m)=1$.
Let $\mathbf{n}=(n,\ldots,n)$ with $n\leq m$ and let $\alpha=(\alpha_1,\ldots,\alpha_t)\in\F_{q^m}^t$ be a tuple of nonzero elements with pairwise distinct norms over $\fq$. 
Let $\beta=(\beta_1,\ldots,\beta_n)\in\F_{q^m}^{n}$, where $\beta_1,\ldots,\beta_n$ are $\F_q$-linearly independent, and define
\begin{equation*}
  LR_k^{q^s}[\alpha,\beta]=\{(\mathrm{ev}_{\beta}(F_{\alpha_1}),\ldots,\mathrm{ev}_{\beta}(F_{\alpha_t})):F\in\tilde{\mc{L}}_{m,q^s}, \deg(F)_{q^s}\leq k-1\}\subseteq\F_{q^m}^{\mathbf{n}}.
\end{equation*}
Then $LR_k^{q^s}[\alpha,\beta]$ is an $[\mathbf{n},k]_{q^m/q}$ linearized Reed-Solomon code. Moreover, whenever $\beta_1,\ldots,\beta_n$ are a standard basis of $\F_{q^m}^{n}$, the notation is softened to $LR_k^{q^s}[\alpha]$.
\end{definition}

Linearized Reed-Solomon codes were the first examples of MSRD codes, as proved by Mart{\'\i}nez-Pe{\~n}as in \cite{Martinez2018skew}.

\begin{theorem}{(\hspace{-0.01cm}\cite[Theorem 4]{Martinez2018skew})}
Let $s \in \{1,\ldots,m\}$ and $k\leq tm$ with $\gcd(s,m)=1$.
Let $\mathbf{n}=(n,\ldots,n)$ with $n\leq m$ and let $\alpha=(\alpha_1,\ldots,\alpha_t)\in\F_{q^m}^t$ be a tuple of nonzero elements with pairwise distinct norms over $\fq$. 
Let $\beta=(\beta_1,\ldots,\beta_n)\in\F_{q^m}^{n}$, where $\beta_1,\ldots,\beta_n$ are $\F_q$-linearly independent, then $LR_k^{q^s}[\alpha,\beta]$ is an MSRD code in $\F_{q^m}^{\mathbf{n}}$.
\end{theorem}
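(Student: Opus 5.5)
The statement to prove is the MSRD property of $LR_k^{q^s}[\alpha,\beta]$: for every nonzero $F$ with $\deg_{q^s}(F)\leq k-1$, the codeword $(\mathrm{ev}_\beta(F_{\alpha_1}),\ldots,\mathrm{ev}_\beta(F_{\alpha_t}))$ has sum-rank weight at least $N-k+1$. By Theorem~\ref{th:Singletonboundsumrank} this suffices. I will work blockwise, prove a bound on the total kernel dimension, and then restrict to the subspace $\langle\beta_1,\ldots,\beta_n\rangle_{\F_q}$.

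\textbf{Reduction to a kernel bound.} For block $i$, the rank of $\mathrm{ev}_\beta(F_{\alpha_i})$ equals $\dim_{\F_q}\Phi(F_{\alpha_i})(S)$, where $S=\langle\beta_1,\ldots,\beta_n\rangle_{\F_q}$. This is
\[
n-\dim_{\F_q}(\ker\Phi(F_{\alpha_i})\cap S)\geq n-\dim_{\F_q}\ker\Phi(F_{\alpha_i}).
\]
It is therefore enough to show
\[
\sum_{i=1}^t\dim_{\F_q}\ker\Phi(F_{\alpha_i})\leq k-1
\]
for every nonzero $F$ of degree at most $k-1$. The restriction to $S$ then only lowers each kernel, and summing over blocks gives weight at least $tn-(k-1)$, as required.

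\textbf{Passing to the kernels of $\Phi(F_{\alpha_i})$.} First note that $F_\alpha(Y)=F(\alpha Y)$ in the skew ring, since $(\alpha Y)^i=\mathrm{N}_s^i(\alpha)Y^i$. Hence $\Phi(F_{\alpha_i})(x)=\Phi(F)$ applied to the operator $x\mapsto\alpha_i x^{q^s}$. An element $x\neq0$ lies in $\ker\Phi(F_{\alpha_i})$ exactly when $F$ is right-divisible by the degree-one skew polynomial $Y-\alpha_i x^{q^s-1}$, up to the usual normalization that converts $\alpha_i$ to a conjugate $a=\alpha_i x^{q^s}/x$. These conjugates all satisfy $\mathrm{N}_s(a)=\lambda_i$. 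So $\ker\Phi(F_{\alpha_i})$ corresponds to the roots of $F$, in the sense of remainder evaluation, lying in the conjugacy class $C_i=\{a:\mathrm{N}_s(a)=\lambda_i\}$. Moreover, the $\F_q$-dimension of this kernel equals the degree of the minimal skew polynomial $P_i$ vanishing on that set of roots; this is Lam's P-independence theory. Theorem~\ref{thm:boundnumbroots} gives the single-class case ($t=1$) directly.

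\textbf{Combining the classes.} Let $P_i$ be the right minimal polynomial of the roots in class $C_i$, of degree $d_i=\dim\ker\Phi(F_{\alpha_i})$. Then $F$ is right-divisible by each $P_i$. The key claim is that the least common left multiple of $P_1,\ldots,P_t$ has degree $\sum d_i$. This holds because roots in distinct conjugacy classes are P-independent: a union of P-bases from distinct classes is a P-basis. Equivalently, $P_i$ divides $Y^m-\lambda_i$, and these central factors are pairwise coprime, so the CRT isomorphism $\F_{q^m}[Y;q^s]/(H_\Lambda)\cong(\tilde{\mc L}_{m,q^s})^t$ of the previous theorem applies. Through that isomorphism, the left ideal of polynomials $G$ with $\ker\Phi(G_{\alpha_i})\supseteq K_i$ for all $i$ has codimension $\sum d_i$, so its minimal-degree generator has degree $\sum d_i$. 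Since $F\neq0$ lies in this ideal and $\deg F\leq k-1<tm$, we get $\sum d_i\leq k-1$.

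The main obstacle is this last step. It requires identifying the ideal of polynomials vanishing on prescribed kernels as a principal left ideal whose generator degree equals the total dimension, with some care about whether $\deg F<\deg H_\Lambda$ is needed. I would use the CRT decomposition, counting $\F_{q^m}$-dimensions: each annihilator of a $d_i$-dimensional subspace in $\tilde{\mc L}_{m,q^s}$ has $\F_{q^m}$-codimension $d_i$. Comparing this with the dimension of the space of polynomials of degree less than $D$ forces the generator degree to be at least $\sum d_i$.
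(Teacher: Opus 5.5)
Your proof is correct. The paper gives no argument for this statement; it quotes it from \cite{Martinez2018skew}, where the proof rests on Lam--Leroy P-independence theory.

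That theory is essentially your first justification of the combining step:
\begin{itemize}
\item nonzero kernel elements $x$ of $\Phi(F_{\alpha_i})$ give roots $\alpha_i x^{q^s-1}$ of $F$ in the conjugacy class of $\alpha_i$;
\item an $\F_q$-basis of the kernel gives a P-independent set, because the relevant centralizer is $\F_q$;
\item P-independent sets in distinct conjugacy classes have P-independent union;
\item a nonzero skew polynomial has at most $\deg F$ P-independent roots.
\end{itemize}

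Your CRT dimension count is a genuinely different route, and it is already complete on its own, so the conjugacy-class paragraph can be dropped. Set $R=\F_{q^m}[Y;q^s]$, $K_i=\ker\Phi(F_{\alpha_i})$ and $I=\{G\in R:\Phi(G_{\alpha_i})(K_i)=0 \text{ for all } i\}$. Then:
\begin{itemize}
\item $I$ is a left ideal, since $G\mapsto G_{\alpha_i}=G(\alpha_iY)$ is a ring endomorphism of $R$ and $\Phi$ is multiplicative;
\item $I$ contains $(H_\Lambda)$;
\item $\Phi_\alpha$ is surjective and left $\F_{q^m}$-linear, so $I/(H_\Lambda)\cong\prod_i\mathrm{Ann}(K_i)$, where $\mathrm{Ann}(K_i)\subseteq\tilde{\mc L}_{m,q^s}$ consists of the maps vanishing on $K_i$ and has $\F_{q^m}$-codimension $d_i$;
\item hence $I=RP$ with $\deg P=\dim_{\F_{q^m}}R/I=\sum_i d_i$.
\end{itemize}
Every nonzero element of $I$, in particular $F$, therefore has degree at least $\sum_i d_i$. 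No hypothesis $\deg F<\deg H_\Lambda$ is needed, so the obstacle you flagged is resolved by your own count.

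The Lam--Leroy route buys generality: it works over arbitrary division rings, which is the setting of \cite{Martinez2018skew}, and it gives finer information on root sets. Your route uses only linear algebra together with the isomorphism $\F_{q^m}[Y;q^s]/(H_\Lambda)\cong(\tilde{\mc L}_{m,q^s})^t$ already recorded in the paper.

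One detail to make explicit: the bound $\w_{SR}\geq tn-k+1\geq 1$ is also what shows that $F\mapsto(\mathrm{ev}_\beta(F_{\alpha_i}))_i$ is injective, so the code really has dimension $k$. This requires $k\leq tn$, which is implicit in the MSRD claim; the statement's $k\leq tm$ is only adequate when $n=m$.
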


The same result holds if the evaluation vector $\beta$ is changed in each block; however, for our purposes, we have stated a simpler version.

Finally, before concluding this section, we show a $q$-system associated with linearized Reed-Solomon codes; see \cite{santonastaso2022subspace}.

\begin{remark}
Consider for $i\in\{1,\ldots,t\}$ the $\mathbb{F}_q$-subspaces $S=\la\beta_1,\ldots,\beta_{n}\ra_{\mathbb{F}_q}\subseteq\mathbb{F}_{q^m}$ such that $\dim_{\mathbb{F}_q}(S)=n$ then
\begin{equation*}
    G=\begin{pmatrix}
        G_1&|&\ldots&|&G_t
    \end{pmatrix},
\end{equation*}
where, for every $i\in\{1,\ldots,t\}$,
\begin{equation*}
G_i=
    \begin{pmatrix}
        \beta_1&\ldots&\beta_{n}\\  \alpha_i(\beta_1)^{q^s}&\ldots&\alpha_i(\beta_{n})^{q^s}\\
        \vdots&&\vdots\\
        \alpha_i^{1+\ldots +q^{s(k-2)}}(\beta_1)^{q^{s(k-1)}}&\ldots&\alpha_i^{1+\ldots +q^{s(k-2)}}(\beta_{n})^{q^{s(k-1)}}
    \end{pmatrix}.
\end{equation*}
For every $i\in\{1,\ldots,t\}$ we can write the $q$-system associated with $G_i$ as
\begin{align*}
    \mc{U}_i&=\la(\beta_1,\mathrm{N}_s^1(\alpha_i){(\beta_1)}^{q^s},\ldots,\mathrm{N}_s^{k-1}(\alpha_i){(\beta_1)}^{q^{s(k-1)}}),\ldots,(\beta_{n},\mathrm{N}_s^1(\alpha_i){(\beta_{n})}^{q^s},\ldots,\mathrm{N}_s^{k-1}(\alpha_i){(\beta_{n})}^{q^{s(k-1)}})\ra_{\mathbb{F}_q}\\
    &=\{(x,\mathrm{N}_s^1(\alpha_i)x^{q^s},\ldots,\mathrm{N}_s^{k-1}(\alpha_i)x^{q^{s(k-1)}}):x\in S\}.
\end{align*}
Therefore, the $q$-system associated with $G$ is the tuple $(\mc{U}_1,\ldots,\mc{U}_t)$.
\end{remark}

\begin{remark}
Consider $\mc{U}=(\mc{U}_1,\ldots,\mc{U}_t)$ a linearized Reed-Solomon system and observe that, supposing $k\leq m$, every component $\mc{U}_i$ is equivalent to a Gabidulin system, since\footnote{Note that, with an abuse of notation, we always consider the matrix multiplication on the left. Hence elements of $\fqm^k$ will be converted to column vectors whereas after the multiplication these revert to row vectors. This principle will be used throughout this article.}
\begin{equation*}
    \mc{U}_i=\begin{pmatrix}
        1&0&\dots&0\\
        0&\mathrm{N}_s^1(\alpha_i)&&\vdots\\
        \vdots&&\ddots&0\\
        0&\dots&0&\mathrm{N}_s^{k-1}(\alpha_i)
    \end{pmatrix}G_{k,s,m}.
\end{equation*}
Here, the matrix with the norms is invertible, because every $\alpha_i$ is non-zero.
\end{remark}

\section{The stabilizer of the Gabidulin system}\label{sec:4}

Let both $S,T\subseteq\mathbb{F}_{q^m}$ be $\mathbb{F}_q$-subspaces of $\mathbb{F}_{q^m}$. Suppose that $s,s'\in\{1,\ldots,m\}$ such that $\gcd(s,m)=\gcd(s',m)=1$, and, $G_{k,s,m}[T]$ and $G_{k,s',m}[S]$ are two $q$-systems associated with Gabidulin codes. Observe that, as a consequence of Theorem \ref{th:corresp} and \cite[Theorem 4.8]{lunardon2018generalized}, we have the following.

\begin{theorem}\label{thm:Gabs-s}
    Let $1<k < m-1$. Suppose that $s$ and $s'$ are such that $\gcd(s,m)=\gcd(s',m)=1$. Then $G_{k,s,m}$ and $G_{k,s',m}$ are equivalent if and only if $s'\equiv\pm s\pmod{m}$.
    If $k \in \{1,m-1,m\}$, then $G_{k,s,m}$ and $G_{k,s',m}$ are equivalent for any $s$ and $s'$ coprime with $m$.
\end{theorem}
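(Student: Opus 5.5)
We plan to transfer the statement to codes through Theorem~\ref{th:corresp} and then use the known classification of Gabidulin codes up to equivalence, in the form of \cite[Theorem 4.8]{lunardon2018generalized}. Take $\alpha_1,\ldots,\alpha_m$ to be an $\F_q$-basis of $\F_{q^m}$. Then $G_{k,s,m}$ is the $q$-system of the full-length Gabidulin code $G_{k,s,m}[\boldsymbol{\alpha}]$, which is an $[m,k,m-k+1]_{q^m/q}$ MRD code. The same holds for $s'$. By Theorem~\ref{th:corresp}, $G_{k,s,m}$ and $G_{k,s',m}$ are equivalent as systems if and only if the two codes are equivalent, that is, if and only if they differ by $x\mapsto \gamma xA$ with $\gamma\in\F_{q^m}^*$ and $A\in\GL(m,q)$.

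For $1<k<m-1$ we would invoke \cite[Theorem 4.8]{lunardon2018generalized}, which classifies generalized Gabidulin codes: $\mathcal{G}_{k,s}$ and $\mathcal{G}_{k,s'}$ are equivalent exactly when $s'\equiv \pm s \pmod m$.

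We still check directly that $s'\equiv -s$ gives an equivalence, since this is the less obvious direction. The code is determined by the $\F_{q^m}$-space $\langle x,x^{q^s},\ldots,x^{q^{s(k-1)}}\rangle$ of $q^s$-polynomials. Composing on the right with $x^{q^{-s(k-1)}}$, which is an $\F_q$-linear bijection of $\F_{q^m}$ and so corresponds to some $A\in\GL(m,q)$, sends this space to $\langle x^{q^{-s(k-1)}},\ldots,x\rangle$. That is the space of $q^{-s}$-polynomials of $q^{-s}$-degree at most $k-1$.

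The converse direction is the real content. One approach is to compute the right idealiser, or the set of $q$-polynomials $f$ with $\mathcal{G}_{k,s}\circ f\subseteq \mathcal{G}_{k,s}$; another is to use the invariant $\dim(\mathcal{G}\cap \mathcal{G}^{q})=k-1$ under the appropriate twist. Either way, we would cite this rather than reprove it.

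For $k\in\{1,m-1,m\}$ we argue directly on the systems.
\begin{itemize}
\item If $k=1$, both systems equal $\F_{q^m}\subseteq\F_{q^m}^1$, so they coincide.
\item If $k=m$, each system spans $\F_{q^m}^m$ over $\F_{q^m}$ and has $\F_q$-dimension $m$, so it is the $\F_q$-span of an $\F_{q^m}$-basis. Any two such spans are mapped to one another by a matrix in $\GL(m,q^m)$.
\item If $k=m-1$, the code is MRD with minimum distance $2$, so its dual is a $1$-dimensional MRD code, i.e.\ $\langle v\rangle$ with $\w_R(v)=m$. Any two such codes are equivalent via some $A\in\GL(m,q)$. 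Duality respects equivalence under $x\mapsto\gamma xA$ (it is replaced by $x\mapsto\gamma^{-1}x(A^{-1})^{T}$), so the original codes are equivalent. Then Theorem~\ref{th:corresp} gives equivalence of the systems.
\end{itemize}

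The main obstacle is the \emph{only if} part for $1<k<m-1$. We rely entirely on \cite{lunardon2018generalized} for it, after checking that its notion of equivalence (semilinear or linear) matches the $\F_{q^m}$-linear equivalence used here. When $q$ is not prime, Frobenius automorphisms could a priori enlarge the equivalence; however, they map $\mathcal{G}_{k,s}$ to itself, so the linear and semilinear classifications agree in this case.
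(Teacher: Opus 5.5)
Your proposal is correct and is essentially the paper's argument: the paper also obtains the statement directly from Theorem~\ref{th:corresp} together with \cite[Theorem 4.8]{lunardon2018generalized}. Your explicit checks are sound details that the paper leaves implicit, namely the $s'\equiv -s$ direction (the code-side counterpart of Proposition~\ref{le:gab-s}) and the cases $k\in\{1,m-1,m\}$ via a basis-change argument and duality. For the ``only if'' direction you do not need the two equivalence notions to match: it suffices that $\F_{q^m}$-linear equivalence is a special case of the broader equivalence used in \cite{lunardon2018generalized}.
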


As a consequence, one can derive the number of inequivalent Gabidulin codes; see e.g. \cite[Theorem 5.6]{neri2020equivalence}.

\begin{corollary}\label{cor:numbinequiGabidulin}
    The number of inequivalent Gabidulin codes in $\fqm^m$ of dimension $k$ equals $\varphi(m)/2$ if $1<k<m-1$ and one otherwise.
\end{corollary}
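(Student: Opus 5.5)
Corollary~\ref{cor:numbinequiGabidulin} should follow by counting equivalence classes of the Gabidulin systems $G_{k,s,m}$ as $s$ ranges over the residues modulo $m$ coprime to $m$, using Theorem~\ref{thm:Gabs-s} together with the correspondence of Theorem~\ref{th:corresp}.

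The first step is to reduce the count to Gabidulin systems. A Gabidulin code in $\fqm^m$ is $G_{k,s,m}[\boldsymbol{\alpha}]$ with $n=m$, so $\alpha_1,\dots,\alpha_m$ form an $\fq$-basis of $\fqm$. Its associated $q$-system is therefore $G_{k,s,m}[\fqm]=G_{k,s,m}$, independently of $\boldsymbol{\alpha}$. By Theorem~\ref{th:corresp}, equivalence classes of these codes are in bijection with equivalence classes of the systems $G_{k,s,m}$. Hence the number of inequivalent Gabidulin codes of dimension $k$ equals the number of equivalence classes among $\{G_{k,s,m} : 1\le s\le m,\ \gcd(s,m)=1\}$. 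The parameter $s$ runs over $(\Z/m\Z)^\ast$, which has $\varphi(m)$ elements, since $G_{k,s,m}$ depends only on $s \bmod m$ because $x^{q^m}=x$.

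Next, for $1<k<m-1$, Theorem~\ref{thm:Gabs-s} says that $G_{k,s,m}\simeq G_{k,s',m}$ exactly when $s'\equiv \pm s \pmod m$. So the classes are the orbits of $(\Z/m\Z)^\ast$ under $s\mapsto -s$.

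The step needing care is checking that this involution has no fixed points. A fixed point would satisfy $s\equiv -s$, i.e.\ $m\mid 2s$; with $\gcd(s,m)=1$ this forces $m\mid 2$, i.e.\ $m\le 2$. But $1<k<m-1$ requires $m\ge 4$, so every orbit has size exactly $2$ and the count is $\varphi(m)/2$.

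Finally, for $k\in\{1,m-1,m\}$, Theorem~\ref{thm:Gabs-s} gives that all the $G_{k,s,m}$ are equivalent, so there is a single class. Within the range $1\le k\le m$ these two cases are exhaustive, which proves the corollary.
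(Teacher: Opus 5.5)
Your proposal is correct and follows the paper's route: the paper states the corollary as a direct consequence of Theorem~\ref{thm:Gabs-s} via the code--system correspondence, pointing to \cite[Theorem 5.6]{neri2020equivalence}, and gives no further proof. Your argument spells out exactly that deduction, including the check that $s\mapsto -s$ has no fixed points on $(\Z/m\Z)^\ast$ when $m\ge 4$, which is what makes the count exactly $\varphi(m)/2$.
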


The same result does not hold when considering $G_{k,s,m}[S]$ and $G_{k,s',m}[T]$ with $S$ and $T$ different from $\fqm$.
Indeed, already in the case in which $s=s'$, we can prove the following (where the first part can also be derived from Theorem \ref{th:connection} and \cite[Theorem 2]{berger2003isometries}).

\begin{theorem}
\label{th:eqgab}
Let $S,T\subseteq\mathbb{F}_{q^m}$ be two $\mathbb{F}_q$-subspaces of $\mathbb{F}_{q^m}$ and $s\in\{1,\ldots,m\}$ such that $\gcd(s,m)=1$. If $\dim_{\mathbb{F}_q}(S)=\dim_{\mathbb{F}_q}(T)>k$, then $G_{k,s,m}[S]$ and $G_{k,s,m}[T]$ are equivalent if and only if there exists $\alpha\in\mathbb{F}_{q^m}^*$ such that $T=\alpha S$.
In particular,
    \begin{equation*}
    \stab_{\mathrm{GL}(k,q^m)}(G_{k,s,m})=
    \left\{
    \begin{pmatrix}
d & 0 & \cdots & 0 \\
0 & d^{q^s} & \cdots & 0 \\
\vdots & \vdots & \ddots & \vdots \\
0 & 0 & \cdots & d^{q^{s(k-1)}}
\end{pmatrix}\,:\,d\in\F_{q^m}^*\right\}.
\end{equation*}
\end{theorem}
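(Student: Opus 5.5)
The easy direction is immediate. If $T=\alpha S$, then the diagonal matrix $D_\alpha=\mathrm{diag}(\alpha,\alpha^{q^s},\ldots,\alpha^{q^{s(k-1)}})\in\GL(k,q^m)$ maps $(x,x^{q^s},\ldots,x^{q^{s(k-1)}})$ to $((\alpha x),(\alpha x)^{q^s},\ldots,(\alpha x)^{q^{s(k-1)}})$. Hence $D_\alpha G_{k,s,m}[S]=G_{k,s,m}[T]$.

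For the converse, suppose $A=(a_{ij})\in\GL(k,q^m)$ satisfies $A\,G_{k,s,m}[S]=G_{k,s,m}[T]$. Then for every $x\in S$ there is a unique $y=f(x)\in T$ with $A(x,x^{q^s},\ldots,x^{q^{s(k-1)}})^{\top}=(y,y^{q^s},\ldots,y^{q^{s(k-1)}})^{\top}$. Uniqueness holds because the first coordinate determines the vector.

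Comparing the first row gives the $q^s$-polynomial
\[
f(x)=L_1(x)=\sum_{j=0}^{k-1}a_{1j}x^{q^{sj}}.
\]
It is $\F_q$-linear and bijective from $S$ onto $T$, since $A$ is invertible and the dimensions agree. Row $i$ gives $L_i(x)=\sum_j a_{ij}x^{q^{sj}}=f(x)^{q^{s(i-1)}}$ for all $x\in S$. Take $i=2$ and reduce modulo $x^{q^{sm}}-x$:
\[
P(x):=L_2(x)-L_1(x)^{q^s}=\sum_{j=0}^{k-1}a_{2j}x^{q^{sj}}-\sum_{j=0}^{k-1}a_{1j}^{q^s}x^{q^{s(j+1)}} .
\]
This is a $q^s$-polynomial of $q^s$-degree at most $k$ that vanishes on all of $S$. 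Since $\dim_{\F_q}S>k$, Theorem~\ref{thm:boundnumbroots} forces $P=0$ as a polynomial. This is the key point where the hypothesis $\dim S>k$ is used, and it also requires $k<m$ so that $x^{q^{sk}}$ does not wrap around to $x$ in $\tilde{\mc{L}}_{m,q^s}$; presumably $k<n\le m$ here.

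Comparing the coefficient of $x^{q^{sk}}$ in $P$ gives $a_{1,k-1}^{q^s}=0$. Inductively, I then want to show that every coefficient of $L_1$ except $a_{10}$ vanishes. The cleanest way is to exploit the top row of $A$ directly. Since $f^{q^{s(i-1)}}$ has $q^s$-degree $(i-1)+\deg_{q^s}L_1$ and must coincide on $S$ with a polynomial of degree $\le k-1$, apply the same root-counting to row $k$. Suppose $\deg_{q^s}L_1=e\ge1$. Then $L_1^{q^{s(k-1)}}-L_k$ is a nonzero polynomial: its $q^s$-degree is $k-1+e$ and its leading coefficient is nonzero. Reducing $k-1+e$ modulo $m$ may cause wrap-around when $k-1+e\ge m$, and handling this is the main obstacle. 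I would avoid it by working row by row: from row $2$, $e\le k-2$; from row $3$ applied to $L_2=L_1^{q^s}$ exactly, $e\le k-3$; and so on. At each step the degrees stay $\le k<m$, so there is no wrap-around, and finally $e=0$.

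Thus $f(x)=dx$ with $d=a_{10}\ne0$, and $T=f(S)=dS$. Moreover, each row identity becomes an exact polynomial identity $L_i(x)=d^{q^{s(i-1)}}x^{q^{s(i-1)}}$, so $A=\mathrm{diag}(d,d^{q^s},\ldots,d^{q^{s(k-1)}})$. Specializing to $S=T=\F_{q^m}$, which has dimension $m>k$, gives the stated stabilizer: every diagonal matrix of this form fixes $G_{k,s,m}$, since $d\F_{q^m}=\F_{q^m}$, and conversely every element of the stabilizer has this form.
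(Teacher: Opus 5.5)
Your proposal is correct and follows essentially the same argument as the paper. Both proofs proceed row by row. At each step the top surviving coefficient of the first row is killed by applying the root bound of Theorem~\ref{thm:boundnumbroots} to a difference polynomial of $q^s$-degree at most $k$, which vanishes on $S$ with $\dim_{\F_q}S>k$. The diagonal form of $A$ is then read off, and the stabilizer follows by taking $S=T=\F_{q^m}$. The only differences are cosmetic: you track $\deg_{q^s}L_1$ instead of the individual entries $a_{i,j}$, and you state explicitly that $k<\dim_{\F_q}S\le m$, which rules out any wrap-around.
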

\begin{proof}
By definition, $G_{k,s,m}[T]$ and $G_{k,s,m}[S]$ are equivalent if and only if there exists $A\in\GL(k,q^m)$ with the property that for every $x\in S$ there exists $t\in T$ such that
\begin{equation}
\label{eq:GLeq}
    A\begin{pmatrix}
        x\\
        x^{q^s}\\
        \vdots\\
        x^{q^{s(k-1)}}
    \end{pmatrix}=
    \begin{pmatrix}
        t\\
        t^{q^s}\\
        \vdots\\
        t^{q^{s(k-1)}}
    \end{pmatrix}.
\end{equation}
Let $\textbf{a}_1,\ldots,\textbf{a}_k$ be the rows of the matrix $A$.
Equation \eqref{eq:GLeq} can be written as follows,
\begin{equation}
\label{eq:system}
\begin{cases}
    \langle\textbf{a}_1,(x,\ldots,x^{q^{s(k-1)}})\rangle=t,\\
    \langle\textbf{a}_2,(x,\ldots,x^{q^{s(k-1)}})\rangle=t^{q^s},\\
    \qquad\qquad\vdots\\
    \langle\textbf{a}_k,(x,\ldots,x^{q^{s(k-1)}})\rangle=t^{q^{s(k-1)}},
\end{cases}
\end{equation}
where $\langle \cdot, \cdot \rangle$ denotes the standard inner product.
Our aim is to prove that $A$ has the following form
\begin{equation*}
A=\begin{pmatrix}
    a_{1,1}&\dots&0\\
    \vdots&\ddots&\vdots\\
    0& \dots &a_{1,1}^{q^{s(k-1)}}
\end{pmatrix}.
\end{equation*}
We prove this by induction on the rows of $A$. We start by considering the first two equations of \eqref{eq:system}, from which we derive
\begin{equation*}
    \la \textbf{a}_1^{q^s},(x^{q^s},\ldots,x^{q^{sk}})\ra-\la \textbf{a}_2,(x,\ldots,x^{q^{s(k-1)}})\ra=0.
\end{equation*}
This leads to the following expression
\begin{equation}\label{eq_Th4.3}
   a_{1,k}^{q^s}x^{q^{sk}}+\sum_{j=1}^{k-1}(a_{1,j}^{q^s}-a_{2,j+1})x^{q^{si}} - a_{2,1}x=0,
\end{equation}
for every $x \in S$.
Since $\dim_{\mathbb{F}_q}(S)>k$, by Theorem \ref{thm:boundnumbroots}, we have that both $a_{1,k}^{q^s}X^{q^{sk}}+\sum_{j=1}^{k-1}(a_{1,j}^{q^s}-a_{2,j+1})X^{q^{si}} - a_{2,1}X$ and polynomial \eqref{eq_Th4.3} are both the zero polynomial. Hence, we can conclude that
\begin{equation*}
    \begin{cases}
        a_{1,k}=0,\\
        a_{2,1}=0,\\
        a_{2,j}=a_{1,j-1}^{q^s},&j=2,\ldots,k
    \end{cases}
\end{equation*}
thus, the matrix $A$ is
\begin{equation*}
    \begin{pmatrix}
        a_{1,1}&a_{1,2}&\ldots&a_{1,k-1}&0\\
        0&a_{1,1}^{q^s}&\ldots&a_{1,k-2}^{q^s}&a_{1,k-1}^{q^s}\\
        \vdots&&\ldots&&\vdots\\
        a_{k,1}&&\ldots&&a_{k,k}
    \end{pmatrix}.
\end{equation*}
Now, suppose that for every $2\leq\bar{i}\leq i-1$ we have
\begin{equation*}
    \begin{cases}
        a_{1,k+2-\bar{i}}=0,\\
        a_{\bar i,j}=0,&j=1,\ldots\bar i-1\\
        a_{\bar i,j}=a_{1,j-\bar i +1}^{q^{s(\bar i-1)}},&j=\bar i,\ldots, k
    \end{cases},
\end{equation*}
i.e., the matrix $A$ is
\begin{equation*}
    \begin{pmatrix}
        a_{1,1}&a_{1,2}&\ldots&a_{1,k-i+1}&0&0&\ldots&0\\
        0&a_{1,1}^{q^s}&\ldots&a_{1,k-i}^{q^s}&a_{1,k-i+1}^{q^s}&0&\ldots&0\\
        \vdots&&&\ldots&&&&\vdots\\
        0&\ldots&0&a_{1,1}^{q^{s(i-2)}}&\ldots&&&a_{1,k-i+2}^{q^{s(i-2)}}\\
        a_{i,1}&&&&\ldots&&&a_{i,k}\\
        \vdots&&&&\ldots&&&\vdots\\        
        a_{k,1}&&&&\ldots&&&a_{k,k}
    \end{pmatrix}.
\end{equation*}

Substituting the first equation of System \eqref{eq:system} into the $i$-th one, we get
\begin{equation*}
    \la \textbf{a}_1^{q^{s(i-1)}},(x^{q^{s(i-1)}},\ldots,x^{q^{s(k-1)+s(i-1)}})\ra-\la \textbf{a}_i,(x,\ldots,x^{q^{s(k-1)}})\ra=0,
\end{equation*}
thus, we have
\begin{equation*}
   \sum_{j=1}^{k}a_{1,j}^{q^{s(i-1)}}x^{q^{s(i-1)+s(j-1)}}-\sum_{j=1}^{k}a_{i,j}x^{q^{s(j-1)}}=0.
\end{equation*}
Using the inductive hypothesis and simplifying, we conclude that
\begin{equation*}
   a_{1,k-i+2}^{q^{s(i-1)}}x^{q^{sk}}+\sum_{j=i}^{k}(a_{1,j-i+1}^{q^{s(i-1)}}-a_{i,j})x^{q^{{s(j-1)}}} -\sum_{j=1}^{i-1}a_{i,j}x^{q^{{s(j-1)}}}=0,
\end{equation*}
for every $x \in S$.
Arguing as before, we obtain the following conditions
\begin{equation*}
    \begin{cases}
        a_{1,k+2-i}=0,\\
        a_{i,j}=0,&j=1,\ldots,i-1\\
        a_{i,j}=a_{1,j-i +1}^{q^{s(i-1)}},&j=i,\ldots, k
    \end{cases}.
\end{equation*}
Iterating this argument, after the $k$ steps, we get that the matrix $A$ is
\begin{equation*}
A=\begin{pmatrix}
    a_{1,1}&\dots&0\\
    \vdots&\ddots&\vdots\\
    0& \dots &a_{1,1}^{q^{s(k-1)}}
\end{pmatrix}.
\end{equation*}
Hence, there exists $\alpha\in\mathbb{F}_{q^m}^*$ such that for every $x\in S$ there exists $t\in T$ with $\alpha s=t$, thus $\alpha S=T$. 

Now, suppose there exists $\alpha\in\mathbb{F}_{q^m}^*$ such that $\alpha S=T$, then for every $x\in S$ there exists $t\in T$ such that $\alpha x=t$ and we can write
\begin{equation*}
    \begin{pmatrix}
        \alpha x\\
        (\alpha x)^{q^s}\\
        \vdots\\
        (\alpha x)^{q^{s(k-1)}}
    \end{pmatrix}=
    \begin{pmatrix}
        t\\
        t^{q^s}\\
        \vdots\\
        t^{q^{s(k-1)}}
    \end{pmatrix},
\end{equation*}
so that $G_{k,s,m}[T]$ is equivalent to $G_{k,s,m}[S]$ via the matrix
\begin{equation*}
    A=\begin{pmatrix}
       \alpha &0&\ldots&0\\
       0&\alpha^{q^s}&\ldots&0\\
       \vdots&&\ddots&\vdots\\
       0&\ldots&0&\alpha^{q^{s(k-1)}}
    \end{pmatrix}.
\end{equation*}
The stabilizer is obtained by considering the maps between $G_{k,s,m}[S]$ and $G_{k,s,m}[T]$ with $S=T=\fqm$.
\end{proof}

When $k=m$, the stabilizer of $G_{m,s,m}$ in $\mathrm{GL}(m,q^m)$ is different and is isomorphic to $\mathrm{GL}(m,q)$.

\begin{theorem}
\label{th:eqgab=}
    Let $s\in\{1,\ldots,m\}$ be such that $(s,m)=1$. Then
    \begin{equation*}
    \stab_{\mathrm{GL}(m,q^m)}(G_{m,s,m}) =\left\{
    D_f \colon 
f\in\tilde{\mc{L}}_{m,q^s} \text{ and }
f \text{ is invertible}
\right\}.
    \end{equation*}
\end{theorem}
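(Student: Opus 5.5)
We first note that $G_{m,s,m}=\{v(x):x\in\fqm\}$ with $v(x)=(x,x^{q^s},\ldots,x^{q^{s(m-1)}})^T$. Since $\gcd(s,m)=1$, the Moore matrix whose columns are $v(\beta_1),\ldots,v(\beta_m)$, for an $\fq$-basis $\beta_1,\ldots,\beta_m$ of $\fqm$, is invertible. Hence $G_{m,s,m}$ is an $\fq$-form of $\fqm^m$: it is an $m$-dimensional $\fq$-subspace that spans $\fqm^m$ over $\fqm$, and any $\fq$-basis of it is an $\fqm$-basis of $\fqm^m$. Consequently every $\fq$-linear bijection $\psi$ of $G_{m,s,m}$ extends uniquely to an $\fqm$-linear map $A_\psi\in\GL(m,q^m)$. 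Conversely, every element of the stabilizer restricts to such a bijection. So the stabilizer is in bijection with $\GL(G_{m,s,m})\cong\GL(m,q)$. Since $x\mapsto v(x)$ is an $\fq$-isomorphism $\fqm\to G_{m,s,m}$, the $\fq$-bijections of $G_{m,s,m}$ are exactly $v(x)\mapsto v(f(x))$ with $f$ an invertible $\fq$-linear endomorphism of $\fqm$. By the isomorphism $\tilde{\mc L}_{m,q^s}\cong\mathrm{End}_{\fq}(\fqm)$ recalled in Section~\ref{sec:2}, these are exactly the invertible $f\in\tilde{\mc L}_{m,q^s}$.

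The remaining task is to identify the matrix $A_f$ with $D_f$. It suffices to check the identity $D_f\,v(x)=v(f(x))$ for all $x\in\fqm$. Uniqueness of the $\fqm$-linear extension, which follows from $G_{m,s,m}$ spanning $\fqm^m$, then gives $A_f=D_f$. Write $f=\sum_{j=0}^{m-1}a_jx^{q^{sj}}$. The $(i+1)$-th row of $D_f$ is $(a_{m-i}^{q^{si}},\ldots,a_{m-1}^{q^{si}},a_0^{q^{si}},\ldots,a_{m-1-i}^{q^{si}})$, indices taken mod $m$. Its product with $v(x)$ is $\sum_j a_j^{q^{si}}x^{q^{s(i+j)}}$, using $x^{q^{sm}}=x$ to reindex cyclically, and this equals $f(x)^{q^{si}}$. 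Hence $D_fv(x)=v(f(x))\in G_{m,s,m}$. For invertible $f$ the map $x\mapsto f(x)$ is a bijection, so $D_f G_{m,s,m}=G_{m,s,m}$. Moreover $D_f$ is invertible, by the stated fact that $\operatorname{rk}(D_f)=\dim_{\fq}\im(f)$. This proves the inclusion $\supseteq$.

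For $\subseteq$, take $A$ in the stabilizer. The map $x\mapsto v^{-1}(A v(x))$ is a well-defined $\fq$-linear bijection $f$ of $\fqm$, because $A$ is $\fqm$-linear, hence $\fq$-linear, and maps $G_{m,s,m}$ onto itself. We regard $f$ as an element of $\tilde{\mc L}_{m,q^s}$. Then $A$ and $D_f$ agree on $G_{m,s,m}$, which spans $\fqm^m$, so $A=D_f$.

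The only potentially delicate step is the index bookkeeping in $D_fv(x)=v(f(x))$, namely matching the cyclic shift in the rows of the Dickson matrix with the reduction modulo $x^{q^{sm}}-x$. The spanning property of the Moore matrix is standard for $\gcd(s,m)=1$; it also follows from Theorem~\ref{thm:boundnumbroots}, since a nonzero $q^s$-polynomial of $q^s$-degree less than $m$ cannot vanish on all of $\fqm$.
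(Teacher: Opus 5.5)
Your proof is correct and is essentially the paper's argument. The paper also takes $f$ to be the $q^s$-polynomial read off from the first row of $A$, which is your $v^{-1}\circ A\circ v$, and identifies the remaining rows by comparing coefficients of $q^s$-polynomials of $q^s$-degree less than $m$ on $\fqm$. That comparison rests on the same injectivity fact as your ``$G_{m,s,m}$ spans $\fqm^m$'' uniqueness step. Your version is also more complete: it checks the inclusion $\supseteq$ via the identity $D_f v(x)=v(f(x))$, and it establishes that $f$ is invertible, both of which the paper leaves implicit.
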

\begin{proof}
An element $A\in\GL(m,q^m)$ is in $\stab_{\mathrm{GL}(k,q^m)}(G_{m,s,m})$ if and only if for every $x\in \F_{q^m}$ there exists $t\in \F_{q^m}$ that satisfies
\begin{equation*}
    A\begin{pmatrix}
        x\\
        x^{q^s}\\
        \vdots\\
        x^{q^{s(m-1)}}
    \end{pmatrix}=
    \begin{pmatrix}
        t\\
        t^{q^s}\\
        \vdots\\
        t^{q^{s(m-1)}}
    \end{pmatrix}.
\end{equation*}
As before, the above condition is equivalent to the following system
\begin{equation*}
\begin{cases}
    \langle\textbf{a}_1,(x,\ldots,x^{q^{s(m-1)}})\rangle=t,\\
    \langle\textbf{a}_2,(x,\ldots,x^{q^{s(m-1)}})\rangle=t^{q^s},\\
    \qquad\qquad\vdots\\
    \langle\textbf{a}_m,(x,\ldots,x^{q^{s(m-1)}})\rangle=t^{q^{s(m-1)}}
,\end{cases}
\end{equation*}
where $\mathbf{a}_1,\ldots,\mathbf{a}_m$ are the rows of $A$.
Let $\mathbf{a}_1=(a_0,\ldots,a_{m-1})$, then
\begin{equation*}
    t=\sum_{k=0}^{m-1}a_kx^{q^{sk}},
\end{equation*}
by plugging the expression of $t$ we obtain

\begin{equation*}
    \sum_{k=0}^{m-1}a_{k}^{q^{s(i-1)}}x^{q^{s(k+i-1)}}=\sum_{j=0}^{m-1}a_{i,j+1}x^{q^{sj}},
\end{equation*}
for every $i\in\{1,\ldots,m\}$.
The above condition can be rewritten as follows
\begin{equation*}
    \sum_{j=i-1}^{m+i-2}a_{j-i+1}^{q^{s(i-1)}}x^{q^{sj}}=\sum_{j=0}^{m-1}a_{i,j+1}x^{q^{sj}},
\end{equation*}
and considering the relation $x^{q^{s(m+l)}}=x^{q^{sl}}$ for every $l\geq0$ and $x \in \fqm$, we can rewrite the previous equation as
\begin{equation*}
    \sum_{j=0}^{i-2}a_{m+j-i+1}^{q^{s(i-1)}}x^{q^{sj}}+\sum_{j=i-1}^{m-1}a_{j-i+1}^{q^{s(i-1)}}x^{q^{sj}}=\sum_{j=0}^{m-1}a_{i,j+1}x^{q^{sj}}
\end{equation*}
thus,
\begin{equation*}
    \begin{cases}
        a_{i,j+1}=a_{m+j-i+1}^{q^{s(i-1)}}\quad j\in\{0,\ldots,i-2\},\\
        a_{i,j+1}=a_{j-i+1}^{q^{s(i-1)}}\quad j\in\{i-1,\ldots,m-1\},
    \end{cases}
\end{equation*}
for every $i\in\{2,\ldots,m\}$. Therefore, $A=D_f$, where $f=a_0x+\ldots+a_{m-1}x^{q^{s(m-1)}}\in\tilde{\mc{L}}_{m,q^s}$.
\end{proof}

Now let us consider $s' \equiv -s \pmod m$. In this case, it is readily verified that the following hold.

\begin{proposition}
\label{le:gab-s}
    Let $s\in\{1,\ldots,m\}$ be such that $\gcd(s,m)=1$ and let $T\subseteq\F_{q^m}$ be a subspace of $\fqm$, then
    \begin{equation*}
        G_{k,-s,m}[T]= \begin{pmatrix}
        0&\ldots&1\\
        \vdots&\iddots&\vdots\\
        1&\ldots&0
    \end{pmatrix}G_{k,s,m}[T^{q^{-s(k-1)}}].
    \end{equation*}
\end{proposition}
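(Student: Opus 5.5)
Both sides of the proposition are sets of points of $\F_{q^m}^k$, so I will prove the identity by comparing elements directly. Under the paper's convention the anti-identity matrix $J$ (ones on the anti-diagonal) acts on column vectors and reverses the order of the coordinates. It is an involution in $\GL(k,q^m)$, and it sends an $\F_q$-subspace bijectively onto an $\F_q$-subspace. Hence it suffices to show that, for every $y\in T^{q^{-s(k-1)}}$, the vector $J(y,y^{q^s},\ldots,y^{q^{s(k-1)}})^\top$ lies in $G_{k,-s,m}[T]$, and that every element of $G_{k,-s,m}[T]$ arises in this way.

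The key step is a reparametrization. Write $\theta$ for the field automorphism $x\mapsto x^{q^{s(k-1)}}$. It is $\F_q$-linear and bijective on $\F_{q^m}$, so $T^{q^{-s(k-1)}}=\theta^{-1}(T)$ is an $\F_q$-subspace of the same dimension as $T$, and $y$ ranges over it exactly when $x:=y^{q^{s(k-1)}}$ ranges over $T$. For such a pair I will show the identity
\[
J\begin{pmatrix} y\\ y^{q^s}\\ \vdots\\ y^{q^{s(k-1)}}\end{pmatrix}=\begin{pmatrix} y^{q^{s(k-1)}}\\ \vdots\\ y^{q^s}\\ y\end{pmatrix}=\begin{pmatrix} x\\ x^{q^{-s}}\\ \vdots\\ x^{q^{-s(k-1)}}\end{pmatrix}.
\]
This holds because the $j$-th entry of the reversed vector is $y^{q^{s(k-j)}}=\bigl(y^{q^{s(k-1)}}\bigr)^{q^{-s(j-1)}}=x^{q^{-s(j-1)}}$ for $j=1,\ldots,k$.

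The right-hand side is exactly the generic element of $G_{k,-s,m}[T]$, reading $q^{-s}$ as $q^{m-s}$ (note that $\gcd(m-s,m)=1$). Since $\theta$ is a bijection, both inclusions follow at once.

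I do not expect any real obstacle here. The only points to state carefully are the exponent bookkeeping modulo $m$, so that $x^{q^{-s}}=x^{q^{s(m-1)}}$ is well defined on $\F_{q^m}$, and the left-multiplication convention from the footnote, so that $J$ genuinely reverses the coordinates.
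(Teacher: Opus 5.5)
Your proposal is correct and is essentially the paper's own argument. The paper writes no proof at all and only remarks that the identity is ``readily verified''. Your reparametrization $x=y^{q^{s(k-1)}}$, together with the entrywise check $y^{q^{s(k-j)}}=x^{q^{-s(j-1)}}$, supplies exactly that direct verification.
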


\begin{remark}
By the previous proposition, when considering Gabidulin systems, we have that $G_{k,-s,m}$ is equivalent to $G_{k,s,m}$ and so, when studying the equivalence of $G_{k,s,m}$, we may just consider the case where $s<m/2$.
\end{remark}

\section{Equivalence classes of linearized Reed-Solomon codes}\label{sec:5}

In this section, we focus on characterizing equivalent linearized Reed-Solomon codes, which will be essential to count the number of inequivalent classes of linearized Reed-Solomon codes. Our approach is geometric in nature, hence we will count the number of inequivalent  systems associated with linearized Reed-Solomon codes.
To this aim, we consider $(\mc{U}_1,\ldots,\mc{U}_t)$ and $(\mc{V}_1,\ldots,\mc{V}_t)$ two $[\mathbf{m},k]_{q^m/q}$-systems associated with linearized Reed-Solomon codes, where $\mathbf{m}=(m,\ldots,m)$, i.e. for every $i, j \in \{1,\ldots t\}$
\begin{align*}
    \mc{U}_i&=\{(x,\mathrm{N}_s^1(\alpha_i)x^{q^s},\ldots,\mathrm{N}_s^{k-1}(\alpha_i)x^{q^{s(k-1)}})\,:\,x\in\mathbb{F}_{q^m}\}\\
    \mc{V}_j&=\{(y,\mathrm{N}_{s'}^1(\beta_j)y^{q^{s'}},\ldots,\mathrm{N}_{s'}^{k-1}(\beta_j)y^{q^{s'(k-1)}})\,:\,y\in\mathbb{F}_{q^m}\},
\end{align*}
for some $s,s' \in \{1,\ldots,m\}$ with $\gcd(s,m)=\gcd(s',m)=1$ and both $\{\alpha_1,\ldots,\alpha_t\}$ and $\{\beta_1,\ldots,\beta_t\}$ subsets of $\fqm$ with pairwise distinct nonzero norms over $\fq$.

\begin{remark}\label{rem:equivGksm}
    We observe that all elements of the $q$-systems $(\mc{U}_1,\ldots,\mc{U}_t)$ and $(\mc{V}_1,\ldots,\mc{V}_t)$ are equivalent to the Gabibulin $q$-systems, that is, $\mc{U}_i$ is equivalent to $G_{k,s,m}$ and $\mc{V}_j$ is equivalent to $G_{k,s',m}$ for every $i$ and $j$.
\end{remark}

As a consequence of the above remark and Theorem \ref{thm:Gabs-s} we have the following.

\begin{proposition}\label{prop:s<m/2}
    Let $k\leq m$ and let $s,s' \in \{1,\ldots,m\}$ with $\gcd(s,m)=\gcd(s',m)=1$ and $\{\alpha_1,\ldots,\alpha_t\}$ and $\{\beta_1,\ldots,\beta_t\}$ subsets of $\fqm$ whose elements have pairwise distinct nonzero norms over $\fq$.
    Consider $(\mc{U}_1,\ldots,\mc{U}_t)$ and $(\mc{V}_1,\ldots,\mc{V}_t)$ where
\begin{align*}
    \mc{U}_i&=\{(x,\mathrm{N}_s^1(\alpha_i)x^{q^s},\ldots,\mathrm{N}_s^{k-1}(\alpha_i)x^{q^{s(k-1)}})\,:\,x\in\mathbb{F}_{q^m}\}\\
    \mc{V}_j&=\{(y,\mathrm{N}_{s'}^1(\beta_j)y^{q^{s'}},\ldots,\mathrm{N}_{s'}^{k-1}(\beta_j)y^{q^{s'(k-1)}})\,:\,y\in\mathbb{F}_{q^m}\},
\end{align*}
for every $i, j \in \{1,\ldots t\}$.
If $(\mc{U}_1,\ldots,\mc{U}_t)$ and $(\mc{V}_1,\ldots,\mc{V}_t)$ are equivalent, then $s'\equiv \pm s \pmod{m}$.
\end{proposition}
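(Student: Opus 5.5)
Assume the two systems are equivalent. By the remark on equivalence of sum-rank systems, there exist $A\in\GL(k,q^m)$, constants $\gamma_1,\ldots,\gamma_t\in\F_{q^m}^*$ and a permutation $\sigma\in\Sym(t)$ such that $\mc{U}_i=\gamma_i\mc{V}_{\sigma(i)}A$ for every $i$. In particular, for $i=1$ we obtain the single-block equivalence $\mc{U}_1=\gamma_1\mc{V}_{\sigma(1)}A$. Multiplication by the scalar $\gamma_1$ is itself an $\F_{q^m}$-linear isomorphism of $\F_{q^m}^k$, namely $\gamma_1 I_k$. Hence $\mc{U}_1$ and $\mc{V}_{\sigma(1)}$ are equivalent as $q$-systems in the rank-metric sense, via the matrix $\gamma_1 A\in\GL(k,q^m)$.

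Next I use Remark~\ref{rem:equivGksm}, made explicit by the diagonal-norm matrix displayed earlier. Writing $D_{\alpha_1}=\mathrm{diag}(1,\mathrm{N}_s^1(\alpha_1),\ldots,\mathrm{N}_s^{k-1}(\alpha_1))$, we have $\mc{U}_1=D_{\alpha_1}G_{k,s,m}$. Similarly $\mc{V}_{\sigma(1)}=D'_{\beta_{\sigma(1)}}G_{k,s',m}$, with the analogous diagonal matrix of $s'$-truncated norms. These matrices are invertible because all the $\alpha_i,\beta_j$ are nonzero. Composing the three invertible matrices shows that $G_{k,s,m}$ and $G_{k,s',m}$ are equivalent $q$-systems.

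Finally I invoke Theorem~\ref{thm:Gabs-s}. If $1<k<m-1$, equivalence of $G_{k,s,m}$ and $G_{k,s',m}$ forces $s'\equiv\pm s\pmod m$, which is the claim.

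The main obstacle is the range of $k$. For $k\in\{1,m-1,m\}$, Theorem~\ref{thm:Gabs-s} says all these Gabidulin systems are equivalent, so this argument gives no restriction on $s'$. I would handle these cases as follows.
\begin{itemize}
\item For $k=1$, both systems are simply $\F_{q^m}$ in every block, so the exponent $s$ does not appear in the system at all. Here I would check whether the authors intend the statement to cover $k=1$, and otherwise restrict to $1<k<m-1$ as in Theorem~\ref{thm:Gabs-s}.
\item For $k=m-1$ and $k=m$, the single-block reduction loses information. I would then use Theorem~\ref{th:eqgab=}, which gives the stabilizer as Dickson matrices, together with the fact that the \emph{same} $A$ must work simultaneously for all $t$ blocks. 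I would try to show that the norm data then pins down $s'\equiv\pm s\pmod m$. If this fails, I would state the proposition for $1<k<m-1$, or note that in these degenerate cases it holds only trivially.
\end{itemize}
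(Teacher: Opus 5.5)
Your main argument is exactly the paper's proof: restrict to one block, absorb $\gamma_1$ into the matrix, strip off the diagonal truncated-norm matrices to get $G_{k,s,m}\sim G_{k,s',m}$, and apply Theorem~\ref{thm:Gabs-s}. Your worry about the range of $k$ is justified. It points to a defect in the statement, not in your argument. The paper applies Theorem~\ref{thm:Gabs-s} without noting that it yields $s'\equiv\pm s$ only for $1<k<m-1$, so its proof also covers only that range.

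Outside that range the claim genuinely fails. For $k=1$ every block equals $\F_{q^m}$. For $t=1$ and $k\in\{m-1,m\}$ one has $\mc U_1\sim G_{k,s,m}\sim G_{k,s',m}\sim\mc V_1$. In both cases $m=5$, $s=1$, $s'=2$ is a counterexample. So the cases are not true ``only trivially'', and no argument can rescue them in general. The right fix is to assume $1<k<m-1$. This is also the only range in which the paper actually uses the proposition (Section~\ref{sec:6}).

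That said, for $t\ge 2$ and $k=m$ your idea of exploiting the common matrix $A$ does succeed. A cleaner tool than Dickson matrices is the following invariant.
\begin{itemize}
\item Write $\mc U_i=M_i\F_q^m$ with $M_i\in\GL(m,q^m)$, put $P=M_1^{-1}M_2$, and let $X^{(q^s)}$ denote entrywise $q^s$-th powers.
\item The conjugacy class of $P^{-1}P^{(q^s)}$, up to $\F_{q^m}^*$-scalars and inversion, is an invariant of the pair $(\mc U_1,\mc U_2)$. A common $A$ leaves $P$ unchanged; the $\gamma_i$ rescale it; changing $\F_q$-bases in the blocks conjugates it by $\GL(m,q)$; swapping the blocks inverts it up to conjugacy.
\item Take $M_i=D_{\alpha_i}M$ with $M$ a Moore matrix. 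Entrywise $q^s$-powering cyclically shifts the rows of the $q^{s'}$-Moore matrix by $r$, where $s\equiv rs'\pmod m$.
\item For parameter $s$ this gives a scalar multiple of $\mathrm{diag}(\nu,1,\ldots,1)$. For parameter $s'$ it gives a scalar multiple of $\mathrm{diag}(\nu',\ldots,\nu',1,\ldots,1)$ with $r$ entries $\nu'$. Here $\nu,\nu'\neq 1$ are ratios of distinct norms.
\item Comparing eigenvalue multiplicities forces $r\equiv\pm1$.
\end{itemize}
This goes beyond what the paper proves. It also suggests that the factor $\Psi(m,m)=1$ used in Section~\ref{sec:6} should be revisited when $t\ge 2$.
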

\begin{proof}
    If $(\mc{U}_1,\ldots,\mc{U}_t)$ and $(\mc{V}_1,\ldots,\mc{V}_t)$ are equivalent, then by definition there exist $A\in\GL(k,q^m),\,\sigma\in \Sym(t),\,(\gamma_1,\ldots,\gamma_t)\in(\mathbb{F}_{q^m}^*)^t$ such that $\mc{U}_i=\gamma_iA\mc{V}_{\sigma(i)}$ for every $i$.
    By Remark \ref{rem:equivGksm}, we have that $\mc{U}_i$ is equivalent to $G_{k,s,m}$ and $\mc{V}_j$ is equivalent to $G_{k,s',m}$ for every $i$ and $j$.
    As a consequence, we derive that $G_{k,s,m}$ and $G_{k,s',m}$ are equivalent and therefore by Theorem \ref{thm:Gabs-s} it follows that $s'\equiv \pm s \pmod{m}$.
\end{proof}

Now, as for the Gabidulin codes, we will show that we can always assume that we can restrict ourselves to the case $s<m/2$.

\begin{theorem}
Let $k\leq m$ and let $s \in \{1,\ldots,m\}$ with $\gcd(s,m)=1$ and $\{\alpha_1,\ldots,\alpha_t\}$ a subset of $\fqm$ whose elements have pairwise distinct nonzero norms over $\fq$.
Let $(\mc{U}^{-s}_1,\ldots,\mc{U}^{-s}_t)$ and $(\mc{U}^{s}_1,\ldots,\mc{U}^{s}_t)$ be defined as
\begin{align*}
    \mc{U}^{-s}_i&=\{(x,\mathrm{N}_{-s}^1(\alpha_i)x^{q^{-s}},\ldots,\mathrm{N}_{-s}^{k-1}(\alpha_i)x^{q^{-s(k-1)}})\,:\,x\in\mathbb{F}_{q^m}\},\\
    \mc{U}^{s}_i&=\{(x,\mathrm{N}_s^1((\alpha_i^{-1})^{q^{-s(k-2)}})x^{q^s},\ldots,\mathrm{N}_s^{k-1}((\alpha_i^{-1})^{q^{-s(k-2)}})x^{q^{s(k-1)}})\,:\,x\in\mathbb{F}_{q^m}\},
\end{align*}
for every $i\in\{1,\ldots,t\}$, then $(\mc{U}^{-s}_1,\ldots,\mc{U}^{-s}_t)$ is equivalent to $(\mc{U}^{s}_1,\ldots,\mc{U}^{s}_t)$.
\end{theorem}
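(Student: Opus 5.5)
The plan is to show that the anti-diagonal permutation matrix of Proposition~\ref{le:gab-s} realizes the equivalence blockwise, up to a scalar $\gamma_i\in\F_{q^m}^*$ in each block, with $\sigma$ the identity. Write
\[
J=\begin{pmatrix} 0&\ldots&1\\ \vdots&\iddots&\vdots\\ 1&\ldots&0\end{pmatrix}\in\GL(k,q^m)
\]
and $\beta_i:=(\alpha_i^{-1})^{q^{-s(k-2)}}$. The goal is the identity
\[
\mc{U}^{-s}_i=\mathrm{N}_{-s}^{k-1}(\alpha_i)\,J\,\mc{U}^{s}_i \qquad\text{for every } i\in\{1,\ldots,t\},
\]
which gives the equivalence with $A=J$ and $\gamma_i=\mathrm{N}_{-s}^{k-1}(\alpha_i)$. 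This scalar is nonzero because $\alpha_i\neq 0$.

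\textbf{Step 1: reindex $\mc{U}^{-s}_i$.} Index the coordinates of $\mc{U}^{-s}_i$ by $j=0,\ldots,k-1$, so the $j$-th entry is $\mathrm{N}_{-s}^j(\alpha_i)x^{q^{-sj}}$. Reverse the order by setting $r=k-1-j$. Then substitute $x=y^{q^{s(k-1)}}$; this is a bijection of $\F_{q^m}$, so the set is unchanged. After the substitution, $x^{q^{-sj}}=y^{q^{sr}}$. Hence $J$ applied to $\mc{U}^{-s}_i$ is the set of vectors whose $r$-th entry is $c_r\,y^{q^{sr}}$, where
\[
c_r=\mathrm{N}_{-s}^{k-1-r}(\alpha_i)=\prod_{u=0}^{k-2-r}\alpha_i^{q^{-su}}.
\]

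\textbf{Step 2: compute $\mathrm{N}_s^r(\beta_i)$.} Write out
\[
\mathrm{N}_s^r(\beta_i)=\prod_{l=0}^{r-1}\alpha_i^{-q^{s(l-k+2)}}
\]
and substitute $u=k-2-l$. This gives
\[
\mathrm{N}_s^r(\beta_i)=\prod_{u=k-1-r}^{k-2}\alpha_i^{-q^{-su}}.
\]
Splitting the full product $\prod_{u=0}^{k-2}\alpha_i^{q^{-su}}=\mathrm{N}_{-s}^{k-1}(\alpha_i)$ at $u=k-1-r$ then yields
\[
c_r=\mathrm{N}_{-s}^{k-1}(\alpha_i)\,\mathrm{N}_s^r(\beta_i)\qquad\text{for all } r=0,\ldots,k-1.
\]
At $r=0$ this reads $c_0=\mathrm{N}_{-s}^{k-1}(\alpha_i)$, consistent with the empty-product convention $\mathrm{N}_s^0=1$.

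\textbf{Step 3: conclude.} By Step 2, $J\mc{U}^{-s}_i=\mathrm{N}_{-s}^{k-1}(\alpha_i)\,\mc{U}^s_i$. Since $J$ is an involution, this is the displayed identity above, and the two $t$-tuples are equivalent by the definition of equivalent systems.

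The main obstacle is the exponent bookkeeping in Step 2: correctly matching the truncated $(-s)$-norms against the $s$-norms of $\beta_i$ under the reindexing $u=k-2-l$, including the boundary terms at $r=0$ and $r=k-1$. Everything else is immediate. One could also note that the $\beta_i$ have pairwise distinct nonzero norms, since $\mathrm{N}_s(\beta_i)=\mathrm{N}_s(\alpha_i)^{-1}$, so the right-hand side is again a linearized Reed--Solomon system. This is not needed for the equivalence itself.
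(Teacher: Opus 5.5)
Your proof is correct and follows essentially the paper's argument. Like the paper, you prove $\mc{U}^{-s}_i=\mathrm{N}_{-s}^{k-1}(\alpha_i)\,J\,\mc{U}^s_i$ with the anti-diagonal matrix $J$, $\sigma=\mathrm{id}$ and $\gamma_i=\mathrm{N}_{-s}^{k-1}(\alpha_i)$, via the same telescoping identity $\mathrm{N}_{-s}^{k-1}(\alpha_i)\mathrm{N}_{-s}^{j}(\alpha_i^{-1})\mathrm{N}_s^{k-1-j}((\alpha_i^{-1})^{q^{-s(k-2)}})=1$; the only cosmetic difference is that the paper factors through the diagonal norm matrices and $JG_{k,s,m}=G_{k,-s,m}$ (Proposition~\ref{le:gab-s}), whereas your substitution $x=y^{q^{s(k-1)}}$ does that step directly.
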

\begin{proof}
We show that
\begin{equation}
\label{eq:anti1} 
    \mathrm{N}_{-s}^{k-1}(\alpha_i)\begin{pmatrix}
                0&\ldots&1\\
        \vdots&\ddots&\vdots\\
        1&\ldots&0
    \end{pmatrix}\mc{U}_i^s=\mc{U}_i^{-s}
\end{equation}
for every $i\in\{1,\ldots,t\}$, or equivalently
\begin{equation}
\label{eq:anti2}
\begin{pmatrix}
        1&\dots&0\\
        \vdots&\ddots&\vdots\\
        0&\dots&\mathrm{N}_{-s}^{k-1}(\alpha_i^{-1})
    \end{pmatrix}\mathrm{N}_{-s}^{k-1}(\alpha_i)\begin{pmatrix}
                0&\ldots&1\\
        \vdots&\ddots&\vdots\\
        1&\ldots&0
    \end{pmatrix}
    \begin{pmatrix}
        1&\dots&0\\
        \vdots&\ddots&\vdots\\
        0&\dots&\mathrm{N}_s^{k-1}((\alpha_i^{-1})^{q^{-s(k-2)}})
    \end{pmatrix}G_{k,s,m}=G_{k,-s,m},
\end{equation}
for every $i\in\{1,\ldots,t\}$.
Observe that the matrix
\begin{equation*}
\begin{pmatrix}
        1&\dots&0\\
        \vdots&\ddots&\vdots\\
        0&\dots&\mathrm{N}_{-s}^{k-1}(\alpha_i^{-1})
    \end{pmatrix}\mathrm{N}_{-s}^{k-1}(\alpha_i)\begin{pmatrix}
                0&\ldots&1\\
        \vdots&\ddots&\vdots\\
        1&\ldots&0
    \end{pmatrix}
    \begin{pmatrix}
        1&\dots&0\\
        \vdots&\ddots&\vdots\\
        0&\dots&\mathrm{N}_s^{k-1}((\alpha_i^{-1})^{q^{-s(k-2)}})
    \end{pmatrix} 
\end{equation*}
is the antidiagonal matrix with entries equal to 
\begin{equation*}
 \begin{aligned}
    \mathrm{N}_{-s}^{k-1}(\alpha_i)\mathrm{N}_{-s}^{j}(\alpha_i^{-1})\mathrm{N}_s^{k-j-1}((\alpha_i^{-1})^{q^{-s(k-2)}})&=\alpha_i^{1+\ldots+q^{-s(k-2)}-(1+\ldots+q^{-s(j-1)}+q^{-s(k-2)}(1+\ldots+q^{s(k-j-2)}))}\\
    &=\alpha_i^{1+\ldots+q^{-s(k-2)}-(1+\ldots+q^{-s(k-2)})}=1.
\end{aligned}   
\end{equation*}

Thus, 
\[ \begin{pmatrix}
        1&\dots&0\\
        \vdots&\ddots&\vdots\\
        0&\dots&\mathrm{N}_{-s}^{k-1}(\alpha_i^{-1})
    \end{pmatrix}\mathrm{N}_{-s}^{k-1}(\alpha_i)\begin{pmatrix}
                0&\ldots&1\\
        \vdots&\ddots&\vdots\\
        1&\ldots&0
    \end{pmatrix}
    \begin{pmatrix}
        1&\dots&0\\
        \vdots&\ddots&\vdots\\
        0&\dots&\mathrm{N}_s^{k-1}((\alpha_i^{-1})^{q^{-s(k-2)}})
    \end{pmatrix} = \begin{pmatrix}
                0&\ldots&1\\
        \vdots&\ddots&\vdots\\
        1&\ldots&0
    \end{pmatrix}\]
 hence, \eqref{eq:anti2}, and also \eqref{eq:anti1}, is verified.
\end{proof}

Therefore, inequivalent linearized Reed-Solomon codes/systems can be obtained from the family of linearized Reed-Solomon codes/systems in which $s< m/2$.

\begin{theorem}
\label{th:eqsets}
Let $k< m$ and let $s \in \{1,\ldots,m\}$ with $\gcd(s,m)=1$ and $\{\alpha_1,\ldots,\alpha_t\}$ and $\{\beta_1,\ldots,\beta_t\}$ be two subsets of $\fqm$ whose elements have pairwise distinct nonzero norms over $\fq$.
Let $(\mc{U}_1,\ldots,\mc{U}_t)$ and $(\mc{V}_1,\ldots,\mc{V}_t)$ be defined as
\begin{align*}
    \mc{U}_i&=\{(x,\mathrm{N}_s^1(\alpha_i)x^{q^s},\ldots,\mathrm{N}_s^{k-1}(\alpha_i)x^{q^{s(k-1)}})\,:\,x\in\mathbb{F}_{q^m}\},\\
    \mc{V}_j&=\{(y,\mathrm{N}_s^1(\beta_j)y^{q^{s}},\ldots,\mathrm{N}_s^{k-1}(\beta_j)y^{q^{s(k-1)}})\,:\,y\in\mathbb{F}_{q^m}\},
\end{align*}
for every $i,j \in \{1,\ldots,t\}$.
Then $(\mc{U}_1,\ldots,\mc{U}_t)$ and $(\mc{V}_1,\ldots,\mc{V}_t)$ are equivalent if and only if there exists $\xi\in\mathbb{F}_{q}^*$ such that $\{\N_s(\beta_j)\}_{j\in\{1,\ldots,t\}}=\{\xi \N_s(\alpha_i)\}_{i\in\{1,\ldots,t\}}$.
\end{theorem}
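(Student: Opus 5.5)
The plan is to reduce everything to the stabilizer of the Gabidulin system computed in Theorem~\ref{th:eqgab}. For $\alpha\in\F_{q^m}^*$ set $D_\alpha=\mathrm{diag}(\N_s^0(\alpha),\N_s^1(\alpha),\ldots,\N_s^{k-1}(\alpha))$, so that $\mc{U}_i=D_{\alpha_i}G_{k,s,m}$ and $\mc{V}_j=D_{\beta_j}G_{k,s,m}$, as in the remark closing Section~\ref{sec:3}. I will assume $2\le k<m$. For $k=1$ every $\mc{U}_i$ equals $\F_{q^m}$, so all these systems coincide, and that degenerate case has to be excluded or handled separately.

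\emph{Necessity.} Suppose the two systems are equivalent. Then there are $A\in\GL(k,q^m)$, $\sigma\in\Sym(t)$ and $\gamma_i\in\F_{q^m}^*$ with $\mc{U}_i=\gamma_iA\mc{V}_{\sigma(i)}$ for all $i$. Hence $D_{\alpha_i}^{-1}\gamma_iAD_{\beta_{\sigma(i)}}$ lies in $\stab_{\GL(k,q^m)}(G_{k,s,m})$. Since $\dim_{\F_q}\F_{q^m}=m>k$, Theorem~\ref{th:eqgab} applies and says this matrix equals $\mathrm{diag}(d_i,d_i^{q^s},\ldots,d_i^{q^{s(k-1)}})$ for some $d_i\in\F_{q^m}^*$. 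Consequently
\[
A=\gamma_i^{-1}D_{\alpha_i}\,\mathrm{diag}(d_i^{q^{sj}})_{j}\,D_{\beta_{\sigma(i)}}^{-1}
\]
is diagonal, and this holds for every $i$.

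The key step is to exploit that $A$ does not depend on $i$. Comparing the $(2,2)$ and $(1,1)$ entries of $A$ (this uses $k\ge 2$), the quotient
\[
c:=\frac{A_{2,2}}{A_{1,1}}=\frac{\alpha_i\,d_i^{q^s-1}}{\beta_{\sigma(i)}}
\]
must be the same for all $i$. Taking $\N_s$ of both sides and using $\N_s(d^{q^s-1})=1$ gives $\N_s(\beta_{\sigma(i)})=\N_s(c)^{-1}\N_s(\alpha_i)$ for every $i$. So $\xi:=\N_s(c)^{-1}\in\F_q^*$ satisfies $\{\N_s(\beta_j)\}_j=\{\xi\N_s(\alpha_i)\}_i$.

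\emph{Sufficiency.} Assume $\N_s(\beta_{\sigma(i)})=\xi\N_s(\alpha_i)$ for some bijection $\sigma$ and some $\xi\in\F_q^*$.
\begin{enumerate}[(1)]
\item By surjectivity of the norm, choose $c\in\F_{q^m}^*$ with $\N_s(c)=\xi^{-1}$.
\item The element $c\beta_{\sigma(i)}/\alpha_i$ then has norm $1$. By Hilbert's Theorem~90 for the generator $x\mapsto x^{q^s}$ of the Galois group (valid since $\gcd(s,m)=1$), there is $d_i\in\F_{q^m}^*$ with $d_i^{q^s-1}=c\beta_{\sigma(i)}/\alpha_i$.
\item A telescoping computation gives
\[
\N_s^j(\alpha_i)\,d_i^{q^{sj}}=d_i\,\N_s^j(\alpha_i d_i^{q^s-1})=d_i\,\N_s^j(c)\,\N_s^j(\beta_{\sigma(i)}).
\]
\item Substituting $x\mapsto d_ix$ in the description of $\mc{U}_i$ then yields $\mc{U}_i=d_i\,D_c\,\mc{V}_{\sigma(i)}$. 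Taking $A=D_c$, which does not depend on $i$, together with $\gamma_i=d_i$ and the permutation $\sigma$, exhibits the required equivalence.
\end{enumerate}

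The main obstacle is the necessity direction. It needs the stabilizer description of Theorem~\ref{th:eqgab} to be legitimately applicable, and the real content is identifying an $i$-independent invariant of $A$, namely the ratio of its first two diagonal entries, whose norm isolates $\xi$. The remaining steps are routine norm manipulations.
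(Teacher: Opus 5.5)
Your proof is correct and follows essentially the same route as the paper: you write $\mc{U}_i=D_{\alpha_i}G_{k,s,m}$, reduce to the diagonal stabilizer of Theorem~\ref{th:eqgab}, compare the first two diagonal entries of the $i$-independent matrix $A$ and take norms, and for the converse you use Hilbert's Theorem~90 with a diagonal $A$. The only difference is that the paper fixes $A=D_{\beta_{\sigma(1)}\alpha_1^{-1}}$, mapping $\mc{U}_1$ onto $\mc{V}_{\sigma(1)}$, whereas you allow any $c$ of the appropriate norm. Your caveat about $k=1$ is well founded: there every $\mc{U}_i$ equals $\F_{q^m}$, so the ``only if'' direction fails as stated, and the paper's argument also tacitly needs $k\ge 2$ so that there is a second diagonal entry to compare.
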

\begin{proof}
The $q$-systems $(\mc{U}_1,\ldots,\mc{U}_t)$ and $(\mc{V}_1,\ldots,\mc{V}_t)$ are equivalent if and only if 
there exist $A\in\GL(k,q^m),\,\sigma\in \Sym(t),\,(\gamma_1,\ldots,\gamma_t)\in(\mathbb{F}_{q^m}^*)^t$ such that $\mc{U}_i=\gamma_iA\mc{V}_{\sigma(i)}$ for every $i\in\{1,\ldots,t\}$, i.e. for every $x \in \fqm$ there exists $y \in \fqm$ such that
\begin{equation}
\label{eq:lineq}
    \gamma_i\begin{pmatrix}
        \textbf{a}_1\\
        \textbf{a}_2\\
        \vdots\\
        \textbf{a}_k
    \end{pmatrix}\begin{pmatrix}
        x\\
        \alpha_ix^{q^s}\\
        \vdots\\
        \alpha_i^{1+\ldots+q^{s(k-2)}}x^{q^{s(k-1)}}
    \end{pmatrix}=\begin{pmatrix}
        y\\
        \beta_{\sigma(i)}y^{q^s}\\
        \vdots\\
        \beta_{\sigma(i)}^{1+\ldots+q^{s(k-2)}}y^{q^{s(k-1)}}
    \end{pmatrix},
\end{equation}
for every $i\in\{1,\ldots,t\}$ and where $\mathbf{a}_1,\ldots,\mathbf{a}_k$ are the rows of $A$.
Using that
\begin{equation*}
    \mc{U}_i=\begin{pmatrix}
        1&0&\dots&0\\
        0&\alpha_i&&\vdots\\
        \vdots&&\ddots&0\\
        0&\dots&0&\alpha_i^{1+\ldots+q^{s(k-2)}}
    \end{pmatrix}G_{k,s,m}
\end{equation*}
and
\begin{equation*}
    \mc{V}_{\sigma(i)}=\begin{pmatrix}
        1&0&\dots&0\\
        0&\beta_{\sigma(i)}&&\vdots\\
        \vdots&&\ddots&0\\
        0&\dots&0&\beta_{\sigma(i)}^{1+\ldots+q^{s(k-2)}}
    \end{pmatrix}G_{k,s,m},
\end{equation*}
we find that \eqref{eq:lineq} is equivalent to 
\begin{equation*}
    \begin{pmatrix}
        1&0&\dots&0\\
        0&\beta_{\sigma(i)}^{-1}&&\vdots\\
        \vdots&&\ddots&0\\
        0&\dots&0&(\beta_{\sigma(i)}^{-1})^{1+\ldots+q^{s(k-2)}}
    \end{pmatrix}\gamma_iA\begin{pmatrix}
        1&0&\dots&0\\
        0&\alpha_i&&\vdots\\
        \vdots&&\ddots&0\\
        0&\dots&0&\alpha_i^{1+\ldots+q^{s(k-2)}}
    \end{pmatrix}G_{k,s,m}=G_{k,s,m}.
\end{equation*}
Hence, this is equivalent to asking
\[ \begin{pmatrix}
        1&0&\dots&0\\
        0&\beta_{\sigma(i)}^{-1}&&\vdots\\
        \vdots&&\ddots&0\\
        0&\dots&0&(\beta_{\sigma(i)}^{-1})^{1+\ldots+q^{s(k-2)}}
    \end{pmatrix}\gamma_iA\begin{pmatrix}
        1&0&\dots&0\\
        0&\alpha_i&&\vdots\\
        \vdots&&\ddots&0\\
        0&\dots&0&\alpha_i^{1+\ldots+q^{s(k-2)}}
    \end{pmatrix} \in \mathrm{stab}_{\mathrm{GL}(k,q^m)}(G_{k,s,m}), \]
that is, by Theorem \ref{th:eqgab}, there exists $d_i\in\mathbb{F}_{q^m}^*$ such that 
\begin{equation*}
    \begin{pmatrix}
        1&0&\dots&0\\
        0&\beta_{\sigma(i)}^{-1}&&\vdots\\
        \vdots&&\ddots&0\\
        0&\dots&0&(\beta_{\sigma(i)}^{-1})^{1+\ldots+q^{s(k-2)}}
    \end{pmatrix}\gamma_iA\begin{pmatrix}
        1&0&\dots&0\\
        0&\alpha_i&&\vdots\\
        \vdots&&\ddots&0\\
        0&\dots&0&\alpha_i^{1+\ldots+q^{s(k-2)}}
    \end{pmatrix}=\begin{pmatrix}
        d_i&0&\dots&0\\
        0&d_i^{q^s}&&\vdots\\
        \vdots&&\ddots&0\\
        0&\dots&0&d_i^{q^{s(k-1)}}\end{pmatrix}.
\end{equation*}
Thus, this can happen if and only if the matrix $A$ is of the form
\begin{equation*}
    A=\begin{pmatrix}
        \gamma_i^{-1}d_i&0&\dots&0\\
        0&\gamma_i^{-1}\beta_{\sigma(i)}d_i^{q^s}\alpha_i^{-1}&&\vdots\\
        \vdots&&\ddots&0\\
        0&\dots&0&\gamma_i^{-1}\beta_{\sigma(i)}^{1+\ldots+q^{s(k-2)}}d_i^{q^{s(k-1)}}{(\alpha_i^{-1})}^{1+\ldots+q^{s(k-2)}}\end{pmatrix}.
\end{equation*}
Now, suppose that the $q$-systems $(\mc{U}_1,\ldots,\mc{U}_t)$ and $(\mc{V}_1,\ldots,\mc{V}_t)$ are equivalent, then the matrix $A$ does not depend on the index $i$. Hence, we obtain, for every $i \in \{1,\ldots,t\}$, the following system
\begin{equation*}
    \begin{cases}
       \gamma_1^{-1}d_1=\gamma_i^{-1}d_i,\\
        \gamma_1^{-1}\beta_{\sigma(1)}d_1^{q^s}\alpha_1^{-1}=\gamma_i^{-1}\beta_{\sigma(i)}d_i^{q^s}\alpha_i^{-1},\\
        \qquad\qquad\qquad\vdots\\
        \gamma_1^{-1}\beta_{\sigma(1)}^{1+\ldots+q^{s(k-2)}}d_1^{q^{s(k-1)}}{(\alpha_1^{-1})}^{1+\ldots+q^{s(k-2)}}=\gamma_i^{-1}\beta_{\sigma(i)}^{1+\ldots+q^{s(k-2)}}d_i^{q^{s(k-1)}}{(\alpha_i^{-1})}^{1+\ldots+q^{s(k-2)}}.
    \end{cases}
\end{equation*}
Substituting the first equation into the second, we get, for every $i \in \{1,\ldots,t\}$,
\begin{equation*}
    \gamma_1^{-1+q^s}\gamma_i^{1-q^s}\beta_{\sigma(1)}\alpha_1^{-1}\beta_{\sigma(i)}^{-1}\alpha_i=1
\end{equation*}
and applying the norm over $\fq$ on both sides
\begin{equation*}
\mathrm{N}_{s}(\beta_{\sigma(1)}\alpha_1^{-1})=\mathrm{N}_{s}(\beta_{\sigma(i)}\alpha_i^{-1}).
\end{equation*}
Denoting $\xi:=\mathrm{N}_{s}(\beta_{\sigma(1)}\alpha_1^{-1})\in\mathbb{F}_q^*$, we have
\begin{equation*}
    \begin{cases}
        \mathrm{N}_{s}(\beta_{\sigma(1)})=\xi \mathrm{N}_{s}(\alpha_1),\\
        \mathrm{N}_{s}(\beta_{\sigma(2)})=\xi \mathrm{N}_{s}(\alpha_2),\\
        \qquad\qquad\vdots\\
        \mathrm{N}_{s}(\beta_{\sigma(t)})=\xi \mathrm{N}_{s}(\alpha_t).\\
    \end{cases}
\end{equation*}
Equivalently, for every $i\in\{1,\ldots,t\}$ there exists a $j\in\{1,\ldots,t\}$ such that $\mathrm{N}_{s}(\beta_j)=\xi \mathrm{N}_{s}(\alpha_i)$, hence the assertion is proved.\\
Now, suppose that there exists $\xi\in\F_q^*$ such that $\{\mathrm{N}_{s}(\beta_j)\}_{j\in\{1,\ldots,t\}}=\{\xi \mathrm{N}_{s}(\alpha_i)\}_{i\in\{1,\ldots,t\}}$, then there exists $\sigma\in\Sym(t)$ such that, for every $i\in\{1,\ldots,t\}$, $\sigma(i)$ is the unique $j\in\{1,\ldots,t\}$ with $\mathrm{N}_{s}(\beta_j)=\xi\mathrm{N}_{s}(\alpha_i)$. In particular, for every $i\in\{1,\ldots,t\}$, we have
\begin{equation*}
\begin{cases}
    \N_s(\beta_{\sigma(1)}\alpha_1^{-1})=\xi\\
    \N_s(\beta_{\sigma(i)}\alpha_i^{-1})=\xi   
\end{cases}\qquad\Leftrightarrow \qquad\N_s(\beta_{\sigma(1)}\alpha_1^{-1}\beta_{\sigma(i)}^{-1}\alpha_i)=1.
\end{equation*}
Therefore, for every $i\in\{1,\ldots,t\}$ there exists $\eta_i\in\F_{q^m}^*$ such that 
\begin{equation}
\label{eq:pot}
\beta_{\sigma(1)}\alpha_1^{-1}\beta_{\sigma(i)}^{-1}\alpha_i=\eta_i^{q^s-1}.
\end{equation}
Consider $\mc{U}_1$ and $\mc{V}_{\sigma(1)}$ and observe that the matrix 
\begin{equation*}A=
\begin{pmatrix}
        1&0&\dots&0\\
        0&\beta_{\sigma(1)}\alpha_1^{-1}&&\vdots\\
        \vdots&&\ddots&0\\
        0&\dots&0&\beta_{\sigma(1)}^{1+\ldots+q^{s(k-2)}}{(\alpha_1^{-1})}^{1+\ldots+q^{s(k-2)}}
        \end{pmatrix}  
\end{equation*}
maps $\mathcal{U}_1$ in $\mathcal{V}_{\sigma(1)}$.
Choosing $\gamma_1=1$ and $\gamma_i=\eta_i$ for every $i\in\{2,\ldots,t\}$ we obtain that, for every $i\in\{2,\ldots,t\}$,
\begin{equation*}
    \begin{pmatrix}
        1&0&\dots&0\\
        0&\beta_{\sigma(i)}^{-1}&&\vdots\\
        \vdots&&\ddots&0\\
        0&\dots&0&(\beta_{\sigma(i)}^{-1})^{1+\ldots+q^{s(k-2)}}
    \end{pmatrix}\eta_iA\begin{pmatrix}
        1&0&\dots&0\\
        0&\alpha_i&&\vdots\\
        \vdots&&\ddots&0\\
        0&\dots&0&\alpha_i^{1+\ldots+q^{s(k-2)}}
    \end{pmatrix}G_{k,s,m}=G_{k,s,m},
\end{equation*}
since the product matrix on the left hand side is
\begin{equation*}
    \begin{pmatrix}
        \eta_i&0&\dots&0\\
        0&\eta_i(\beta_{\sigma(i)}^{-1}\beta_{\sigma(1)}\alpha_1^{-1}\alpha_i)&&\vdots\\
        \vdots&&\ddots&0\\
        0&\dots&0&\eta_i(\beta_{\sigma(i)}^{-1}\beta_{\sigma(1)}\alpha_1^{-1}\alpha_i)^{1+\ldots+q^{s(k-2)}}
    \end{pmatrix},
\end{equation*}
and by Equation \eqref{eq:pot} we get that such a matrix corresponds to
\begin{equation*}
    \begin{pmatrix}
        \eta_i&0&\dots&0\\
        0&\eta_i^{q^s}&&\vdots\\
        \vdots&&\ddots&0\\
        0&\dots&0&\eta_i^{q^{s(k-1)}}
    \end{pmatrix},
\end{equation*}
and so it is in $\mathrm{stab}_{\mathrm{GL}(k,q^m)}(G_{k,s,m})$ and this concludes the proof.
\end{proof}

We can extend the above result to the case where $k = m$ using the same strategy. The main difference concerns the structure of $\mathrm{stab}_{\mathrm{GL}(k,q^m)}(G_{m,s,m})$, which is more difficult to handle.

\begin{theorem}
\label{th:eqsets=}
Let $s \in \{1,\ldots,m\}$ with $\gcd(s,m)=1$ and $\{\alpha_1,\ldots,\alpha_t\}$ and $\{\beta_1,\ldots,\beta_t\}$ be two subsets of $\fqm$ whose elements have pairwise distinct nonzero norms over $\fq$.
Let $(\mc{U}_1,\ldots,\mc{U}_t)$ and $(\mc{V}_1,\ldots,\mc{V}_t)$ be defined as
\begin{align*}
    \mc{U}_i&=\{(x,\mathrm{N}_s^1(\alpha_i)x^{q^s},\ldots,\mathrm{N}_s^{m-1}(\alpha_i)x^{q^{s(m-1)}})\,:\,x\in\mathbb{F}_{q^m}\},\\
    \mc{V}_j&=\{(y,\mathrm{N}_s^1(\beta_j)y^{q^{s}},\ldots,\mathrm{N}_s^{m-1}(\beta_j)y^{q^{s(m-1)}})\,:\,y\in\mathbb{F}_{q^m}\},
\end{align*}
for every $i,j \in \{1,\ldots,t\}$.
Then $(\mc{U}_1,\ldots,\mc{U}_t)$ and $(\mc{V}_1,\ldots,\mc{V}_t)$ are equivalent if and only if there exists $\xi\in\mathbb{F}_{q}^*$ such that $\{\N_s(\beta_j)\}_{j\in\{1,\ldots,t\}}=\{\xi \N_s(\alpha_i)\}_{i\in\{1,\ldots,t\}}$.
\end{theorem}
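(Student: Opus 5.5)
The converse direction needs nothing new. Suppose $\{\N_s(\beta_j)\}_j=\{\xi\N_s(\alpha_i)\}_i$ for some $\xi\in\F_q^*$, and let $\sigma$ be the corresponding permutation. I would reuse verbatim the construction from the proof of Theorem~\ref{th:eqsets}. The matrix $A=\mathrm{diag}(\mathrm{N}_s^j(\beta_{\sigma(1)}\alpha_1^{-1}))_{j}$ together with $\gamma_1=1$ and $\gamma_i=\eta_i$, where $\eta_i$ is given by \eqref{eq:pot}, produces for every $i$ the matrix $\mathrm{diag}(\eta_i,\eta_i^{q^s},\dots,\eta_i^{q^{s(m-1)}})$. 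This is the Dickson matrix $D_f$ of the invertible map $f(x)=\eta_i x$, so by Theorem~\ref{th:eqgab=} it lies in $\stab_{\GL(m,q^m)}(G_{m,s,m})$. The only difference from the case $k<m$ is that we now need a diagonal matrix to lie in the larger stabilizer, which is automatic.

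For the direct direction, write $\Delta_\alpha:=\mathrm{diag}(\mathrm{N}_s^0(\alpha),\dots,\mathrm{N}_s^{m-1}(\alpha))$. Exactly as in the proof of Theorem~\ref{th:eqsets}, equivalence gives $\Delta_{\beta_{\sigma(i)}}^{-1}\gamma_iA\Delta_{\alpha_i}\in\stab(G_{m,s,m})$. By Theorem~\ref{th:eqgab=} this means $A=\gamma_i^{-1}\Delta_{\beta_{\sigma(i)}}D_{f_i}\Delta_{\alpha_i}^{-1}$ for invertible $f_i=\sum_l a^{(i)}_lx^{q^{sl}}$, with rows and columns indexed by $0,\dots,m-1$. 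Let $\langle r+l\rangle\in\{0,\dots,m-1\}$ denote $r+l$ reduced mod $m$. The entry of $A$ in position $(r,\langle r+l\rangle)$ is then
\[
A_{r,\langle r+l\rangle}=\gamma_i^{-1}\,\mathrm{N}_s^r(\beta_{\sigma(i)})\,(a^{(i)}_l)^{q^{sr}}\,\mathrm{N}_s^{\langle r+l\rangle}(\alpha_i)^{-1}.
\]
Since $A$ does not depend on $i$, whether the ``cyclic diagonal'' $l$ of $A$ is nonzero does not depend on $i$ either. Because $f_1\neq0$, there is some $l$ with $a^{(i)}_l\neq 0$ for all $i$.

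The key computation is the quotient $A_{r+1,\langle r+1+l\rangle}/A_{r,\langle r+l\rangle}^{q^s}$. I would use the recursion $\mathrm{N}_s^{j+1}(x)=x\,\mathrm{N}_s^j(x)^{q^s}$. When $r$ and $r+1$ are both on the same side of the wrap (either $r+1+l\le m-1$, or $r+l\ge m$), the unknown $a^{(i)}_l$ cancels and the quotient equals $c_i:=\gamma_i^{q^s-1}\beta_{\sigma(i)}\alpha_i^{-1}$. The quotient is the same number for all $i$ because $A$ is fixed. Taking $\N_s$ gives $\N_s(\beta_{\sigma(i)}\alpha_i^{-1})=\N_s(c_1)=:\xi$ for all $i$, which is the claim.

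The main obstacle is ensuring such a pair $(r,r+1)$ exists for the chosen $l$. When $l\le m-2$, the non-wrapping choice $r=0$ works. When $l\ge2$, the wrapping choice $r=m-l$ works. So for $m\ge3$ every $l$ is covered. The remaining case is $m=2$ with only $l=1$ nonzero. Here the only pair crosses the wrap, and I expect the quotient to pick up an extra factor $\N_s(\alpha_i)$ compared with the formula above. I would handle this case by instead comparing the two entries $A_{0,1}$ and $A_{1,0}$ directly through their product and $q^s$-powers. Alternatively, I would note that for $m=2$, $k=m$ the Singleton-type statement trivializes, and check the claim by hand.
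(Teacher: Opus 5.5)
Your converse direction and your direct argument for $m\ge 3$ are correct. Working with an arbitrary nonzero cyclic diagonal of $A$ is in fact more careful than the paper's proof. The paper reads the constant $c$ off the single entry $a_{2,2}=\beta_{\sigma(i)}\gamma_i^{q^s-1}a_0^{q^s}\alpha_i^{-1}$, and so silently assumes $a_0\neq 0$.

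The genuine gap is your last paragraph. The case $m=2$ with $A$ antidiagonal cannot be closed by comparing $A_{0,1}$ and $A_{1,0}$, nor by a hand check, because the statement is false there.

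\begin{itemize}
\item For $m=2$ (so $s=1$), an antidiagonal $A$ imposes exactly the relation you anticipated, $\gamma_i^{q-1}\beta_{\sigma(i)}\alpha_i^{q}=A_{1,0}/A_{0,1}^{q}$ for all $i$, and nothing else. Taking norms shows that $\N_s(\beta_{\sigma(i)})\N_s(\alpha_i)$ is independent of $i$. So the norm set is inverted rather than translated.
\item This really occurs. Put $\beta_i=\alpha_i^{-q}$ and $A=\begin{pmatrix}0&1\\1&0\end{pmatrix}$. Then $A\,(x,\alpha_i x^{q})^{\top}=(y,x)^{\top}$ with $y=\alpha_i x^{q}$, and $\beta_i y^{q}=\alpha_i^{-q}\alpha_i^{q}x^{q^2}=x$. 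Hence $A\,\mathcal{U}_i=\mathcal{V}_i$ for every $i$, with $\sigma=\mathrm{id}$ and all $\gamma_i=1$.
\item Concretely, take $q=7$ with $\N_s(\alpha_1),\N_s(\alpha_2),\N_s(\alpha_3)$ equal to $1,5,6$. Then the $\beta_i$ have norms $1,3,6$. But $\{1,3,6\}\neq\xi\{1,5,6\}$ for every $\xi\in\F_7^*$; in Table~\ref{table:finexample} it lies in the orbit of $\{2,5,6\}$.
\end{itemize}

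This is just the $m=2$ instance of the paper's theorem relating $\mathcal{U}^{-s}_i$ and $\mathcal{U}^{s}_i$, since $-s\equiv s \pmod 2$.

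So the theorem needs the hypothesis $m\ge 3$. For $m=1$ it also fails trivially, since every $\mathcal{U}_i$ equals $\F_q$. Under $m\ge 3$ your argument is a complete proof: every nonzero cyclic diagonal has a pair of consecutive rows not separated by the wrap, exactly as you showed.

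Rather than trying to rescue $m=2$, you should exclude it. Your method together with Hilbert~90 shows that for $m=2$ the systems are equivalent if and only if $\{\N_s(\beta_j)\}_j$ equals $\xi\{\N_s(\alpha_i)\}_i$ or $\xi\{\N_s(\alpha_i)^{-1}\}_i$ for some $\xi\in\F_q^*$. In particular, for $m=k=2$, $q=7$, $t=3$ there are $3$ classes, not the $4$ claimed in the paper's final example.
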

\begin{proof}
Arguing as in the previous theorem, we find that $(\mc{U}_1,\ldots,\mc{U}_t)$ and $(\mc{V}_1,\ldots,\mc{V}_t)$ are equivalent if and only if
\begin{equation*}
    \begin{pmatrix}
        1&0&\dots&0\\
        0&\beta_{\sigma(i)}^{-1}&&\vdots\\
        \vdots&&\ddots&0\\
        0&\dots&0&(\beta_{\sigma(i)}^{-1})^{1+\ldots+q^{s(m-2)}}
    \end{pmatrix}\gamma_iA\begin{pmatrix}
        1&0&\dots&0\\
        0&\alpha_i&&\vdots\\
        \vdots&&\ddots&0\\
        0&\dots&0&\alpha_i^{1+\ldots+q^{s(m-2)}}
    \end{pmatrix}G_{m,s,m}=G_{m,s,m}.
\end{equation*}
By Theorem \ref{th:eqgab=}, this happens if and only if there exists $f_i=d^{(i)}_0x+\ldots+d^{(i)}_{m-1}x^{q^{s(m-1)}}\in\tilde{\mathcal{L}}_{m,q^s}$ invertible such that
\begin{equation*}
    \begin{pmatrix}
        1&0&\dots&0\\
        0&\beta_{\sigma(i)}^{-1}&&\vdots\\
        \vdots&&\ddots&0\\
        0&\dots&0&(\beta_{\sigma(i)}^{-1})^{1+\ldots+q^{s(m-2)}}
    \end{pmatrix}\gamma_i\begin{pmatrix}
        \mathbf{a}_1\\
        \mathbf{a}_2\\
        \vdots\\
        \mathbf{a}_m
    \end{pmatrix}\begin{pmatrix}
        1&0&\dots&0\\
        0&\alpha_i&&\vdots\\
        \vdots&&\ddots&0\\
        0&\dots&0&\alpha_i^{1+\ldots+q^{s(m-2)}}
    \end{pmatrix}=D_{f_i}.
\end{equation*}

The above expression can be written as follows 

\begin{equation}\label{eq:condDicksonsecondcase}
\begin{cases}
\gamma_i\mathbf{a}_1&=(d^{(i)}_0,\alpha_i^{-1}d^{(i)}_1,\ldots,(\alpha_i^{-1})^{1+\ldots+q^{s(m-2)}}d^{(i)}_{m-1}),\\
\gamma_i\mathbf{a}_2&=\beta_{\sigma(i)}((d^{(i)}_{m-1})^{q^s},\alpha_i^{-1}(d^{(i)}_0)^{q^s},\ldots,(\alpha_i^{-1})^{1+\ldots+q^{s(m-2)}}(d^{(i)}_{m-2})^{q^s}),\\
&\;\vdots\\
\gamma_i\mathbf{a}_m&=\beta_{\sigma(i)}^{1+\ldots+q^{s(m-2)}}((d^{(i)}_{1})^{q^{s(m-1)}},\alpha_i^{-1}(d^{(i)}_2)^{q^{s(m-1)}},\ldots,(\alpha_i^{-1})^{1+\ldots+q^{s(m-2)}}(d^{(i)}_0)^{q^{s(m-1)}}).
\end{cases}    
\end{equation}
Now, suppose that $(\mc{U}_1,\ldots,\mc{U}_t)$ and $(\mc{V}_1,\ldots,\mc{V}_t)$ are equivalent and write $\mathbf{a}_1=(a_0,\ldots,a_{m-1})$. The first equation in \eqref{eq:condDicksonsecondcase} becomes
\begin{equation*}
\begin{cases}
    d_0^{(i)}=\gamma_ia_0,\\
    d_k^{(i)}=\gamma_i(\alpha_i)^{\sum_{l=0}^{k-1}q^{sl}}a_k,\qquad k\in\{1,\ldots,m-1\},
\end{cases}
\end{equation*}
thus, we can substitute these values in the second equation of \eqref{eq:condDicksonsecondcase} and obtain the following
\begin{equation*}
    \mathbf{a}_2=\beta_{\sigma(i)}\gamma_i^{q^s-1}\alpha_i^{-1}(\N_s(\alpha_i)a_{m-1}^{q^s},a_0^{q^s},\ldots,a_{m-2}^{q^s}).
\end{equation*}
In particular,
\begin{equation*}
    a_{2,2}=\beta_{\sigma(i)}\gamma_i^{q^s-1}a_{0}^{q^s}\alpha_i^{-1}.
\end{equation*}

Since the matrix $A$ does not depend on the choice of $i$, there exists $c\in\F_{q^m}^*$ such that, for every $i\in\{1,\ldots,t\}$,
\begin{equation*}
\beta_{\sigma(i)}\alpha_i^{-1}\gamma_i^{q^s-1}=c.
\end{equation*}
Hence, we have
\begin{equation*}
    \gamma_1^{-1+q^s}\gamma_i^{1-q^s}\beta_{\sigma(1)}\alpha_1^{-1}\beta_{\sigma(i)}^{-1}\alpha_i=1,
\end{equation*}
for every $i\in\{1,\ldots,t\}$.
By applying the norm on both sides, we get
\begin{equation*}
\N_s(\beta_{\sigma(1)}\alpha_1^{-1})=\N_s(\beta_{\sigma(i)}\alpha_i^{-1}).
\end{equation*}
Denoting $\xi=\N_s(\beta_{\sigma(1)}\alpha_1^{-1})\in\mathbb{F}_q^*$, we have
\begin{equation*}
    \begin{cases}
        \N_s(\beta_{\sigma(1)})=\xi \N_s(\alpha_1),\\
        \N_s(\beta_{\sigma(2)})=\xi \N_s(\alpha_2),\\
        \qquad\qquad\vdots\\
        \N_s(\beta_{\sigma(t)})=\xi \N_s(\alpha_t),\\
    \end{cases}
\end{equation*}
i.e. the assertion.

For the converse, one can argue as in Theorem \ref{th:eqsets}.
\end{proof}

In the next section, by combining the results of this section, we will give a closed formula to determine the number of inequivalent linearized Reed–Solomon codes.

\section{Counting inequivalent Linearized Reed--Solomon codes}\label{sec:6}

In this section, we determine the number of inequivalent classes of linearized Reed-Solomon codes with fixed parameters. By Theorems \ref{th:eqsets} and \ref{th:eqsets=}, two such codes are equivalent precisely when the corresponding evaluation sets differ by multiplication by an element of $\F_q^*$. Consequently, the problem reduces to counting subsets of $\F_q^*$ of a given size up to this multiplicative action.

More precisely, let $\binom{\F_q^*}{t}$ denote the family of subsets of $\F_q^*$ of cardinality $t$. The multiplicative group $\F_q^*$ acts naturally on $\binom{\F_q^*}{t}$ by scalar multiplication. Therefore, inequivalent linearized Reed-Solomon codes correspond to the orbits of this action. Our goal is thus to determine the number of such orbits.

To this end, we study the structure of the stabilizers of subsets under this action. Since $\F_q^*$ is cyclic of order $q-1$, its subgroups are uniquely determined by the divisors of $q-1$, and this property allows us to explicitly characterize the possible stabilizers. Using this characterization together with the Orbit-Stabilizer Theorem, we derive a counting formula for the number of orbits, and hence for the number of inequivalent classes of linearized Reed--Solomon codes.

Consider the following action

\begin{align*}
    \Phi_\star:\quad\mathbb{F}_q^*\times\binom{\F_q^*}{t}&\rightarrow \quad\binom{\F_q^*}{t}\:,\\
    \\
    (\xi,A)\quad&\mapsto \quad \xi \star A
\end{align*}
where for every $A=\{a_i\}_{i\in\{1,\ldots,t\}}\in\binom{\F_q^*}{t}$ and $\xi\in\mathbb{F}_q^*$, we define $\xi \star A\coloneqq \{\xi a_i\}_{i\in\{1,\ldots,t\}}$.

\begin{remark}\label{rem:cyclicsubgroups}
Observe that $\F_q^*$ is cyclic of order $q-1$, therefore, for every divisor 
$d \mid (q-1)$ there exists a unique subgroup $\cyclic_d$ of order $d$. 
Therefore, the stabilizer of any $A \in \binom{\F_q^*}{t}$ must be one of 
these subgroups. In particular, by the Orbit--Stabilizer Theorem, the size 
of every orbit divides $q-1$.
\end{remark}

In order to count the orbits of the action $\Phi_\star$, we first characterize the subsets whose stabilizer contains a given subgroup of $\F_q^*$. The following lemma shows that this occurs precisely when the subset can be written as a union of multiplicative cosets.

\begin{lemma}
\label{lem:char}
    Let $A\in\binom{\F_q^*}{t}$ and $d\,|\,q-1$, then $\cyclic_d\leq\stab(A)$ if and only if $d\,|\,t$ and there exist $a_1,\ldots,a_{t/d}\in A$ such that 
    $A=a_1\cyclic_d\cup\ldots\cup a_{t/d}\cyclic_d$.
\end{lemma}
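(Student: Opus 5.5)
The plan is to prove both implications directly, using only that $\cyclic_d$ is the unique subgroup of order $d$ in the cyclic group $\F_q^*$ (Remark~\ref{rem:cyclicsubgroups}) and that the cosets of $\cyclic_d$ partition $\F_q^*$.

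For the forward direction, assume $\cyclic_d\leq\stab(A)$, so $\xi\star A=A$ for every $\xi\in\cyclic_d$. Then for each $a\in A$ and each $\xi\in\cyclic_d$ we have $\xi a\in A$, that is, $a\cyclic_d\subseteq A$. Hence $A$ is the union of the cosets $a\cyclic_d$ with $a\in A$. Since distinct cosets of $\cyclic_d$ in $\F_q^*$ are disjoint and each has exactly $d$ elements, $A$ is a disjoint union of some number $r$ of such cosets. This gives $t=|A|=rd$, so $d\mid t$ and $r=t/d$. Picking one representative $a_j\in A$ from each coset yields $A=a_1\cyclic_d\cup\ldots\cup a_{t/d}\cyclic_d$.

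For the converse, assume $A=\bigcup_{j=1}^{t/d}a_j\cyclic_d$ and take $\xi\in\cyclic_d$. Since $\cyclic_d$ is a subgroup, $\xi a_j\cyclic_d=a_j(\xi\cyclic_d)=a_j\cyclic_d$ for each $j$, and therefore $\xi\star A=A$. So $\cyclic_d\leq\stab(A)$. The hypothesis $d\mid t$ is automatically consistent with this, because the cosets in the union are pairwise disjoint: were two of them equal, $|A|$ would be less than $t$.

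There is no real obstacle here. The only point to watch is in the forward direction: the representatives should be chosen from pairwise distinct cosets, so that the union has exactly $t/d$ members and the count $t=(t/d)\,d$ is justified by the disjointness of cosets.
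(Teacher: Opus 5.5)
Your proof is correct and follows essentially the same route as the paper. In the forward direction you show $a\cyclic_d\subseteq A$ for each $a\in A$ and decompose $A$ into disjoint cosets (the paper gets the same decomposition by greedily choosing $a_1$, then $a_2\in A\setminus a_1\cyclic_d$, and so on), and in the converse you check that each coset $a_j\cyclic_d$ is closed under multiplication by $\cyclic_d$; your appeal to the coset partition of $\F_q^*$ to justify $t=(t/d)\,d$ is a bit more explicit than the paper's ``clearly $d\mid t$''.
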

\begin{proof}
Suppose that $\cyclic_d\leq\stab(A)$, since $\cyclic_d$ is a cyclic subgroup, there exist $\xi\in\F_q^*$ such that $\cyclic_d=\la\xi\ra$ and $A=\xi^j \star A$ for every $j\in\{0\ldots,d-1\}$. Consider $a_1\in A$, then $a_1\cyclic_d=\{a_1,a_1\xi,\ldots,a_1\xi^{d-1}\}\subseteq A$. Similarly, we can now consider $a_2\in A\setminus a_1\cyclic_d$  to obtain another multiplicative coset $a_2\cyclic_d$. Repeating this process until $a_i$ cannot be chosen, we end up with a partition of $A=a_1\cyclic_d\cup\ldots\cup a_{t/d}\cyclic_d$. Here, clearly $d\,|\,t$.

Now, suppose that $d\,|\,t$ and that there exist $a_1,\ldots,a_{t/d}\in A$ such that 
 $A=a_1\cyclic_d\cup\ldots\cup a_{t/d}\cyclic_d$. Then, for every $a\in A$ there exist some $a_i\in A$ and $j\in\{0,\ldots,d-1\}$ such that $a=a_i\xi^j$, so $\xi^k a=a_i\xi^{j+k}\in a_i\cyclic_d\subseteq A$ for every $k\in\{0,\ldots,d-1\}$, hence $\cyclic_d\leq \stab(A)$.
\end{proof}

\begin{remark}
From the previous lemma we directly obtain that the stabilizers must be chosen between the subgroups of $\F_q^*$ whose order divides $\gcd(q-1,t)$. 
\end{remark}

Lemma \ref{lem:char} directly implies the following.

\begin{theorem}\label{thm:countingmtcoprime}
Let $q,t,m,k$ be positive integers such that $\gcd(q-1,t)=1$ and $1<k\leq m$, then the number of inequivalent classes of $[\mathbf{m},k,tm-k+1]_{q^m/q}$ linearized Reed-Solomon codes equals $\Psi(k,m)\binom{q-1}{t}/(q-1)$, where 
\[ \Psi(k,m)= \begin{cases}
\frac{\varphi(m)}2, & \text{ if } k\leq m-2,\\
1, & \text{ if } k \in \{m-1,m\},\\
\end{cases}\]
and $\varphi$ is the Euler's phi function.
\end{theorem}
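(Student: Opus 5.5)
The count splits into two independent parts: the choice of $s$ up to equivalence, and, for fixed $s$, the set of norms up to multiplication by $\F_q^*$.

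\textbf{Reduction to orbits.} Every $[\mathbf{m},k]_{q^m/q}$ linearized Reed--Solomon code is determined, up to equivalence, by its $q$-system $(\mc{U}_1,\ldots,\mc{U}_t)$. This system depends only on $s$ and on the set of norms $\Lambda=\{\N_s(\alpha_i)\}\in\binom{\F_q^*}{t}$. Indeed, $\N_s(\alpha)$ is surjective onto $\F_q^*$ and determines $\alpha$ up to a factor $\eta^{q^s-1}$; the diagonal change of basis used in the proof of Theorem~\ref{th:eqsets} shows that such a factor yields an equivalent system. Conversely, every $t$-subset of $\F_q^*$ arises as some $\Lambda$.

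\textbf{Dependence on $s$.} By Proposition~\ref{prop:s<m/2}, codes with $s'\not\equiv\pm s \pmod m$ are inequivalent. By the theorem relating $-s$ to $s$, the class $-s$ produces the same family of systems as $s$. Here the norm set transforms as $\Lambda\mapsto\Lambda^{-1}$ up to Frobenius, so I must check that this map merely permutes the $\F_q^*$-orbits bijectively; I expect it does. This leaves $\varphi(m)/2$ admissible values of $s$ when $k\le m-2$.

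When $k\in\{m-1,m\}$, Theorem~\ref{thm:Gabs-s} says all Gabidulin systems are equivalent, so the $s$-parameter should contribute only a factor $1$. Justifying this is the delicate point. Proposition~\ref{prop:s<m/2} only gives a necessary condition, so I need an explicit equivalence between the systems for different $s$ with compatible norm sets. For $k=m$, I would compose the maps $G_{m,s,m}\to G_{m,s',m}$ given by permuting coordinates, then check how they act on the diagonal norm matrices. I expect this to be the main obstacle, and I would follow the paper's formula $\Psi$ here.

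\textbf{Orbit count.} For fixed $s$, Theorems~\ref{th:eqsets} and~\ref{th:eqsets=} show that the classes are exactly the orbits of $\Phi_\star$ on $\binom{\F_q^*}{t}$. By Remark~\ref{rem:cyclicsubgroups}, each stabilizer is some $\cyclic_d$ with $d\mid q-1$. By Lemma~\ref{lem:char}, $\cyclic_d\le\stab(A)$ forces $d\mid t$, and since $\gcd(q-1,t)=1$ we get $d=1$. So every stabilizer is trivial and every orbit has size $q-1$ by the Orbit--Stabilizer Theorem. The number of orbits is therefore $\binom{q-1}{t}/(q-1)$, and multiplying by $\Psi(k,m)$ gives the stated formula.
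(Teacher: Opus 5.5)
\textbf{Where you agree with the paper.} Your orbit count is the paper's argument. Stabilizers are subgroups $\cyclic_d$, Lemma~\ref{lem:char} forces $d\mid t$, so $\gcd(q-1,t)=1$ leaves only $d=1$, giving $\binom{q-1}{t}/(q-1)$ orbits. The factor $\varphi(m)/2$ for $1<k\le m-2$, obtained from Proposition~\ref{prop:s<m/2} and the $\pm s$ theorem, is also the paper's route. Your worry about $\Lambda\mapsto\Lambda^{-1}$ is harmless: $(\xi\star A)^{-1}=\xi^{-1}\star A^{-1}$, and in any case the $-s$ family adds no new classes.

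\textbf{The gap: $k\in\{m-1,m\}$.} Deferring to the paper's $\Psi=1$ is not a proof, and the paper's own proof supplies none: it invokes Proposition~\ref{prop:s<m/2} only for $1<k<m-1$. Theorem~\ref{thm:Gabs-s} gives a matrix carrying $G_{k,s,m}$ to $G_{k,s',m}$ block by block. An equivalence of $t$-tuples, however, needs one common $A$, and for $t\ge2$ no such $A$ exists when $s'\not\equiv\pm s$.

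For $k=m$, write $\mc{U}_i=D_iG_{m,s,m}$ with $D_i=\mathrm{diag}(\N_s^0(\alpha_i),\ldots,\N_s^{m-1}(\alpha_i))$.
\begin{itemize}
\item $\mc{U}_i$ is an $m$-dimensional $\F_q$-subspace spanning $\F_{q^m}^m$, so it is fixed pointwise by a unique $q$-semilinear map $\Phi_i$.
\item Replacing $\mc{U}_i$ by $\gamma_iA\mc{U}_i$ replaces $\Phi_i$ by $\gamma_i^{1-q}A\Phi_iA^{-1}$. Hence each linear map $\Phi_i\Phi_j^{-1}$ is an invariant up to conjugation and a nonzero scalar.
\item Let $u\equiv s^{-1}\pmod m$ and $\tau(w)=(w_{m-1}^{q^s},w_0^{q^s},\ldots,w_{m-2}^{q^s})$, which fixes $G_{m,s,m}$. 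Then $\Phi_i=(D_i\tau D_i^{-1})^u$.
\item Put $a=\alpha_i\alpha_j^{-1}$, so $\N_s(a)\neq1$. Then $\Phi_i\Phi_j^{-1}$ is diagonal, with entry $\N_s^u(a)\N_s(a)^{-1}$ on $u$ coordinates and $\N_s^u(a)$ on the remaining $m-u$.
\end{itemize}
So the multiplicity pair $\{u,m-u\}$ is an invariant of the whole tuple, and it coincides for $s$ and $s'$ only if $s'\equiv\pm s$.

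For $k=m-1$ the conclusion is the same. By Theorem~\ref{th:eqgab}, every matrix mapping $G_{m-1,s,m}$ onto $G_{m-1,s',m}$ is monomial: the coordinate permutation induced by $x\mapsto x^{q^{s'-s}}$, composed with some $\mathrm{diag}(d,d^{q^s},\ldots)$. Impose that the same $A$ works on two blocks. The relation between consecutive diagonal entries then acquires a factor $\N_s(\alpha_i\alpha_j^{-1})^{-1}$ exactly where the index on the $s$-side wraps around modulo $m$. Both the wrapping and non-wrapping cases occur unless $s'\equiv\pm s$, which is a contradiction.

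\textbf{Consequence.} For $t\ge2$ and $m\ge3$, the choice of $s$ contributes $\varphi(m)/2$ for every $1<k\le m$. The value $\Psi(k,m)=1$ for $k\in\{m-1,m\}$ is correct only when $t=1$ or $\varphi(m)\le2$. For example, take $q=4$, $t=2$, $m=5$, $k=5$: the stated formula gives $1$, but $s=1$ and $s=2$ already give two inequivalent codes. So this step of your plan cannot be completed. The statement itself needs that restriction in the case $k\in\{m-1,m\}$.
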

\begin{proof}
Since $\gcd(q-1,t)=1$, it follows for every $A\in\binom{\F_q^*}{t}$ with $|O_{\F_q^*}(A)|=q-1$, that the stabilizer $\mathrm{stab}(A)=\{1\}$. Therefore, in this particular case, every set gives rise to a class of cardinality $q-1$, thus the number of inequivalent classes is $\binom{q-1}{t}/(q-1)$.
By Proposition \ref{prop:s<m/2}, different choices of $1 \leq s< m/2$ with $\gcd(s,m)=1$ provide inequivalent codes when $1<k<m-1$ and so we get the assertion.
\end{proof}

To determine the number of inequivalent classes in the general case, we study the orbits of the action $\Phi_\star$. By Lemma~\ref{lem:char}, subsets can be characterized by their stabilizers, which correspond precisely to representations as unions of multiplicative cosets.

\subsection{A closed formula}

Consider $q,t,m,k$ be positive integers with $k\leq m$.
Let us start by fixing $d$ as a divisor of $\gcd(q-1,t)$ and define
\begin{equation*}
    f_d=\bigg|\bigg\{A\in\binom{\F_q^*}{t}\,:\,|O_{\F_q^*}(A)|=\frac{q-1}{d}\bigg\}\bigg|
\end{equation*}
then, the number of orbits of size $(q-1)/d$ is given by $\frac{f_d}{(q-1)/d}$. By the Orbit-Stabilizer theorem and by Remark \ref{rem:cyclicsubgroups}, the value $f_d$ can be written as follows
\begin{align*}
    f_d&=\bigg|\bigg\{A\in\binom{\F_q^*}{t}\,:\,\stab(A)=\cyclic_d\bigg\}\bigg|\\
    &=\bigg|\bigg\{A\in\binom{\F_q^*}{t}\,:\,\stab(A)\geq \cyclic_d\bigg\}\bigg|-\bigg|\bigg\{A\in\binom{\F_q^*}{t}\,:\,\stab(A)>\cyclic_d\bigg\}\bigg|.
\end{align*}
By Lemma \ref{lem:char} we know that 
\begin{align*}
    \bigg|\bigg\{A\in\binom{\F_q^*}{t}\,:\,\stab(A)\geq \cyclic_d\bigg\}\bigg|=\bigg|\bigg\{A\in\binom{\F_q^*}{t}\,:\,A=a_1\cyclic_d\cup\ldots\cup a_{t/d}\cyclic_d,\,\exists a_1,\ldots,a_{t/d}\in A\bigg\}\bigg|,
\end{align*}
thus
\begin{align*}
    f_d=\binom{\frac{q-1}{d}}{\frac{t}{d}}-\bigg|\bigg\{A\in\binom{\F_q^*}{t}\,:\,\stab(A)>\cyclic_d\bigg\}\bigg|.
\end{align*}
On the other hand, we can write
\begin{align*}
    \bigg\{A\in\binom{\F_q^*}{t}\,:\,\stab(A)>\cyclic_d\bigg\}=\bigcup_{\substack{d'\,s.t.\, d'\neq d \\ d\,|\,d'\,|\,(q-1,t)}}\bigg\{A\in\binom{\F_q^*}{t}\,:\,\stab(A)=\cyclic_{d'}\bigg\},
\end{align*}
where the union is clearly disjoint. Finally, we obtain

\begin{align*}
    f_d&=\binom{\frac{q-1}{d}}{\frac{t}{d}}-\sum_{\substack{d'\,s.t.\, d'\neq d \\ d\,|\,d'\,|\,(q-1,t)}}\bigg|\bigg\{A\in\binom{\F_q^*}{t}\,:\,\stab(A)=\cyclic_{d'}\bigg\}\bigg|\\
    &=\binom{\frac{q-1}{d}}{\frac{t}{d}}-\sum_{\substack{d'\,s.t.\, d'\neq d \\ d\,|\,d'\,|\,(q-1,t)}}f_{d'}.
\end{align*}

In conclusion, the total number of orbits is
\begin{equation*}
    \sum_{d\,|\,(q-1,t)}\frac{f_d}{(q-1)/d}=\sum_{d\,|\,(q-1,t)}\frac{\binom{\frac{q-1}{d}}{\frac{t}{d}}-\sum_{\substack{d'\,s.t.\, d'\neq d \\ d\,|\,d'\,|\,(q-1,t)}}f_{d'}}{(q-1)/d}.
\end{equation*}

As in the proof of Theorem \ref{thm:countingmtcoprime}, Proposition \ref{prop:s<m/2} guaranties that different choices of $1 \leq s< m/2$ with $\gcd(s,m)=1$ provide inequivalent codes.
All of the above considerations yield the following result.

\begin{theorem}
    Suppose that $1<k\leq m$, then the number of inequivalent classes for linearized Reed-Solomon codes with parameters $[\mathbf{m},k,tm-k+1]_{q^m/q}$ equals
    $$\Psi(k,m)\sum_{d\,|\,(q-1,t)}\frac{f_d}{(q-1)/d}\\
    =\Psi(k,m)\sum_{d\,|\,(q-1,t)}\frac{\binom{\frac{q-1}{d}}{\frac{t}{d}}-\sum_{\substack{d'\,s.t.\, d'\neq d \\ d\,|\,d'\,|\,(q-1,t)}}f_{d'}}{(q-1)/d}.$$
\end{theorem}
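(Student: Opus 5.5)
The count splits as a product of two independent contributions: the choice of $s$ up to $\pm s \pmod m$, and the choice of the norm set $\Lambda=\{\N_s(\alpha_i)\}\subseteq\F_q^*$ up to multiplication by $\F_q^*$. I would take the $q$-system viewpoint via Theorem~\ref{th:correspsum}, so that counting inequivalent codes with parameters $[\mathbf{m},k,tm-k+1]_{q^m/q}$ becomes counting inequivalent linearized Reed--Solomon systems $(\mc{U}_1,\ldots,\mc{U}_t)$.

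\textbf{Step 1: the parameter $s$.} By Proposition~\ref{prop:s<m/2}, systems built from $s$ and $s'$ can be equivalent only if $s'\equiv\pm s\pmod m$ when $1<k<m-1$. The theorem relating $\mc{U}^{-s}$ and $\mc{U}^{s}$ shows that passing from $-s$ to $s$ only replaces each $\alpha_i$ by $(\alpha_i^{-1})^{q^{-s(k-2)}}$. This map sends the norm set $\Lambda$ to $\Lambda^{-1}$ and is a bijection on $t$-subsets with distinct norms. So the representatives $1\le s<m/2$ with $\gcd(s,m)=1$ exhaust all classes. There are $\varphi(m)/2$ of them when $k\le m-2$ (for $m\ge3$), giving the factor $\Psi(k,m)$. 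When $k\in\{m-1,m\}$, Theorem~\ref{thm:Gabs-s} makes all Gabidulin systems equivalent, so I would just fix $s=1$ and take $\Psi=1$. The point that needs care here is that this collapse does not create new identifications between norm sets, i.e.\ that the single-$s$ count is still governed by Step~2. I would handle it by noting that Theorems~\ref{th:eqsets} and~\ref{th:eqsets=} already classify the systems for a fixed $s$, and every system with another $s$ is equivalent to one with the fixed $s$ and some norm set.

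\textbf{Step 2: fixed $s$.} By Theorem~\ref{th:eqsets} for $k<m$ and Theorem~\ref{th:eqsets=} for $k=m$, two systems are equivalent exactly when their norm sets lie in the same orbit of the action $\Phi_\star$. Every $t$-subset of $\F_q^*$ occurs as a norm set, since $\N_s$ is surjective onto $\F_q^*$. Hence the classes for fixed $s$ are in bijection with the orbits of $\F_q^*$ on $\binom{\F_q^*}{t}$.

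\textbf{Step 3: counting orbits.} By Remark~\ref{rem:cyclicsubgroups} each stabilizer is some $\cyclic_d$, and Lemma~\ref{lem:char} forces $d\mid\gcd(q-1,t)$. Grouping the subsets by their exact stabilizer $\cyclic_d$, the orbit--stabilizer theorem shows each such orbit has size $(q-1)/d$, so these subsets contribute $f_d/((q-1)/d)$ orbits. Lemma~\ref{lem:char} identifies the sets with $\stab(A)\ge\cyclic_d$ with choices of $t/d$ cosets of $\cyclic_d$ among $(q-1)/d$, so there are $\binom{(q-1)/d}{t/d}$ of them. Removing the disjoint union over the larger stabilizers $\cyclic_{d'}$ with $d\mid d'$ and $d'\ne d$ gives the stated recursion for $f_d$. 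Summing over $d$ and multiplying by $\Psi(k,m)$ gives the formula. The main obstacle is Step~1, namely checking that inequivalent $s$ and inequivalent orbits combine independently into a product. The orbit count itself is routine once Lemma~\ref{lem:char} is in place.
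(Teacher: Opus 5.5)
Your Steps 2 and 3 are the paper's argument (orbit–stabilizer, Lemma~\ref{lem:char}, and the recursion for $f_d$), and so is Step 1 for $1<k\le m-2$. The gap is in Step 1 for $k\in\{m-1,m\}$ with $t\ge 2$. There you assert that every LRS system with parameter $s$ is equivalent to one with a fixed parameter and some norm set.

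Theorem~\ref{thm:Gabs-s} does not give this, because it only compares single Gabidulin blocks. A system equivalence needs one matrix $A$ with $\gamma_iAD_iG_{k,s,m}=D'_{\sigma(i)}G_{k,s',m}$ for all $i$, where $D_i=\mathrm{diag}(\N_s^j(\alpha_i))_j$ depends on the block. The natural candidate reindexes the monomials $x^{q^{sj}}$ into $q^{s'}$-order. It fails because at each wrap-around modulo $m$ the truncated norms $\N_s^j(\alpha_i)$ pick up a factor $\N_s(\alpha_i)$, and this factor differs from block to block. In fact the claim is false.

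Take $k=m$ and $t=2$. The first block ranges over all of $\fqm^m$, so the code is $\{(v,v\Theta)\}$ with $\Theta=M^{-1}\Lambda M$. Here $M=(b_l^{q^{sj}})_{j,l}$ for an $\fq$-basis $(b_l)$, $\Lambda=\mathrm{diag}(\N_s^j(\lambda))_{0\le j\le m-1}$, and $\lambda=\alpha_2/\alpha_1$.

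An isometry replaces $\Theta$ by $\gamma Q_1\Theta^{\pm1}Q_2$ with $Q_1,Q_2\in\GL(m,q)$ and $\gamma\in\fqm^*$. Hence the conjugacy class of $\Theta^{-1}\Theta^{(q)}$ (entrywise Frobenius), taken up to scalars and inversion, is an invariant.

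Write $M^{(q)}=\Pi M$, where $\Pi$ is the cyclic row shift by $u$ with $su\equiv 1\pmod m$. Then $\Theta^{-1}\Theta^{(q)}$ is conjugate to $\Lambda^{-1}\Pi^{-1}\Lambda^{(q)}\Pi$. This matrix is diagonal, with entry $\N_s(\lambda)/\N_s^u(\lambda)$ repeated $u$ times and $1/\N_s^u(\lambda)$ repeated $m-u$ times. The two values differ because $\N_s(\lambda)=\N_s(\alpha_2)/\N_s(\alpha_1)\neq 1$.

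So the multiplicity pair $\{u,m-u\}$ is an invariant, and it determines $s$ up to sign. For $m=5$, $s=1$ gives $\{1,4\}$ while $s=2$ gives $\{3,2\}$. Now take $q=3$, $t=2$, $k=m=5$. The right-hand side of the statement equals $1$, since $\F_3^*$ has only one $2$-subset, yet there are at least two inequivalent classes.

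For $k=m-1$ an analogous computation reaches the same conclusion. There the block maps are forced to be monomials $x\mapsto cx^{q^{s-s'}}$, since they must carry $\la x^{q^{sj}}\ra_{j\le m-2}$ onto $\la x^{q^{s'j}}\ra_{j\le m-2}$. The wrap-around factors then clash unless $s'\equiv\pm s$.

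So your Step 1 cannot be completed as stated. For $t\ge 2$ and $\varphi(m)>2$, the factor for $k\in\{m-1,m\}$ should be $\varphi(m)/2$ rather than $1$, and the statement is false there. The paper's own proof has the same hole. It sets $\Psi=1$ in these cases on the strength of Theorem~\ref{thm:Gabs-s}, and only invokes Proposition~\ref{prop:s<m/2}, which is a necessary condition and says nothing about different $s$ collapsing. What your argument does establish correctly is the formula for $1<k\le m-2$, and the orbit count for each fixed $s$ up to sign for all $1<k\le m$ (via Theorems~\ref{th:eqsets} and~\ref{th:eqsets=}).
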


As a special case, when $t=1$, we recover Corollary \ref{cor:numbinequiGabidulin} for Gabidulin codes.

\subsection{Examples}

In this section, we present two examples that illustrate how to use the previous argument to count the number of inequivalent linearized Reed–Solomon codes of a given dimension. We begin by proving the following result.

\begin{proposition}
The number of inequivalent classes of 
linearized Reed--Solomon codes with parameters $[\mathbf{m},k]_{25^m/25}$, with $\mathbf{m}=( \underbrace{m,\ldots,m}_{12\text{ times}} )$ and $1<k\leq m$,
equals $112720$ if $k\in\{m-1,m\}$ and it is $\frac{\varphi(m)}2 112720$ otherwise.
\end{proposition}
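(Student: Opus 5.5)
By Theorems~\ref{th:eqsets} and \ref{th:eqsets=}, together with Proposition~\ref{prop:s<m/2}, the number of inequivalent classes is $\Psi(k,m)$ times the number of orbits of the action $\Phi_\star$ of $\F_{25}^*$ on $\binom{\F_{25}^*}{12}$. So it suffices to show that this orbit count equals $112720$.

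Here $q-1=24$ and $\gcd(24,12)=12$, so the relevant divisors are $d\in\{1,2,3,4,6,12\}$. One route is to evaluate the closed formula by computing the $f_d$ recursively from the top down, starting with $f_{12}=\binom{2}{1}=2$ and continuing through $f_6,f_4,f_3,f_2,f_1$. I expect it is cleaner and less error-prone to count orbits with Burnside's lemma, and then cross-check against the recursion.

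For the Burnside count, take $\xi\in\F_{25}^*$ of multiplicative order $e$. Then $\xi$ fixes $A$ exactly when $\cyclic_e\le\stab(A)$. By Lemma~\ref{lem:char}, this happens if and only if $e\mid 12$ and $A$ is a union of $12/e$ cosets of $\cyclic_e$. Hence the number of sets fixed by $\xi$ is $\binom{24/e}{12/e}$ when $e\mid 12$, and $0$ otherwise. Since $\F_{25}^*$ is cyclic, the number of elements of order $e$ is $\varphi(e)$. The number of orbits is therefore
\[
\frac{1}{24}\sum_{e\mid 12}\varphi(e)\binom{24/e}{12/e}.
\]

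The individual terms are as follows:
\begin{itemize}
\item $e=1$: $\binom{24}{12}=2704156$;
\item $e=2$: $1\cdot\binom{12}{6}=924$;
\item $e=3$: $2\cdot\binom{8}{4}=140$;
\item $e=4$: $2\cdot\binom{6}{3}=40$;
\item $e=6$: $2\cdot\binom{4}{2}=12$;
\item $e=12$: $4\cdot\binom{2}{1}=8$.
\end{itemize}
These sum to $2705280$, and dividing by $24$ gives $112720$.

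Finally, multiply by $\Psi(k,m)$. This equals $1$ for $k\in\{m-1,m\}$ and $\varphi(m)/2$ for $1<k\le m-2$, which is the statement.

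The only real obstacle is arithmetic: making sure the binomial values and the multiplicities $\varphi(e)$ are correct. I would guard against slips by checking that the total is divisible by $24$, and, if needed, by reproducing the same number through the $f_d$ recursion of the closed formula.
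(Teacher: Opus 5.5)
Your orbit count is correct, and you compute it differently from the paper. The paper works top-down through the divisors of $12$: it finds the number $f_d$ of $12$-subsets with stabilizer exactly $\cyclic_d$ by subtracting the $f_{d'}$ with $d\mid d'$, and then sums $f_d\,d/24$. You instead apply Burnside's lemma, using Lemma~\ref{lem:char} only to get $|\mathrm{Fix}(\xi)|=\binom{24/e}{12/e}$ for $\xi$ of order $e\mid 12$. The two agree because $\sum_d d f_d=\sum_A|\stab(A)|=2705280$. Your route yields the non-recursive formula $\frac{1}{q-1}\sum_{e\mid\gcd(q-1,t)}\varphi(e)\binom{(q-1)/e}{t/e}$. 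The paper's recursion additionally records the orbit-size distribution: $1,1,3,8,75,112632$ orbits with stabilizer $\cyclic_{12},\cyclic_6,\cyclic_4,\cyclic_3,\cyclic_2,1$.

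The gap is in your first sentence, for $k\in\{m-1,m\}$. Proposition~\ref{prop:s<m/2} only says that an equivalence forces $G_{k,s,m}$ and $G_{k,s',m}$ to be equivalent. For these $k$ that is no restriction at all. To set $\Psi(k,m)=1$ you need the converse: LRS systems with $s'\not\equiv\pm s$ must be equivalent to ones with parameter $s$. Nothing you cite gives this, and the paper's proof has the same gap, since it only computes the orbits. In fact the converse fails once $t\ge 2$.

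Take $k=m$ first. Each $\mc U_i$ spans $\F_{q^m}^m$ and is the fixed space of the $q^s$-semilinear map $v\mapsto \alpha_i P_{\lambda_i}v^{(q^s)}$. Here $\lambda_i=\N_s(\alpha_i)$ and $P_\lambda w=(\lambda^{-1}w_{m-1},w_0,\dots,w_{m-2})$. If $s'\equiv rs \pmod m$ with $r\in\{1,\dots,m-1\}$, the unique $q^{s'}$-semilinear map fixing $\mc U_i$ pointwise is the $r$-th power, $v\mapsto \N_s^r(\alpha_i)P_{\lambda_i}^r v^{(q^{s'})}$. Hence $A\mc U_i=\gamma_i^{-1}\mc V_{\sigma(i)}$ forces
$\N_s^r(\alpha_i)\,A P^r_{\lambda_i}\bigl(A^{(q^{s'})}\bigr)^{-1}=\beta_i' P_{\mu_i}$,
where $\mu_i=\N_{s'}(\beta_{\sigma(i)})$ and $\N_{s'}(\beta_i')=\mu_i$. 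Dividing the relations for $i\neq j$ removes the Frobenius twist: $AP^r_{\lambda_i}P^{-r}_{\lambda_j}A^{-1}$ is a nonzero scalar multiple of $P_{\mu_i}P^{-1}_{\mu_j}$. The first matrix is conjugate to a diagonal matrix with the two distinct eigenvalues $1$ and $\lambda_j/\lambda_i$, of multiplicities $m-r$ and $r$. The second has multiplicities $m-1$ and $1$. Therefore $r\equiv\pm1$.

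For $k=m-1$, a direct computation gives the same conclusion. It uses the fact that every map $G_{m-1,s,m}\to G_{m-1,s',m}$ comes from $x\mapsto cx^{q^{s'-s}}$. Consequently, whenever $\varphi(m)\ge 4$ (e.g.\ $m=5$), the count for $k\in\{m-1,m\}$ is $\frac{\varphi(m)}{2}\cdot 112720$, not $112720$.

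Your argument, like the paper's, therefore proves the statement only for $1<k\le m-2$, and for $k\in\{m-1,m\}$ only when $\varphi(m)=2$. Even for $k\le m-2$ you should add two ingredients to get exactly $\frac{\varphi(m)}{2}$ copies of the orbit set: the theorem identifying the $-s$ family with the $s$ family, and surjectivity of $\N_s$ onto $\F_{25}^*$.
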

\begin{proof}
Consider $q=25$ and $t=12$. We study the action of $\F_{25}^*$ on 
$\binom{\F_{25}^*}{12}$. Since $\F_{25}^*$ is cyclic of order $24$, we may 
identify it with the cyclic group $\cyclic_{24}$. As $\gcd(24,12)=12$, only the 
subgroups whose order divides $12$ may occur as stabilizers. The relevant 
portion of the subgroup lattice of $\cyclic_{24}$ is illustrated in Figure~\ref{fig:lattice}.

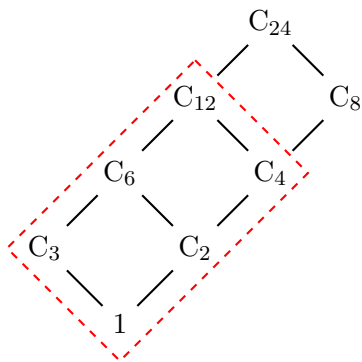
\begin{figure}[h]
\centering
\begin{tikzpicture}

\node (24) at (0,0) {C$_{24}$};
\node (12) at (-1,-1) {C$_{12}$};
\node (8) at (1,-1) {C$_8$};
\node (6) at (-2,-2) {C$_{6}$};
\node (4) at (0,-2) {C$_4$};
\node (3) at (-3,-3) {C$_{3}$};
\node (2) at (-1,-3) {C$_2$};
\node (1) at (-2,-4) {$1$};
\draw[black, thick] (24) to (12);
\draw[black, thick] (24) to (8);
\draw[black, thick] (12) to (4);
\draw[black, thick] (12) to (6);
\draw[black, thick] (8) to (4);
\draw[black, thick] (6) to (3);
\draw[black, thick] (6) to (2);
\draw[black, thick] (4) to (2);
\draw[black, thick] (3) to (1);
\draw[black, thick] (2) to (1);
\draw[red,thick, dashed](-1,-0.5)--(0.5,-2)--(-2,-4.5)--(-3.5,-3)--(-1,-0.5);
\end{tikzpicture}
\caption{Subgroup lattice of the cyclic group $C_{24}$. The highlighted portion corresponds to subgroups whose order divides $\gcd(24,12)=12$.}
\label{fig:lattice}
\end{figure}

Let 
\[
D=\{1,2,3,4,6,12\}
\]
be the set of divisors of $12$. For each $d\in D$, we compute the number 
\[
f_d=\big|\{A\in\binom{\F_{25}^*}{12}:\stab(A)=\cyclic_d\}\big|
\]
and the corresponding number of orbits 
\[
O_d=\frac{f_d}{24/d}.
\]

We begin with the largest stabilizer.\\

\textbf{Stabilizer $\cyclic_{12}$.}\\
Since $12=\gcd(24,12)$, no larger subgroup must be excluded. Hence
\[
f_{12}=\binom{24/12}{12/12}=\binom{2}{1}=2.
\]
Each orbit has size $24/12=2$, so
\[
O_{12}=\frac{f_{12}}{2}=1.
\]

\textbf{Stabilizer $\cyclic_{6}$.}\\
Here we must subtract the sets already counted for $\cyclic_{12}$:
\[
f_{6}=\binom{24/6}{12/6}-f_{12}
      =\binom{4}{2}-2
      =4.
\]
Since each orbit has size $24/6=4$,
\[
O_6=\frac{f_6}{4}=1.
\]

\textbf{Stabilizer $\cyclic_{4}$.}\\
Again removing the sets with stabilizer $\cyclic_{12}$ gives
\[
f_4=\binom{24/4}{12/4}-f_{12}
    =\binom{6}{3}-2
    =18.
\]
The orbit size is $24/4=6$, hence
\[
O_4=\frac{f_4}{6}=3.
\]

\textbf{Stabilizer $\cyclic_{3}$.}\\
Since $\cyclic_3$ is contained in both $\cyclic_6$ and $\cyclic_{12}$,
we subtract the corresponding contributions:
\[
f_3=\binom{24/3}{12/3}-f_6-f_{12}
    =\binom{8}{4}-4-2
    =64.
\]
The orbit size is $24/3=8$, yielding
\[
O_3=\frac{f_3}{8}=8.
\]

\textbf{Stabilizer $\cyclic_{2}$.}\\
Similarly,
\[
f_2=\binom{24/2}{12/2}-f_4-f_6-f_{12}
    =\binom{12}{6}-18-4-2
    =900,
\]
and therefore
\[
O_2=\frac{f_2}{12}=75.
\]

\textbf{Trivial stabilizer.}\\
Finally,
\[
f_1=\binom{24}{12}-f_2-f_3-f_4-f_6-f_{12}
    =\binom{24}{12}-900-64-18-4-2
    =2703168,
\]
so
\[
O_1=\frac{f_1}{24}=112632.
\]

Summing all orbit counts, we obtain
\[
\#\text{Orbits}
=O_{12}+O_6+O_4+O_3+O_2+O_1
=112720.
\]
\end{proof}

Finally, we conclude with an example illustrating how to count the number of inequivalent Linearized Reed–Solomon codes and determine the corresponding equivalence classes for smaller parameter choices.

\begin{proposition}
The number of inequivalent classes of 
linearized Reed-Solomon codes with parameters $[\mathbf{m},k]_{7^m/7}$, with $1<k\leq m$, $\mathbf{m}=(m,m,m)$,
equals $4$ if $k\in\{m-1,m\}$ otherwise is $2\varphi(m)$. Identifying $\F_7$ with $\Z_7$, the equivalence classes of codes are represented by $LR_k^{q^s}[(\alpha_1,\alpha_2,\alpha_3)]$ with $\{ \alpha_1,\alpha_2,\alpha_3\}$ are as in Table \ref{table:finexample}.
\begin{table}[h]
\centering
\begin{tabular}{|c|c|}
\hline
$\{\alpha_1,\alpha_2,\alpha_3\}$ & $O(\{\alpha_1,\alpha_2,\alpha_3\})$ \\
\hline
$\{1,2,4\}$ & $\{\{1,2,4\},\{3,5,6\}\}$ \\
\hline
$\{1,5,6\}$ & $\{\{1,5,6\},\{3,4,6\},\{2,4,5\},\{1,2,6\},\{2,3,5\},\{1,3,4\}\}$ \\
$\{1,3,5\}$ & $\{\{1,3,5\},\{2,3,6\},\{1,4,5\},\{1,2,3\},\{2,4,6\},\{4,5,6\}\}$ \\
$\{2,5,6\}$ & $\{\{2,5,6\},\{1,4,6\},\{2,3,4\},\{1,3,6\},\{1,2,5\},\{3,4,5\}\}$ \\
\hline
\end{tabular}
\caption{Orbits of the action of $\Z_7^*$ on $3$-subsets $\Z_7^*$.}
\label{table:finexample}
\end{table}
\end{proposition}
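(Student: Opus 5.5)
The plan is to apply the closed formula of the previous subsection with $q=7$ and $t=3$, and then to make the orbits explicit. By Theorems~\ref{th:eqsets} and~\ref{th:eqsets=}, for a fixed $s$ the equivalence classes of these codes correspond bijectively to the orbits of the action $\Phi_\star$ of $\F_7^*$ on $\binom{\F_7^*}{3}$. Here each code is identified with its set of norms $\{\N_s(\alpha_1),\N_s(\alpha_2),\N_s(\alpha_3)\}$. Every $3$-subset of $\F_7^*$ actually occurs, because the norm $\N_s\colon\F_{7^m}^*\to\F_7^*$ is surjective: for any prescribed $\lambda_i$ we can choose $\alpha_i$ with $\N_s(\alpha_i)=\lambda_i$. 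Accordingly, I will read the entries of Table~\ref{table:finexample} as the norm sets defining the codes, choosing suitable $\alpha_i$ as just described. The factor coming from $s$ is then handled as in Theorem~\ref{thm:countingmtcoprime}: by Proposition~\ref{prop:s<m/2} it equals $\Psi(k,m)$. This gives $\varphi(m)/2$ classes of values of $s$ when $k\le m-2$, and a single one when $k\in\{m-1,m\}$.

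Next I will count the orbits. Since $\gcd(q-1,t)=\gcd(6,3)=3$, Lemma~\ref{lem:char} shows that only $\cyclic_1$ and $\cyclic_3$ can occur as stabilizers.
\begin{itemize}
\item Stabilizer $\cyclic_3$: by Lemma~\ref{lem:char} these sets are exactly the cosets of $\cyclic_3$, so $f_3=\binom{6/3}{3/3}=2$. They form $O_3=f_3/(6/3)=1$ orbit.
\item Trivial stabilizer: $f_1=\binom{6}{3}-f_3=20-2=18$, giving $O_1=18/6=3$ orbits.
\end{itemize}
Hence there are $4$ orbits in total. Multiplying by $\Psi(k,m)$ gives $4$ classes when $k\in\{m-1,m\}$ and $\frac{\varphi(m)}{2}\cdot 4=2\varphi(m)$ classes otherwise.

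To verify the table, I identify $\F_7$ with $\Z_7$. The subgroup $\cyclic_3$ is $\{1,2,4\}$ (the squares), with nontrivial coset $3\cdot\{1,2,4\}=\{3,5,6\}$; this gives the first row. For the other three rows, $3$ generates $\Z_7^*$, so each orbit is obtained by repeatedly multiplying the representative by $3$ modulo $7$. For example, starting from $\{1,5,6\}$ this produces $\{3,1,4\}$, $\{2,3,5\}$, and so on. I will check that each listed orbit has $6$ distinct elements, and that together with the first orbit the four orbits partition all $20$ subsets.

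The main obstacle is just careful bookkeeping: computing the three orbits of size $6$ correctly and confirming they are disjoint.
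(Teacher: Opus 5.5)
Your proposal is correct and follows essentially the same route as the paper. Lemma~\ref{lem:char} isolates the single orbit $\{\{1,2,4\},\{3,5,6\}\}$ with stabilizer $\cyclic_3$. The remaining $18$ subsets split into three orbits of size $6$, which you correctly generate by repeated multiplication by the primitive element $3$. The factor $\Psi(k,m)$ is inherited from the general counting theorem, just as the paper does implicitly.

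You also make explicit two points the paper leaves implicit: the norm $\F_{7^m}^*\to\F_7^*$ is surjective, and the table entries must be read as norm sets. Read literally as $\alpha_i\in\F_7$, the norms $\alpha_i^m$ need not even be pairwise distinct.
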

\begin{proof}
 Consider the action of $\Z_7^*$ on $\binom{\Z_7^*}{3}$ through $\Phi_\star$. The structure of $\Z_7^*$ is described in Figure \ref{fig:2}.

\begin{figure}[H]
\centering
\begin{tikzpicture}
\node (6) at (-2,-2) {C$_{6}$};
\node (3) at (-3,-3) {C$_{3}$};
\node (2) at (-1,-3) {C$_2$};
\node (1) at (-2,-4) {$1$};

\draw[black, thick] (6) to (3);
\draw[black, thick] (6) to (2);
\draw[black, thick] (3) to (1);
\draw[black, thick] (2) to (1);
\draw[red,thick, dashed](-3,-2.5)--(-1.5,-4)--(-2,-4.5)--(-3.5,-3)--(-3,-2.5);
\end{tikzpicture}
\caption{Subgroup lattice of the cyclic group $C_{6}$. The highlighted portion corresponds to subgroups whose order divides $\gcd(6,3)=3$.}
\label{fig:2}
\end{figure}
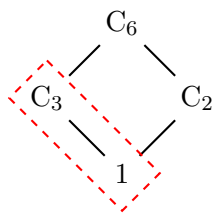

\textbf{Stabilizer $\cyclic_{3}$.}\\
Since we consider $\Z_7^*$ as the acting group, we can write $\cyclic_3=\langle2\rangle=\{1,2,4\}$. The orbits cardinalities can be $2$ or $6$. Firstly, we count the sets whose orbit has size $2$, the stabilizer of these elements is the subgroup $\cyclic_3$. By Lemma \ref{lem:char} these elements can be written as unions of cosets, in this case they are exactly the elements of $\cyclic_6/\cyclic_3=\{\cyclic_3,3\cyclic_3\}=\{\{1,2,4\},\{3,5,6\}\}$. Thus, we can write
\begin{equation*}
    O(\{1,2,4\})=\{\{1,2,4\},\{3,5,6\}\},
\end{equation*}
and this is the only orbit of cardinality $2$.

\textbf{Trivial stabilizer.}\\
In order to count the orbits of cardinality $6$, using the same process we consider $\cyclic_6/\{1\}=\{1,2,3,4,5,6\}$ and group these elements into sets of cardinality $3$ taking into account that we already considered the sets $\{1,2,4\}$ and $\{3,5,6\}$. Moreover, since each of these orbits has cardinality $6$ and we are left with $18$ elements, we know that we should obtain $3$ orbits. We start from $\{1,5,6\}$ and multiplying by every element of $\Z_7^*$ we obtain its orbit, that is
\begin{equation*}
O(\{ 1, 5, 6 \})=\{\{ 1, 5, 6 \},\{ 3, 4, 6 \},\{ 2, 4, 5 \},\{ 1, 2, 6 \},\{ 2, 3, 5 \},\{ 1, 3, 4 \}\}.    
\end{equation*}
Then we move to another element not already taken, for example, we can consider $\{ 1, 3, 5 \}$ and thus obtain the orbit 
\begin{equation*}
O(\{ 1, 3, 5 \})=\{\{ 1, 3, 5 \},\{ 2, 3, 6 \},\{ 1, 4, 5 \},\{ 1, 2, 3 \},\{ 2, 4, 6 \},\{ 4, 5, 6 \}\}.    
\end{equation*}
Finally, we are left with $6$ elements that belong to the same orbit, namely 

\begin{equation*}
    O(\{ 2, 5, 6 \})=\{\{ 2, 5, 6 \},\{ 1, 4, 6 \},\{ 2, 3, 4 \},\{ 1, 3, 6 \},\{ 1, 2, 5 \},\{ 3, 4, 5 \}\}.
\end{equation*}

We have divided into $4$ orbits a total of $20$ elements, since $|\binom{\Z_7^*}{3}|=20$, this ends the proof.
\end{proof}

\section{Conclusions}\label{sec:7}

In this paper, we investigated the equivalence problem for linearized Reed-Solomon codes and determined the number of inequivalent codes within this family. 
Using the geometric interpretation of sum-rank metric codes via systems of $\F_q$-subspaces, we analyzed the stabilizer of the Gabidulin system and obtained a characterization of equivalence for linearized Reed-Solomon codes in terms of the norms of their defining parameters. 
This description allowed us to reduce the classification problem to the action of $\F_q^\ast$ on subsets of $\F_q^\ast$, leading to explicit formulas for the number of inequivalent linearized Reed-Solomon codes. 
In this way, our results in the square case extend the work of Schmidt and Zhou \cite{schmidt2018number}, who determined the number of inequivalent Gabidulin codes.

Several questions remain open. 
First, throughout this paper, we focused on the square case, that is, when all block lengths are equal to $m$. 
A natural direction for further research is to study the equivalence problem for linearized Reed-Solomon codes in the more general setting where the block lengths $n_1,\ldots,n_t$ are not necessarily equal to $m$. 
Such a generalization would further extend the results of Schmidt and Zhou and could reveal a richer structure of equivalence classes in the sum-rank setting.

Another challenging problem concerns linearized Reed-Solomon codes of dimension greater than $m$. 
In the regime $k \le m$ the connection with Gabidulin systems plays a crucial role in our analysis, allowing us to exploit known structural results. 
However, when $k > m$ this connection becomes much weaker, and the techniques developed here no longer apply directly. 
Understanding the equivalence classes of linearized Reed-Solomon codes in this setting, therefore, appears to require new ideas.

\section*{Acknowledgments}
Ferdinando Zullo is very grateful for the hospitality of the Department of Mathematics and Data Science, Vrije Universiteit Brussel, Brussel, Belgium, where he spent two weeks as a visiting researcher.

The research of Jonathan Mannaert is supported by VUB OZR mandate ``An algebraic approach to Boolean functions with a geometric domain: the everlasting friendship between algebra and combinatorics’’ (OZR4414).

The research of Marta Messia is supported by FWO research project ``Finite Geometry Applications to Coding Theory and Cryptography’’ (FWO grant nr. G0AJI25N).

The research of Ferdinando Zullo was partially supported by the Italian National Group for Algebraic and Geometric Structures and their Applications (GNSAGA - INdAM).
This work was partially written during the Opera 2026 conference in Bordeaux, and so we acknowledge the support from the International Research Laboratory LYSM in partnership between CNRS and INdAM.

\bibliographystyle{abbrv}
\bibliography{biblio}
\end{document}